\documentclass[11pt]{amsart}
\usepackage[utf8]{inputenc}

\usepackage{maths}
\usepackage[normalem]{ulem}
\usepackage{bm}
\usepackage{graphicx}
\usepackage{subcaption}
\usepackage{verbatim}
\usepackage[normalem]{ulem}
\usepackage{overpic} 
\usepackage{graphicx}
\usepackage{subcaption} 
\usepackage{booktabs}
\usepackage{siunitx}
\usepackage{xcolor}
\definecolor{darkgreen}{RGB}{0,100,0}
\definecolor{darkorange}{RGB}{200,100,0}
\definecolor{darkred}{RGB}{170,0,0}
\usepackage{pifont}
\usepackage{tikz}

\usepackage{fontawesome5}

\newcommand{\cmark}{\textcolor{darkgreen}{\ding{51}}}
\newcommand{\cmarkm}{\textcolor{darkorange}{(\ding{51})}}
\newcommand{\xmark}{\textcolor{darkred}{\ding{55}}}

\allowdisplaybreaks

\calclayout

\usepackage[normalem]{ulem}

\makeatletter

\let\oldmaketitle\maketitle

\renewcommand{\maketitle}{%
  \oldmaketitle
  \vspace{-2cm}
  \begingroup
    \parindent=0pt
    \begin{center}
     \@setaddresses
    \end{center}
  \endgroup
  \global\let\@setaddresses\relax
}

\makeatother

\title[Bregman Proximal Gradient Methods for KL Regression]{On The Linear Convergence of Bregman Proximal Gradient Methods with Applications to Kullback--Leibler regression}

\author[J.\ Chirinos-Rodriguez]{Jonathan\ Chirinos-Rodríguez${}^{1,*}$}
\thanks{${}^*$Corresponding author.}

\author[C.\ Daniele]{Christian\ Daniele${}^{2}$}
\author[C.\ Févotte]{Cédric\ Févotte${}^{1}$}
\author[E.\ Soubies]{Emmanuel\ Soubies${}^{1}$}

\address{${}^1$ University of Toulouse, Toulouse INP, CNRS, IRIT, France}
\email{\{jonathan-eduardo.chirinos-rodriguez, emmanuel.soubies, cedric.fevotte\}@irit.fr}

\address{${}^2$ MaLGa Center, DIBRIS, University of Genoa, Italy \newline 
Laboratoire J. A. Dieudonn\'e,
Universit\'e C\^ote d’Azur, France}
\email{christian.daniele@edu.unige.it}

\begin{document}
\sloppy
\maketitle

\begin{abstract}
Bregman Proximal Gradient methods (BPGM) exploit the underlying geometry of the objective function through a carefully chosen mirror map. In this work, we introduce a novel notion of strong convexity, termed Restricted Relative Strong Convexity, and establish linear convergence rates for BPGM under this condition. We then exploit the proposed theoretical framework to provide an in-depth analysis of the convergence of BPGM for (regularized) Kullback--Leibler regression problems, covering scenarios with both unique and non-unique minimizers, as well as regularized and unregularized formulations. Specifically, we demonstrate that using the popular Burg's entropy as a distance-generating function may only yield linear convergence for certain KL regression problems. In contrast, we show that employing a smoothed version of the Burg's entropy induces the suitable geometry required to guarantee linear convergence. We conclude with numerical experiments that nicely align with our theoretical findings.
\end{abstract}

\section{Introduction}
Many problems in fields such as machine learning, computational imaging, signal processing, and variational inverse problems can be formulated as the minimization of a convex composite functional. Specifically, these are problems of the form
\begin{equation}\label{eq:comp_obj}
    \argmin_{\bx \in \cC} J(\bx):=F(\bx) + R(\bx),
\end{equation}
where $F\colon\R^N\to(-\infty,\infty]$ is proper, convex and differentiable, and $R\colon\R^N\to(-\infty,\infty]$ is proper, convex and lower semicontinuous, but not necessarily differentiable. Moreover, $\cC\subseteq \mathrm{int}(\dom J)$ denotes a constraint set, which we assume to be closed and convex. Finally, we assume that the solution set, denoted by $\mathcal{S}$, is nonempty, and we denote $\bar J:=\inf_{\bx\in\cC}J(\bx)>-\infty$ the optimal value for Problem~\cref{eq:comp_obj}. \\

\paragraph{\textbf{Proximal Gradient Method (PGM)}} 
In order to find solutions of~\cref{eq:comp_obj}, a method of choice is the so-called PGM, also known as forward-backward splitting algorithm~\cite{CW2005,lions1979,Passty1979}. Given $\bx^0 \in \cC$, its iterates write: 
\begin{equation}\label{eq:PGM}
\bx^{k+1} = \argmin_{\bx \in \cC} F(\bx^k) + \left<\nabla F(\bx^{k}), \bx - \bx^{k} \right> + \frac1\tau \|\bx-\bx^k\|_2^2 + R(\bx),
\tag{PGM}
\end{equation}
where $\tau>0$ denotes a constant step-size. The convergence of~\cref{eq:PGM} is classically ensured by assuming Lipschitz smoothness of the gradient (or simply $L$-smoothness) of $F$. 
Under this assumption, and for suitable choices of the step-size $\tau>0$, sublinear convergence rates---of the objective function values and sometimes of the iterates---, i.e., of order $\mathcal{O}(1/k)$ are well-established~\cite{Nesterov2013}, with further accelerated versions providing rates of order $\mathcal{O}(1/k^2)$ studied in~\cite{Beck2009,Nesterov1983}.

It is also well-known that if the differentiable term $F$ is, in addition, strongly convex, then one can establish a linear rate of convergence, i.e., of the form $\mathcal{O}(\rho^k)$ for some $\rho\in(0,1)$, see e.g.~\cite{Bredies_2008,Nesterov2013}. We recall that $F$ is \emph{strongly convex} if there exists $\mu>0$ such that, for all $\bx, \bx' \in \mathrm{int}(\dom F)$
\begin{equation}\label{eq:strong_conv_intro}
\langle \nabla F(\bx)-\nabla F(\bx'), \bx-\bx'\rangle\geq\mu\|\bx-\bx'\|_2^2.
\end{equation}
 While $L$-smoothness is satisfied by a broad class of functions (possibly with a large, i.e., loose, constant $L$), strong convexity is much more restrictive~\cite{Li2017}. To illustrate this, consider the least-squares regression problem, where the smooth term equals $F=(1/2)\|\bA\cdot-\by\|_2^2$ for some linear map $\bA\in\R^{M\times N}$.
In this case, it is easy to see that $F$ is always $L$-smooth but, however, it is strongly convex only if $\bA$ is full column rank, a condition that strictly excludes many practical situations such as common underdetermined inverse problems. Beyond the least-squares setting, strong convexity fails for any function of the form $F = G(\bA \cdot)$ with $G\colon \mathbb{R}^M \to \mathbb{R}_{\geq 0}$ and $\bA$  not full column rank. Furthermore, even when $\bA$ has full column rank, strong convexity still fails for such $F$ when $G$ itself is not strongly convex on $\mathrm{Im}(\bA)$. This is typically the case, for instance, for Poisson regression~\cite{harmany2011spiral}, where $G$ coincides with the generalized Kullback-Leibler divergence.

However, in practice, linear convergence may still be observed in some cases where strong convexity fails to hold. This has led numerous studies to identify weaker sufficient conditions that ensure linear convergence of~\eqref{eq:PGM}~\cite{Attouch2013,beck2017,Drus2018,Luo1993,Neco2016,necoara2019linear,zhang2015,Zhou2017}. In particular, and for further purposes, we focus on two specific properties. The first one is the so-called Kurdyka--{\L}ojasiewicz, introduced in~\cite{Kurdyka1998,loj1963} (see also~\cite{Bolte2007}), and
linear convergence under this condition was shown in~\cite{Attouch2013,Peypo2017,Li2017} functions.
 A detailed comparison between the Kurdyka--{\L}ojasiewicz property and related conditions can be found in~\cite{karimi2016}. The second one is introduced by the authors in~\cite{Lai2013} (see also~\cite{liu2014,Gha2014,zhang2015}), who observed that \emph{global} strong convexity is not strictly necessary to prove linear convergence of~\eqref{eq:PGM}. Instead, it suffices to require strong convexity of $F$ only between a point and its projection onto the solution set $\mathcal{S}$. As defined in~\cite[Lemma 7]{Lai2013}, $F$ is said to be \emph{restricted strongly convex} (RSC) (or to have \emph{quadratic gradient growth}, see \cite[Definition 3]{necoara2019linear}) if there exists $\mu >0$ such that, for all $\bx\in\mathrm{int}(\dom F)$ 
\begin{equation}\label{eq:RSC}
\tag{RSC}
\langle \nabla F(\bx)-\nabla F(\bar\bx_{\ell_2}), \bx-\bar\bx_{\ell_2}\rangle\geq\mu\|\bx-\bar\bx_{\ell_2}\|_2^2,
\end{equation}
where $\bar\bx_{\ell_2}$ denotes the Euclidean projection of $\bx$ onto $\mathcal{S}$. In particular, if $\cS=\{\bar\bx\}$, then $\bar\bx_{\ell_2}=\bar\bx$. Actually, this condition enables the proof of linear convergence rates for several first order algorithms~\cite{Lai2013,necoara2019linear,Gha2014,zhang2015}. We particularly highlight the work of Necoara et al.~\cite{necoara2019linear}, which provides a thorough hierarchichal study between conditions depending on the solution set $\cS$ such as RSC, \emph{quasi-strong convexity} or \emph{quadratic functional growth}.\\

\paragraph{\textbf{Bregman Proximal Gradient Method (BPGM)}}
Recently, BPGM~\cite{bauschke2017descent} (see also~\cite{Bolte2018,lu2018relatively,Teboulle2018,Zhou2019})---or simply Bregman Gradient Method (BGM) when $R \equiv 0$~\cite{AusTeb2006,Tseng2010}---has been introduced. Then, BPGM is given by the following iteration: given $\bx^0\in \cC \cap \mathrm{int}(\dom\varphi)$,
\begin{equation}\label{eq:BPGM}
\tag{BPGM}
\bx^{k+1} = \argmin_{\bx \in \cC} F(\bx^k) + \left<\nabla F(\bx^{k}), \bx - \bx^{k} \right> + \frac1\tau D_{\varphi}(\bx,\bx^k) + R(\bx),
\end{equation}
where $D_\varphi$ denotes the  Bregman divergence relative to $\varphi$. If $\varphi=(1/2)\|\cdot\|_2^2$, then standard PGM is recovered. Otherwise, the choice of the generating function $\varphi$ allows the algorithm to better adapt to the geometry of the cost function $J$. For instance, the gradient of the smooth term $F$ may not satisfy the standard Lipschitz continuity assumption, but rather the so-called \emph{Lipschitz like/Convexity condition} (or simply LC condition)~\cite{bauschke2017descent} with respect to the reference function $\varphi$ (see also~\cite{Bolte2018}, where such condition is termed \emph{L-smad property}, or~\cite{lu2018relatively}, where $F$ is said to be \emph{$L$-Lipschitz relative to $\varphi$}). We recall that $F$ satisfies the LC condition if there exists $L>0$ such that the function $LF-\varphi$ is convex or, equivalently, if 
\begin{equation}\label{eq:rel_Lip}
F(\bx)\leq F(\bx')+\langle\nabla F(\bx'),\bx-\bx' \rangle +L D_\varphi(\bx,\bx'),
\end{equation}
for all $\bx,\bx'\in\mathrm{int}(\dom F)\cap\mathrm{int}(\dom \varphi)$. The above condition clearly motivates the expression of~\cref{eq:BPGM}. Notable examples that satisfy the LC condition~\cref{eq:rel_Lip} and, however, do not have Lipschitz continuous gradient, can be found in~\cite[Section 2]{bauschke2017descent} and~\cite[Section 2]{lu2018relatively}. Under this LC condition, sublinear convergence rates for~\eqref{eq:BPGM} were provided in~\cite{bauschke2017descent} when $\tau < 1/L$. Moreover,  linear rates were established in~\cite{lu2018relatively,Teboulle2018} under a generalization of (global) strong convexity~\cref{eq:strong_conv_intro} to the Bregman setting. Specifically, we say that $F$ is \emph{strongly convex relative to $\varphi$} (see~\cite[Definition 4.1]{Bartlett2007}) if there exists $\mu>0$ such that
\begin{equation}\label{eq:breg_rsc}
\langle \nabla F(\bx)-\nabla F(\bx'), \bx-\bx'\rangle\geq\mu\langle \nabla \varphi(\bx)-\nabla \varphi(\bx'), \bx-\bx'\rangle,
\end{equation}
for all $\bx,\bx'\in\mathrm{int}(\dom F)\cap\mathrm{int}(\dom \varphi)$. Again, when $\varphi=(1/2)\|\cdot\|_2^2$ classical strong convexity is recovered. While the possibility to adapt $\varphi$ makes this condition less restrictive than strong convexity and thus extends the class of functions for which linear convergence can be proven (see~\cite[Section 2]{lu2018relatively}), it remains a rather restrictive condition in practice. Coming back to the case  $F = G(\bA \cdot)$, 
one can show
that if $\bA$ is not full column rank then $\mu = 0$ for any choice of $\varphi$. This leads to a natural question: is it possible to extend the aforementioned weaker alternatives to strong convexity (namely Kurdyka--{\L}ojasiewicz \ or~\cref{eq:RSC}) to the Bregman setting and prove linear convergence rates for~\cref{eq:BPGM}? While such an extension has been proposed for {\L}ojasiewicz-type conditions by the authors in~\cite{Bauschke2019} (allowing for the analysis of nonconvex functions too), a generalization of~\cref{eq:RSC} to the Bregman setting, along with an associated linear convergence proof for~\cref{eq:BPGM}, remains unexplored.\\

\paragraph{\textbf{Contributions and outline}}
First, we establish in~\Cref{sec:BPGM} sufficient conditions ensuring linear convergence rates for~\cref{eq:BPGM}, as well as for an automatic step-size selection rule via backtracking. Our result is built upon a novel property, termed \emph{Restricted Relative Strong Convexity}, that extends the Restricted Strong Convexity property~\cref{eq:RSC} to the Bregman setting. Next, in~\Cref{sec:KL} we exploit this theoretical framework to analyze the convergence of~\cref{eq:BPGM} for (regularized) Kullback--Leibler (KL) problems. Specifically, we specify in~\Cref{sec:KL_theory} suitable validity regions for linear convergence across three different distance generating functions: the squared $\ell_2$-norm, Burg's entropy, and our proposed smoothed Burg's entropy. Finally,~\Cref{sec:experiments} corroborates our theoretical findings with numerical experiments on synthetic data, including a comparison of~\cref{eq:BPGM} under the three different choices of $\varphi$ against the well-known Richardson--Lucy algorithm.\\

\paragraph{\textbf{Notation and definitions.}} 
In the following, $\R$ and $\R_{\geq 0}$ will denote the sets of real and non-negative real numbers, respectively. Given a subset $\mathcal{B}$ of $\R^N$, we denote by $\mathrm{int}(\cB)$ its interior, by $\partial\cB$ its boundary, by $\overline{\cB}$ its closure, and by $\cB^c$ its complementary; i.e., $\cB^c=\R^N\setminus\cB$. Moreover, given $\bx=(x_1,\ldots, x_N)\in\R^N$, $\sigma(\bx)$ denotes its support, i.e., $\sigma(\bx)=\{n \mid x_n\neq 0 \}$. For a given set of indices $\omega\subseteq\{1,\ldots, N\}$, we denote by $\#\omega$ its cardinality. Given a matrix $\bA\in\R^{M\times N}$, $\bA_\omega$ defines its restriction to the columns indexed by $\omega$: $\bA_\omega:=(\ba_{\omega[1]},\ldots,\ba_{\omega[\# \omega]})$, where $\ba_{\omega[n]}$, $n=1,\ldots,\# \omega$, denotes the $\omega[n]^{\mathrm{th}}$ column of $\bA$. Finally, we denote by $\angle(\bx,\mathcal{H})\in[0,\pi/2]$ the angle between $\bx$ and its orthorgonal projection onto the hyperplane $\mathcal{H}$.
In the context of convex analysis, we denote by $\Gamma_0(\R^N)$ the set of functions $\varphi\colon \R^N\to(-\infty,\infty]$ that are proper, convex and lower semicontinuous. Next, the \emph{Bregman divergence} relative to $\varphi\in\Gamma_0(\R^N)$ is defined, for all $\bx\in\dom \varphi$, $\bx'\in\mathrm{int}(\dom \varphi)$,~as
$$
D_\varphi(\bx,\bx'):= \varphi(\bx)-\varphi(\bx')-\langle s_\varphi(\bx'),\bx-\bx'\rangle
$$
and $+\infty$ elsewhere, for some $s_\varphi(\bx')\in\partial \varphi(\bx')$,  where $\partial \varphi(\bx)$ denotes the subdifferential of $\varphi$ at $\bx$ (\cite[Definition 16.1]{BCombettes}).
Note that we require  $\bx'$ to belong to $\mathrm{int}(\dom \varphi)$ since in this case the subdifferential  of $\varphi$ is nonempty (see~\cite[Theorem 9.23]{BCombettes}). If $\varphi$ is differentiable at $\bx$, then the subdifferential of $\varphi$ at $\bx$ is a singleton, and coincides with the gradient of $\varphi$ at $\bx$, i.e., $\partial \varphi(\bx)=\{\nabla \varphi(\bx)\}$. If $\varphi$ is not differentiable, the above definition depends on the choice of the specific element (subgradient) $s_\varphi(\bx')$ of the subdifferential $\partial \varphi(\bx')$. When such a choice is not specified, we choose the one with minimal Euclidean norm. 

We next recall the definition of a Legendre function $\varphi\in\Gamma_0(\R^N)$, see, e.g.,~\cite[Chapter 26]{Rock1970}: we say that $\varphi$ is \emph{essentially smooth} if it is differentiable on $\mathrm{int}(\dom \varphi)$ and $\lim_{n\to+\infty}\|\nabla \varphi(\bx_n)\|=+\infty$ for every sequence $(\bx_n)_{n\in\mathbb{N}}$ converging to a boundary point of $\mathrm{int}(\dom \varphi)$. Then, we say that $\varphi$ is a \emph{Legendre function} if 1) $\mathrm{int}(\dom \varphi)\neq\emptyset$, 2) it is essentially smooth, and 3) it is strictly convex on $\mathrm{int}(\dom \varphi)$.

Finally, given $\varphi\in\Gamma_0(\R^N)$ a Legendre function such that $\cS \cap \mathrm{int}(\dom(\varphi)) \neq \emptyset$, we define the \emph{Bregman projection} of $\bx\in \mathrm{int}(\dom(\varphi))$  onto $\cS$ by
\begin{equation}\label{eq:bregproj}
    \bar\bx_\varphi := \argmin_{\bz \in \mathcal{S} \cap \dom \varphi} D_\varphi(\bz,\bx).
\end{equation}
Moreover, since we assumed that $\varphi$ is a Legendre function, $\bar\bx_\varphi$ exists, is unique, and belongs to $\cS \cap \mathrm{int}(\dom(\varphi))$ see~\cite[Theorem 3.12]{Bauschke1997}. As a notable example, we denote by $\bar\bx_{\ell_2}$ the Euclidean projection of $\bx$ onto $\mathcal{S}$, namely $\bar\bx_\varphi$ with $\varphi=(1/2)\|\cdot\|^2_2$. Finally, if $\mathcal{S}=\{\bar\bx\}$, then indeed $\bar\bx_\varphi=\bar\bx$. We are now ready to present the main section of this work.

\section{The Bregman Proximal Gradient Method}\label{sec:BPGM}
As already mentioned, we are interested in this work in analyzing the convergence of Bregman (Proximal) Gradient Methods. In particular,  we aim at deriving novel sufficient conditions that ensure linear convergence rates and allow to cover a larger class of problems with respect to existing literature. To that end, we will first introduce some concepts that are specific to this work. First, and unlike~\cite{lu2018relatively,Teboulle2018}, which assume that $F$ is globally strongly convex relative to $\varphi$, our analysis relies only on the following less stringent notion, which extends the restricted strong convexity property~\cite{Lai2013,necoara2019linear,zhang2015} to our Bregman setting. 

\begin{definition}[Restricted Relative Strong Convexity]\label{def:RRSC} 
We say that $F$ is \emph{restricted strongly convex relative to $\varphi$} if there exists a constant $\mu >0$ such that, for all $\bx\in\mathrm{int}(\dom F)\cap\mathrm{int}(\dom \varphi)$, we have
\begin{equation}\label{eq:rrsc}
\tag{RRSC}
\langle \nabla F(\bx)-\nabla F(\bar\bx_{\ell_2}), \bx-\bar\bx_{\ell_2}\rangle\geq \mu \langle \nabla \varphi(\bx)-\nabla \varphi(\bar\bx_{\ell_2}), \bx-\bar\bx_{\ell_2}\rangle.
\end{equation}
\end{definition}

Clearly, when $\varphi=(1/2)\|\cdot\|_2^2$, we recover~\cref{eq:RSC}.
Second, we introduce a (restricted) symmetry measure for the Bregman divergence $D_{\varphi}$.

\begin{definition}[Restricted symmetry coefficient]\label{def:alpha(h)} Let $\cD\subseteq\mathrm{int}(\dom \varphi)$. Then, the (restricted) symmetry coefficient $\alpha_\cD(\varphi)$ is given by
\begin{equation}\label{eq:alpha(h)}
\alpha_\cD(\varphi):=\inf \left\{\frac{D_{\varphi}(\bx,\by)}{D_{\varphi}(\by, \bx)} \mid \bx \in \mathrm{int}(\dom \varphi), \by\in \cD, \bx\neq\by \right\}\in[0, 1].
\end{equation}
\end{definition}
If $\cD=\mathrm{int} (\dom \varphi)$ then we simply denote the above as $\alpha(\varphi)$, and we recover the global symmetry coefficient introduced in~\cite{bauschke2017descent}. In this case, we recall that $\alpha(\varphi)=1$ if and only if $D_{\varphi}$ is symmetric, and $D_{\varphi}$ is symmetric if and only if $\varphi$ is quadratic, see e.g.~\cite{bauschke2017descent}. On the other hand, $\alpha(\varphi)=0$ indicates a total absence of symmetry. This is the case, for instance, when $\varphi$ equals the Boltzman--Shannon or the Burg entropies~\cite{bauschke2017descent}.  Additionally, it is easy to observe that, for any $\cD\subset \mathrm{int}(\dom \varphi)$, it holds $\alpha_\cD(\varphi)\geq \alpha(\varphi)$ since, clearly, $\alpha(\varphi)$ is defined on a larger set than $\alpha_\cD(\varphi)$. Hence, our definition is weaker than the one provided in~\cite{bauschke2017descent}. In our results, this coefficient will always appear with $\cD=\mathcal{S}$; i.e., the solution set of Problem~\cref{eq:comp_obj}. In particular, we will require that $\alpha_\mathcal{S}(\varphi)>0$, similarly to the condition $\alpha(\varphi)>0$ in previous works~\cite{Bauschke2019,bauschke2017descent,Bolte2018}. 

We note that the proposed restriction to $\cS$ in the above two definitions will turn to be key for proving linear convergence of~\cref{eq:BPGM} within the Kullback--Leibler regression setting we study in~\Cref{sec:KL}.
We are now ready to present the main assumptions of our work.
\begin{assumption}\label{theor assumptions} We assume that $\varphi\in\Gamma_0(\R^N)$ is a Legendre function for which the following assumptions are satisfied.
\begin{enumerate}

\item\label{ass1} $\cC \subseteq \overline{\dom \varphi}$.
\item\label{ass11} $\cS \cap \mathrm{int}(\dom(\varphi)) \neq \emptyset$.
\item\label{ass12} Given $\bx^k \in \cC \cap \mathrm{int}(\dom \varphi)$, $\bx^{k+1}$ in~\eqref{eq:BPGM} is well defined (i.e., it exists, is unique, and belongs to $ \cC \cap \mathrm{int}(\dom \varphi)$).
\item\label{ass2} $F$ is $L$-Lipschitz relative to $\varphi$ for some $L>0$.
\item\label{ass3} $F$ is $\mu$-restricted strongly convex relative to $\varphi$ for some $\mu>0$.
\item\label{ass4} We have $\alpha_\cS(\varphi)>0$.
\end{enumerate}
\end{assumption}
Let us also recall that we already assumed $\cC \subseteq \mathrm{int}(\dom J) = \mathrm{int}(\dom F) \cap \mathrm{int}(\dom R)$. While the first three assumptions above ensure the well-posedness of both \cref{eq:BPGM} and the different tools considered in this work (cf. the Bregman projection and~\Cref{def:alpha(h)}), the remaining three will serve as the key conditions for ensuring a linear convergence rate.

In order to prove our main result, the following lemma is key, and extends~\cite[Lemma 1]{zhang2015} to our Bregman setting.
\begin{lemma}\label{lem:nonsmooth_rrscconseqs} Under~\Cref{theor assumptions}, we have  for all $\bx \in \cC\cap \mathrm{int}(\dom \varphi)$ that 
\begin{equation}\label{eq:nonsmooth_bregman_gradient_domination}
    J(\bx) \geq \bar J+ \mu \alpha_\mathcal{S}(\varphi)D_{\varphi}(\bar\bx_{\varphi},\bx).
\end{equation}
\end{lemma}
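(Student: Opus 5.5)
The plan is to chain three inequalities through the Euclidean projection $\bar\bx:=\bar\bx_{\ell_2}$ of $\bx$ onto $\cS$. First, a first-order optimality argument absorbs $R$ and the constraint $\cC$. Second, an integrated version of~\cref{eq:rrsc} along the segment $[\bar\bx,\bx]$ yields $J(\bx)-\bar J\geq \mu D_\varphi(\bx,\bar\bx)$. Third, the restricted symmetry coefficient (\Cref{def:alpha(h)}) and the minimality of the Bregman projection~\cref{eq:bregproj} turn this into the stated bound. This follows the route of~\cite[Lemma 1]{zhang2015}, with $\|\cdot\|^2$ replaced by Bregman terms.

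\emph{Step 1 (nonsmooth part).} Since $\bar\bx\in\cS$ minimizes $F+R$ over the closed convex set $\cC$, and $\bar\bx\in\cC\subseteq\mathrm{int}(\dom J)$, the optimality condition gives $0\in\nabla F(\bar\bx)+\partial R(\bar\bx)+N_\cC(\bar\bx)$. We therefore pick $\bg\in\partial R(\bar\bx)$ and $\bn\in N_\cC(\bar\bx)$ with $\bg=-\nabla F(\bar\bx)-\bn$. For $\bx\in\cC$ we have $\langle\bn,\bx-\bar\bx\rangle\leq 0$. Combined with the subgradient inequality for $R$, this gives $R(\bx)\geq R(\bar\bx)-\langle\nabla F(\bar\bx),\bx-\bar\bx\rangle$. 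Hence
\[
J(\bx)-\bar J\geq F(\bx)-F(\bar\bx)-\langle\nabla F(\bar\bx),\bx-\bar\bx\rangle .
\]

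\emph{Step 2 (integrating RRSC).} Set $\bx_t:=\bar\bx+t(\bx-\bar\bx)$ for $t\in(0,1]$. The key geometric fact is that the Euclidean projection of $\bx_t$ onto the convex set $\cS$ is still $\bar\bx$. This holds because the characterization $\langle \bx-\bar\bx,\bz-\bar\bx\rangle\leq 0$ for all $\bz\in\cS$ is invariant under scaling $\bx-\bar\bx$ by $t>0$. Applying~\cref{eq:rrsc} at $\bx_t$ and dividing by $t$ gives
\[
\langle\nabla F(\bx_t)-\nabla F(\bar\bx),\bx-\bar\bx\rangle\geq\mu\langle\nabla\varphi(\bx_t)-\nabla\varphi(\bar\bx),\bx-\bar\bx\rangle .
\]
Integrating over $t\in[0,1]$ and using the fundamental theorem of calculus for $F$ and for $\varphi$ yields
\[
F(\bx)-F(\bar\bx)-\langle\nabla F(\bar\bx),\bx-\bar\bx\rangle\geq\mu D_\varphi(\bx,\bar\bx).
\]
The main obstacle is justifying this step. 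We must check that $\bx_t\in\mathrm{int}(\dom F)\cap\mathrm{int}(\dom\varphi)$ for $t\in(0,1]$, which follows from convexity of interiors since $\bx$ is interior and $\bar\bx$ lies in the closure. We must also make sense of $\nabla\varphi(\bar\bx)$, which \cref{eq:rrsc} implicitly requires. If $\bar\bx$ were on the boundary of $\dom\varphi$, I would work on $[\varepsilon,1]$ and pass to the limit $\varepsilon\to0$, using lower semicontinuity of $\varphi$ and continuity of $F$. A small technical point is that the integrand is only defined on $(0,1]$, so the integral should be read as an improper one.

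\emph{Step 3 (symmetry and projection).} Since $\bar\bx\in\cS$, \Cref{def:alpha(h)} with $\by=\bar\bx$ gives $D_\varphi(\bx,\bar\bx)\geq\alpha_\cS(\varphi)D_\varphi(\bar\bx,\bx)$; the case $\bx=\bar\bx$ is trivial. Finally, because $\bar\bx\in\cS\cap\dom\varphi$ and $\bar\bx_\varphi$ minimizes $D_\varphi(\cdot,\bx)$ over this set, we have $D_\varphi(\bar\bx,\bx)\geq D_\varphi(\bar\bx_\varphi,\bx)$. Chaining Steps 1--3 gives~\cref{eq:nonsmooth_bregman_gradient_domination}.
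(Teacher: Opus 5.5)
Your proof is correct and follows the paper's argument. Both use the segment from $\bar\bx_{\ell_2}$ to $\bx$, the invariance of the Euclidean projection along it to integrate \eqref{eq:rrsc} into $J(\bx)\geq\bar J+\mu D_\varphi(\bx,\bar\bx_{\ell_2})$, and then $\alpha_\cS(\varphi)$ together with the minimality of the Bregman projection. The only difference is how $R$ is removed: you do it once, via the optimality condition and the subgradient inequality at $\bar\bx_{\ell_2}$, whereas the paper integrates subgradients of $J$ along the path and uses monotonicity of $\partial R$. The two give the same estimate after integration, and yours is slightly more elementary.

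Your $\varepsilon$-fallback for a boundary $\bar\bx_{\ell_2}$ is unnecessary, and it would not work anyway, since \eqref{eq:rrsc} itself involves $\nabla\varphi(\bar\bx_{\ell_2})$. The hypotheses already place $\bar\bx_{\ell_2}$ in $\mathrm{int}(\dom\varphi)$, because $\alpha_\cS(\varphi)$ is defined through \Cref{def:alpha(h)}, which requires $\cS\subseteq\mathrm{int}(\dom\varphi)$. The paper's proof uses this tacitly as well.
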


\begin{proof} Let us consider the path $\boldsymbol{\gamma}(t)\colon[0,1]\to\R^N$, $\boldsymbol{\gamma}(t):=\bar\bx_{\ell_2}+t(\bx-\bar\bx_{\ell_2})$ and observe that, by~\cite[Section D, Theorem 2.3.4]{HUL2001},
$$
J(\bx)-J(\bar\bx_{\ell_2})= \int_0^1 \langle  s_J(\boldsymbol{\gamma}(t)),\bx-\bar\bx_{\ell_2}\rangle \ \mathrm{d}t,
$$
for all $s_J(\boldsymbol{\gamma}(t))\in\partial J(\boldsymbol{\gamma}(t))$, and where we recall that $\partial J(\bx)$ denotes the subdifferential of $J$ at $\bx$. Now, by~\cite[Theorem 27.4]{Rock1970} there exists $s_J(\bar\bx_{\ell_2})\in\partial J(\bar\bx_{\ell_2})$ such that $\langle s_J(\bar\bx_{\ell_2}), \bx-\bar\bx_{\ell_2}\rangle\geq 0$ (notice that, as $\cC\subseteq\mathrm{int}(\dom J)$, we directly get that $\mathrm{ri}(\dom J)\cap \mathrm{ri}(\cC)\neq\emptyset$, where $\mathrm{ri}(\cD)$ stands for the relative interior of $\cD$). Then, for all $s_J(\boldsymbol{\gamma}(t))\in\partial J(\boldsymbol{\gamma}(t))$ and all $\bx\in\cC \cap \mathrm{int}(\dom \varphi)$ we have that
$$
\begin{aligned}
J(\bx)&= J(\bar\bx_{\ell_2})+\int_0^1 \langle  s_J(\boldsymbol{\gamma}(t)),\bx-\bar\bx_{\ell_2}\rangle \ \mathrm{d}t\\
&= J(\bar\bx_{\ell_2})+\langle s_J(\bar\bx_{\ell_2}), \bx-\bar\bx_{\ell_2}\rangle+\int_0^1 \frac{1}{t}\langle s_J(\boldsymbol{\gamma}(t))-s_J(\bar\bx_{\ell_2}),t(\bx-\bar\bx_{\ell_2})\rangle \ \mathrm{d}t\\
&= J(\bar\bx_{\ell_2})+\langle s_J(\bar\bx_{\ell_2}), \bx-\bar\bx_{\ell_2}\rangle+\int_0^1 \frac{1}{t}\langle s_J(\boldsymbol{\gamma}(t))-s_J(\bar\bx_{\ell_2}),\boldsymbol{\gamma}(t)-\bar\bx_{\ell_2}\rangle \ \mathrm{d}t.
\end{aligned}
$$
Recalling that $J=F+R$, we get that $s_J(\bar\bx_{\ell_2})=\nabla F(\bar\bx_{\ell_2})+s_R(\bar\bx_{\ell_2})$ for some $s_R(\bar\bx_{\ell_2})\in\partial R(\bar\bx_{\ell_2})$ and $s_J(\boldsymbol{\gamma}(t))=\nabla F(\boldsymbol{\gamma}(t))+s_R(\boldsymbol{\gamma}(t))$ for some $s_R(\boldsymbol{\gamma}(t))\in\partial R(\boldsymbol{\gamma}(t))$. Next, since $\partial R$ is a monotone operator (by convexity of $R$), we have that $\langle s_R(\boldsymbol{\gamma}(t)) -  s_R(\bar\bx_{\ell_2}), \boldsymbol{\gamma}(t) - \bar\bx_{\ell_2}\rangle\geq 0$. Hence, the inner product inside the integral above satisfies
$$
\langle s_J(\boldsymbol{\gamma}(t))-s_J(\bar\bx_{\ell_2}),\boldsymbol{\gamma}(t)-\bar\bx_{\ell_2}\rangle\geq \langle \nabla F(\boldsymbol{\gamma}(t))-\nabla F(\bar\bx_{\ell_2}),\boldsymbol{\gamma}(t)-\bar\bx_{\ell_2}\rangle.
$$
Now, as $F$ is restricted strongly convex relative to $\varphi$ with constant $\mu>0$, and using that $\langle s_J(\bar\bx_{\ell_2}), \bx-\bar\bx_{\ell_2}\rangle\geq 0$, we get that
$$
\begin{aligned}
J(\bx)
&\geq J(\bar\bx_{\ell_2})+\int_0^1 \frac{1}{t} \langle \nabla F(\boldsymbol{\gamma}(t))-\nabla F(\bar\bx_{\ell_2}),\boldsymbol{\gamma}(t)-\bar\bx_{\ell_2}\rangle \ \mathrm{d}t\\
&\geq J(\bar\bx_{\ell_2})+\int_0^1 \frac{1}{t}\mu \langle \nabla \varphi(\boldsymbol{\gamma}(t))-\nabla\varphi(\bar\bx_{\ell_2}),\boldsymbol{\gamma}(t)-\bar\bx_{\ell_2}\rangle \ \mathrm{d}t\\
&=J(\bar\bx_{\ell_2})+\mu\left(\int_0^1 \langle \nabla \varphi(\boldsymbol{\gamma}(t)), \bx-\bar\bx_{\ell_2} \rangle \ \mathrm{d}t - \langle \nabla \varphi(\bar\bx_{\ell_2}), \bx-\bar\bx_{\ell_2}\rangle\right).
\end{aligned}
$$
Note that, in order to obtain the second inequality, we used the fact that for all $t \in [0,1]$ we have 
$\overline{\boldsymbol{\gamma}}_{\ell_2}(t) = \bar{\bx}_{\ell_2}$ 
by the definition of both $\boldsymbol{\gamma}(t)$ and the Euclidean projection. Applying the fundamental theorem of calculus to $\varphi$, together with $J(\bar\bx_{\ell_2})=\bar J$, leads to
\begin{equation}\label{eq:breg_fuc_growth}
J(\bx)\geq \bar J+\mu D_{\varphi}(\bx,\bar\bx_{\ell_2}),
\end{equation}
for all $\bx\in\cC\cap \mathrm{int}(\dom \varphi)$. Next, since $\alpha_\mathcal{S}(\varphi)>0$ by assumption, we have that $\alpha_\mathcal{S}(\varphi)D_{\varphi}(\bar\bx_{\ell_2},\bx)\leq D_{\varphi}(\bx,\bar\bx_{\ell_2})$ and, by definition of $\bar\bx_{\varphi}$, we obtain that $D_{\varphi}(\bar\bx_{\ell_2},\bx)\geq D_{\varphi}(\bar\bx_{\varphi},\bx)$. Plugging both of these facts into~\cref{eq:breg_fuc_growth} finally leads to
$$
J(\bx)\geq \bar J+\mu\alpha_\mathcal{S}(\varphi) D_{\varphi}(\bar\bx_{\ell_2},\bx)\geq \bar J+\mu\alpha_\mathcal{S}(\varphi) D_{\varphi}(\bar\bx_{\varphi},\bx),
$$
for all $\bx\in\cC\cap \mathrm{int}(\dom \varphi)$, as desired.
\end{proof}

The following theorem constitutes the main result of the section, and shows linear convergence of~\cref{eq:BPGM}.
\begin{theorem}[Linear convergence of BPGM]\label{thm:lin_conv_rate}
Let~\Cref{theor assumptions} be satisfied and consider the sequence $(\bx^k )_{k\in\mathbb{N}}$ in $\cC\cap \mathrm{int}(\dom \varphi)$ generated by~\cref{eq:BPGM} with step-size $\tau\in(0,1/L]$. Then, 
\begin{enumerate}[label=(\roman*),leftmargin=*]
\item\label{item1mainthm} The Bregman divergence between $(\bx^k)_{k\in\mathbb{N}}$ and  $\cS$ decreases linearly; i.e.,
\begin{equation}\label{eq:linear_conv_full_rank}
D_{\varphi}(\bar\bx^{k+1}_{\varphi},\bx^{k+1})\leq \left(\frac{1}{1+\alpha_\mathcal{S}(\varphi)\tau\mu}\right)^k D_{\varphi}(\bar\bx^{0}_{\varphi},\bx^{0}),
\end{equation}
where, according to~\cref{eq:bregproj}, $\bar \bx^{k+1}_{\varphi}$ (resp. $\bar \bx_{\varphi}^0$) denotes the Bregman projection of $\bx^{k+1}$ (resp. $\bx^0$) onto $\mathcal{S}$.
\item\label{item2mainthm} The sequence $(J(\bx^k))_{k\in\mathbb{N}}$ converges linearly to $\bar J$; that is,
\begin{equation}\label{eq:linconv func values}
J(\bx^{k+1})-\bar J\leq \frac{1}{\tau}\left(\frac{1}{1+\alpha_\mathcal{S}(\varphi)\tau\mu}\right)^kD_{\varphi}(\bar\bx^0_{\varphi},\bx^0).
\end{equation}
\end{enumerate}
\end{theorem}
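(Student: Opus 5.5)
The plan is to follow the standard descent-lemma-plus-three-point-inequality argument for Bregman proximal gradient methods (as in \cite{bauschke2017descent,lu2018relatively}), and to use \Cref{lem:nonsmooth_rrscconseqs} in place of global relative strong convexity.

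\emph{Step 1: one-step inequality.} Fix $k$ and any $\bz\in\cS\cap\dom\varphi$. The optimality condition of \cref{eq:BPGM} gives a subgradient $s_R(\bx^{k+1})\in\partial R(\bx^{k+1})$ (together with a normal cone element of $\cC$) such that
\[
\nabla F(\bx^k)+s_R(\bx^{k+1})+\tfrac1\tau\bigl(\nabla\varphi(\bx^{k+1})-\nabla\varphi(\bx^k)\bigr)
\]
has nonnegative inner product with $\bz-\bx^{k+1}$. We combine three ingredients:
\begin{itemize}
\item the three-point identity $\langle\nabla\varphi(\bx^{k+1})-\nabla\varphi(\bx^k),\bz-\bx^{k+1}\rangle = D_\varphi(\bz,\bx^k)-D_\varphi(\bz,\bx^{k+1})-D_\varphi(\bx^{k+1},\bx^k)$;
\item convexity of $F$ and $R$ at $\bz$;
\item the LC condition \cref{eq:rel_Lip} with $\tau\le 1/L$, which gives $F(\bx^{k+1})\le F(\bx^k)+\langle\nabla F(\bx^k),\bx^{k+1}-\bx^k\rangle+\tfrac1\tau D_\varphi(\bx^{k+1},\bx^k)$.
\end{itemize}
Together these yield the standard inequality
\[
\tau\bigl(J(\bx^{k+1})-J(\bz)\bigr)\le D_\varphi(\bz,\bx^k)-D_\varphi(\bz,\bx^{k+1}).
\]
Taking $\bz=\bz$ in the solution set gives $J(\bz)=\bar J$. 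Taking $\bz=\bx^k$ instead shows that $J(\bx^k)$ is nonincreasing.

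\emph{Step 2: contraction.} Choose $\bz=\bar\bx^k_\varphi$. On the left, apply \Cref{lem:nonsmooth_rrscconseqs} at $\bx^{k+1}$:
\[
J(\bx^{k+1})-\bar J\ge\mu\alpha_\cS(\varphi)D_\varphi(\bar\bx^{k+1}_\varphi,\bx^{k+1}).
\]
On the right, use the minimality of the Bregman projection, $D_\varphi(\bar\bx^{k+1}_\varphi,\bx^{k+1})\le D_\varphi(\bar\bx^k_\varphi,\bx^{k+1})$. This gives
\[
(1+\tau\mu\alpha_\cS(\varphi))\,D_\varphi(\bar\bx^{k+1}_\varphi,\bx^{k+1})\le D_\varphi(\bar\bx^{k}_\varphi,\bx^{k}).
\]
Iterating yields a bound with exponent $k+1$, which is even stronger than \cref{eq:linear_conv_full_rank} with exponent $k$, so \ref{item1mainthm} follows.

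\emph{Step 3: function values.} For \ref{item2mainthm}, use the Step 1 inequality with $\bz=\bar\bx^k_\varphi$ and drop the nonpositive term:
\[
J(\bx^{k+1})-\bar J\le\tfrac1\tau D_\varphi(\bar\bx^k_\varphi,\bx^k).
\]
Then bound $D_\varphi(\bar\bx^k_\varphi,\bx^k)$ by $(1+\alpha_\cS(\varphi)\tau\mu)^{-k}D_\varphi(\bar\bx^0_\varphi,\bx^0)$ using the recursion from Step 2. This gives \cref{eq:linconv func values} exactly with exponent $k$, which explains the form of the statement.

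\emph{Main obstacle.} I expect the main difficulty to be the rigorous derivation of the Step 1 inequality in this constrained, nonsmooth Bregman setting. This requires:
\begin{itemize}
\item justifying the optimality condition of the subproblem on $\cC$, using \Cref{theor assumptions}\ref{ass12}, $\bx^{k+1}\in\mathrm{int}(\dom\varphi)$, and a qualification condition since $\cC\subseteq\mathrm{int}(\dom J)$;
\item ensuring that $\bar\bx^k_\varphi$ lies in $\cC\cap\mathrm{int}(\dom\varphi)$, so that all divergences are finite and the three-point identity applies.
\end{itemize}
The second point is guaranteed by \Cref{theor assumptions}\ref{ass11} together with the Legendre property, which places the Bregman projection in the interior.
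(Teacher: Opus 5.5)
Your proposal is correct and follows the paper's proof essentially step for step. Both derive the same one-step inequality $D_\varphi(\bz,\bx^{k+1})\le D_\varphi(\bz,\bx^k)+\tau(J(\bz)-J(\bx^{k+1}))$ from the optimality condition, the three-point identity, convexity and the LC condition, then take $\bz=\bar\bx^k_\varphi$ and combine the minimality of the Bregman projection with \Cref{lem:nonsmooth_rrscconseqs}. The only differences are harmless: you keep the normal cone of $\cC$ in the optimality condition, which the paper omits, and for part (ii) you drop $-D_\varphi(\bar\bx^k_\varphi,\bx^{k+1})$ from that inequality instead of bounding $J(\bx^{k+1})$ by the minimized model $\min_{\bx\in\cC}J(\bx)+\tfrac1\tau D_\varphi(\bx,\bx^k)$ as the paper does.
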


\begin{proof}  We start by proving~\cref{item1mainthm}. 
From the optimality condition of the~\eqref{eq:BPGM} iteration we get 
\begin{equation}
    \nabla \varphi(\bx^k) - \nabla \varphi (\bx^{k+1}) - \tau \nabla F(\bx^k) \in \tau \partial R(\bx^{k+1}),
\end{equation}
which, combined with the subgradient inequality for the convex function $R$ leads to, for all $\bx \in \cC\cap \mathrm{int}(\dom \varphi) $,
$$\tau R(\bx) \geq \tau R(\bx^{k+1}) + \langle\nabla \varphi(\bx^k) - \nabla \varphi (\bx^{k+1}) -  \tau \nabla F(\bx^k), \bx - \bx^{k+1} \rangle \notag. $$
Rearranging the terms we get that
\begin{equation}\label{eq:subgrad_ineq}
         \tau( R(\bx^{k+1}) - R(\bx)) \leq \tau \langle \nabla F(\bx^k), \bx - \bx^{k+1} \rangle  + \langle\nabla \varphi(\bx^{k+1}) - \nabla \varphi (\bx^{k}), \bx - \bx^{k+1} \rangle .
\end{equation}
We now introduce the following three (in)equalities:
\begin{itemize}
    \item  \textit{Convexity of $F$}:
    \begin{align*}
        \langle \nabla F(\bx^k), \bx - \bx^{k+1} \rangle &= \langle \nabla F(\bx^k), \bx - \bx^{k} \rangle + \langle \nabla F(\bx^k), \bx^k - \bx^{k+1} \rangle \\
        & \leq F(\bx) - F(\bx^{k})  + \langle \nabla F(\bx^k), \bx^k - \bx^{k+1} \rangle .
    \end{align*}
    \item \textit{Three points identity~\cite{ChenTeboulle1993}}:
    \begin{equation*}
        \langle \nabla \varphi(\bx^{k+1}) - \nabla \varphi(\bx^k), \bx - \bx^{k+1} \rangle = D_\varphi(\bx,\bx^k) - D_\varphi(\bx,\bx^{k+1}) - D_\varphi(\bx^{k+1},\bx^k) .
    \end{equation*}
    \item \textit{Relative $L$-smoothness of $F$~\eqref{eq:rel_Lip}}: for all $\tau \leq 1/L$
    \begin{align}
        & F(\bx^{k+1}) \leq F(\bx^k) + \langle \nabla F(\bx^k), \bx^{k+1} - \bx^k\rangle + \frac{1}{\tau}D_\varphi (\bx^{k+1},\bx^k) \label{eq:rel_smooth_proof} \\
        \Longleftrightarrow \; & \langle \nabla F(\bx^k), \bx^{k} - \bx^{k+1}\rangle \leq F(\bx^k) - F(\bx^{k+1})  + \frac{1}{\tau}D_\varphi (\bx^{k+1},\bx^k). \notag  
    \end{align}    
\end{itemize}
Injecting these equations in~\eqref{eq:subgrad_ineq} we obtain
$$
\begin{aligned}
    \tau( R(\bx^{k+1}) - R(\bx)) &\leq \tau (F(\bx) - F(\bx^{k})) + \tau (F(\bx^k) - F(\bx^{k+1})) + D_\varphi (\bx^{k+1},\bx^k) \\
    &\quad + D_\varphi(\bx,\bx^k) - D_\varphi(\bx,\bx^{k+1}) - D_\varphi(\bx^{k+1},\bx^k),
\end{aligned}
$$
which simplifies as
$$
D_\varphi(\bx,\bx^{k+1}) \leq D_\varphi(\bx,\bx^k) + \tau (J(\bx) - J(\bx^{k+1})). 
$$
Note that we could have reached this point more directly using~\cite[Lemma 5]{bauschke2017descent}; however, the above details will serve to easily prove the convergence result of BPGM with backtracking stated in Corollary~\ref{coro:lin_cv_backtrack}. 

Next, taking $\bx=\bar\bx^k_{\varphi}$ and using that $D_{\varphi}(\bar \bx^{k+1}_{\varphi}, \bx^{k+1})\leq D_{\varphi}(\bar \bx^{k}_{\varphi}, \bx^{k+1})$ by  definition of $\bar\bx^{k+1}_{\varphi}$, we have that (notice that $J(\bar\bx^k_{\varphi})=J(\bar\bx^{k+1}_{\varphi})= \bar J)$
\begin{equation}\label{eq:nonsmooth_jerome_lemma}
D_{\varphi}(\bar \bx^{k+1}_{\varphi}, \bx^{k+1})\leq D_{\varphi}(\bar \bx^{k}_{\varphi}, \bx^{k+1})\leq D_{\varphi}(\bar\bx^k_{\varphi},\bx^k)+\tau (\bar J - J(\bx^{k+1})).
\end{equation}
Now, by~\Cref{lem:nonsmooth_rrscconseqs} we get that
\begin{equation}\label{eq:alphaimplication_composite}
J(\bx^{k+1})\geq \bar J + \mu \alpha_\mathcal{S}(\varphi)D_{h}(\bar\bx^{k+1}_{\varphi},\bx^{k+1}).
\end{equation}
Finally, combining~\cref{eq:alphaimplication_composite} with~\cref{eq:nonsmooth_jerome_lemma}, we derive that
\begin{equation}\label{eq:non_sym_lin_cv_composite}
\tau\mu \alpha_\mathcal{S}(\varphi) D_{\varphi}(\bar\bx^{k+1}_{\varphi}, \bx^{k+1})+D_{\varphi}(\bar \bx^{k+1}_{\varphi}, \bx^{k+1})\leq  D_{\varphi}(\bar\bx^k_{\varphi},\bx^k)    
\end{equation}
which in turn implies that
$$
D_{\varphi}(\bar \bx^{k+1}_{\varphi}, \bx^{k+1})\leq \frac{1}{1+\alpha_\mathcal{S}(\varphi)\tau\mu}D_{\varphi}(\bar \bx^k_{\varphi}, \bx^k),
$$
and applying the above inequality recursively finally leads to
\begin{equation}\label{eq: lin conv D varphi}
    D_{\varphi}(\bar\bx^{k+1}_{\varphi},\bx^{k+1})\leq \left(\frac{1}{1+\alpha_\mathcal{S}(\varphi)\tau\mu}\right)^k D_{\varphi}(\bar\bx^{0}_{\varphi},\bx^{0}),
\end{equation}
as desired. 

We will now show~\cref{item2mainthm}. Adding $R(\bx^{k+1})$ to both sides of the relative smoothness inequality \eqref{eq:rel_smooth_proof} we get that
$$
\begin{aligned}
J(\bx^{k+1})&\leq F(\bx^k)+\langle \nabla F(\bx^k),\bx^{k+1}-\bx^k\rangle+\frac{1}{\tau}D_{\varphi}(\bx^{k+1},\bx^k)+R(\bx^{k+1})\\
&=\min_{\bx\in\cC} F(\bx^k)+\langle \nabla F(\bx^k),\bx-\bx^k\rangle+\frac{1}{\tau}D_{\varphi}(\bx,\bx^k)+R(\bx)\\
&\leq \min_{\bx\in\cC} F(\bx)+\frac{1}{\tau}D_{\varphi}(\bx,\bx^k)+R(\bx)= \min_{\bx\in\cC} J(\bx)+\frac{1}{\tau}D_{\varphi}(\bx,\bx^k),
\end{aligned}
$$
where the first equality is obtained by the definition of~\eqref{eq:BPGM} and  the last inequality comes from the fact that $F$ is convex. Finally, we have that
$$
J(\bx^{k+1})\leq \min_{\bx\in\cC} J(\bx)+\frac{1}{\tau}D_{\varphi}(\bx,\bx^k)\leq \bar J+\frac{1}{\tau}D_{\varphi}(\bar\bx^k_{\varphi},\bx^k),
$$
which, by~\cref{eq: lin conv D varphi}, leads to
\begin{equation}\label{eq:rate_func_val}
    J(\bx^{k+1})-\bar J\leq \frac{1}{\tau}D_{\varphi}(\bar\bx^k_{\varphi},\bx^k)\leq \frac{1}{\tau}\left(\frac{1}{1+\alpha_\mathcal{S}(\varphi)\tau\mu}\right)^kD_{\varphi}(\bar\bx^0_{\varphi},\bx^0),
\end{equation}
completing the proof of the result.
\end{proof}
Note that, if $\mathcal{S}=\{\bar\bx\}$; i.e., if the solution to~\cref{eq:comp_obj} is unique, then~\cref{eq:linconv func values} simply writes
$$
J(\bx^{k+1})-\bar J\leq \frac{1}{\tau}\left(\frac{1}{1+\alpha_\mathcal{S}(\varphi)\tau\mu}\right)^kD_{\varphi}(\bar\bx,\bx^0).
$$

\begin{remark} We make three remarks regarding~\Cref{thm:lin_conv_rate}:
\begin{enumerate}
\item[(a)] While in \Cref{theor assumptions} we require $F$ to satisfy~\cref{eq:rrsc}, it is worth noting that, when $R$ is differentiable, this condition can instead be imposed on the composite functional $J$. In this way,~\cref{eq:rrsc} may be inherited either by $J$ itself or solely by one of the two terms $F$ or $R$. A generalization of~\cref{eq:rrsc} to non-differentiable functions would also open the possibility to leverage the geometrical properties of non-differentiable $R$ in order to satisfy the required assumptions. 
\item[(b)] 
Note that Theorem~\ref{thm:lin_conv_rate} does not guarantee global pointwise convergence of the iterates in general (i.e., that the iterates $(\bx^k)_{k\in\mathbb{N}}$ generated by~\cref{eq:BPGM} converge to a solution of~\cref{eq:comp_obj}). This may be achieved by requiring additional assumptions on the Legendre function $\varphi$ such as for instance Assumption $\mathbf{H}$ in~\cite{bauschke2017descent, Teboulle2018}.

\item[(c)] Notice that, although the result of~\Cref{thm:lin_conv_rate} is stated in terms of the Bregman projection associated to $\varphi$ (i.e., $\bar \bx_{\varphi}$) our~\cref{eq:rrsc} depends instead on the Euclidean projection (i.e., $\bar\bx_{\ell_2}$). This difference of geometry is primarily technical for two reasons. First, while the Bregman projection arises naturally in the proof of~\Cref{thm:lin_conv_rate}, proving the growth condition of~\Cref{lem:nonsmooth_rrscconseqs} requires that all points along the line path $\boldsymbol{\gamma}(t)$ project onto the same point, namely an Euclidean projection. The key inequality $D_{\varphi}(\bar\bx_{\ell_2},\bx)\geq D_{\varphi}(\bar\bx_{\varphi},\bx)$ allows us to complete the proof. Second, using an Euclidean projection in~\cref{eq:rrsc} is key in the proof of existence of a $\mu>0$ for functions of the form $F=G(\bA\cdot)$, as it is the natural projection to consider when exploiting the decomposition $\R^N = \mathrm{ker}(\bA) \oplus \mathrm{ker}(\bA)^\perp$ (see~\Cref{prop:rrsc for KL} for an example).
\end{enumerate}
\end{remark}

In practice, deriving an accurate estimate of the relative smoothness constant $L$ can be challenging, often resulting in the selection of step-sizes that are smaller than the optimal valid value. One way to circumvent this issue,  and significantly ease the practical deployment of~\cref{eq:BPGM}, is to use a backtracking strategy. Such strategies not only automatically adjust the step-size but also allow for larger step-sizes in the early iterations, thereby further accelerating convergence. In fact, the linear convergence result of~\Cref{thm:lin_conv_rate} can be extended to~\eqref{eq:BPGM} using such a step-size selection strategy, as stated by the following corollary. 

\begin{corollary}[Linear convergence of BPGM with backtracking]\label{coro:lin_cv_backtrack}
    Let Assumption~\ref{theor assumptions} be satisfied, and consider the sequence $(\bx^k)_{k \in \mathbb{N}}$ generated by~\cref{eq:BPGM} with step-sizes $\tau^k$ obtained via the following backtracking strategy: for $\beta \in (0,1)$, and $\tau^0 > 0$, we set, for all $k\in\mathbb{N}$, $\tau^k := \tau^{k-1} \cdot \beta^j$ for the smallest $j=0,1,\ldots$ such that $\bx^{k+1} \in \cC\cap \mathrm{int}(\dom \varphi)$ and 
    \begin{equation}\label{eq:backtrack_cond}
        F(\bx^{k+1}) \leq F(\bx^k) + \langle \nabla F(\bx^k), \bx^{k+1} - \bx^k\rangle + \frac{1}{\tau^k}D_\varphi (\bx^{k+1},\bx^k) 
    \end{equation}
    holds true.
    
    Then, the sequence $(\bx^k)_{k \in \mathbb{N}}$ obeys the two statements of Theorem~\ref{thm:lin_conv_rate} by replacing $\tau$ with $\bar{\tau} = \beta / L$. 
\end{corollary}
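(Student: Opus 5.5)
The plan is to rerun the proof of \Cref{thm:lin_conv_rate} with a varying step $\tau^k$, in two stages. First, a uniform lower bound $\tau^k\geq\bar\tau=\beta/L$ for all $k$. Second, the per-iteration contraction with $\tau^k$ in place of $\tau$, which then lets us pass to $\bar\tau$.

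\textbf{Step 1: lower bound on the step.} We argue by induction, using the convention $\tau^{-1}:=\tau^0$ so that the first step is also backtracked from $\tau^0$. We may assume $\tau^0\geq\beta/L$; otherwise the constant in the statement must be $\min(\tau^0,\beta/L)$. Any trial step $\tau\leq 1/L$ is accepted. Indeed, \Cref{theor assumptions}\eqref{ass12} gives that the candidate is well defined and lies in $\cC\cap\mathrm{int}(\dom\varphi)$. Moreover, since $LD_\varphi\leq\frac1\tau D_\varphi$, the relative smoothness~\eqref{eq:rel_Lip} yields~\eqref{eq:backtrack_cond}. Hence the backtracking loop stops before the trial step falls below $1/L$. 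Each rejection multiplies the step by $\beta$, so either the step is inherited unchanged from $\tau^{k-1}\geq\bar\tau$, or the last rejected trial exceeded $1/L$, giving $\tau^k>\beta/L$. By induction, $\tau^k\geq\bar\tau$ for all $k$. The loop also terminates in finitely many trials for the same reason.

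\textbf{Step 2: one-step inequality with $\tau^k$.} In the proof of \Cref{thm:lin_conv_rate}, the only place where $\tau\leq1/L$ was used is inequality~\eqref{eq:rel_smooth_proof}. With backtracking, that inequality with $\tau=\tau^k$ is exactly the acceptance test~\eqref{eq:backtrack_cond}. The optimality condition, the convexity of $F$ and the three-point identity hold for any positive step. Therefore the same chain of computations gives
\[
D_\varphi(\bx,\bx^{k+1})\leq D_\varphi(\bx,\bx^k)+\tau^k\bigl(J(\bx)-J(\bx^{k+1})\bigr)
\quad\text{for all }\bx\in\cC\cap\mathrm{int}(\dom\varphi).
\]
Taking $\bx=\bar\bx^k_\varphi$ and invoking \Cref{lem:nonsmooth_rrscconseqs}, which does not involve the step, we obtain
\[
D_\varphi(\bar\bx^{k+1}_\varphi,\bx^{k+1})\leq\frac{1}{1+\alpha_\cS(\varphi)\tau^k\mu}\,D_\varphi(\bar\bx^k_\varphi,\bx^k)\leq\frac{1}{1+\alpha_\cS(\varphi)\bar\tau\mu}\,D_\varphi(\bar\bx^k_\varphi,\bx^k).
\]
The last inequality uses $\tau^k\geq\bar\tau$ from Step 1. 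Iterating this bound gives item~\ref{item1mainthm} with $\bar\tau$ in place of $\tau$.

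\textbf{Step 3: function values.} We repeat the argument for item~\ref{item2mainthm}. Adding $R(\bx^{k+1})$ to~\eqref{eq:backtrack_cond}, using the minimality of $\bx^{k+1}$ and then the convexity of $F$, gives
\[
J(\bx^{k+1})-\bar J\leq\frac1{\tau^k}\,D_\varphi(\bar\bx^k_\varphi,\bx^k)\leq\frac1{\bar\tau}\,D_\varphi(\bar\bx^k_\varphi,\bx^k).
\]
Combining this with Step 2 yields~\eqref{eq:linconv func values} with $\bar\tau$.

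\textbf{Main obstacle.} The delicate point is Step 1, namely the non-increasing bookkeeping of $\tau^k$ together with the initialization. If $\tau^0$ happens to be smaller than $\beta/L$, the bound $\tau^k\geq\beta/L$ fails, so this case has to be excluded or handled by the $\min$ above. We also need that every rejected candidate is still well defined, so that the backtracking loop is meaningful; this is guaranteed by \Cref{theor assumptions}\eqref{ass12} for every step size.
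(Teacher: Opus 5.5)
Your proposal is correct and follows the paper's route. Like the paper, you rerun the proof of Theorem~\ref{thm:lin_conv_rate} with $\tau^k$, using the acceptance test~\eqref{eq:backtrack_cond} in place of relative smoothness, and then bound the step-sizes below by $\beta/L$. Your induction in Step~1 is actually more careful than the paper's contradiction argument. That argument tacitly assumes $j\geq 1$, so that $\tau^{k-1}\beta^{j-1}$ is a legitimate trial step. It therefore misses the case $j=0$ and the need for $\tau^0\geq\beta/L$; without that assumption the constant must be $\min\{\tau^0,\beta/L\}$, exactly as you point out.
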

\begin{proof}
First of all, one can reproduce the proof of Theorem~\ref{thm:lin_conv_rate} replacing $\tau$ by $\tau^k$ and the relative $L$-smoothness inequality in~\eqref{eq:rel_smooth_proof} by the backtracking condition~\eqref{eq:backtrack_cond}. We then get, in place of~\eqref{eq: lin conv D varphi}, the following inequality
\begin{equation}
    D_{\varphi}(\bar\bx^{k+1}_{\varphi},\bx^{k+1})\leq \prod_{l=1}^k \left(\frac{1}{1+\alpha_\mathcal{S}(\varphi)\tau^l\mu}\right) D_{\varphi}(\bar\bx^{0}_{\varphi},\bx^{0}),
\end{equation}
and similarly for~\eqref{eq:rate_func_val}. It remains to show that the sequence of step-sizes $(\tau^k)_{k \in \mathbb{N}}$ does not vanish. To do so, we will show that there  exists $\bar \tau >0$ such that  $ \tau^k \geq \bar \tau$ for all $k$, as in that case
$$
\prod_{l=1}^k \left(\frac{1}{1+\alpha_\mathcal{S}(\varphi)\tau^l\mu}\right) \leq \left(\frac{1}{1+\alpha_\mathcal{S}(\varphi)\bar \tau\mu}\right)^k,
$$
thus recovering the linear convergence rate. From the relative $L$-smoothness inequality~\eqref{eq:rel_smooth_proof} we get that the backtracking condition~\eqref{eq:backtrack_cond}
is always guaranteed whenever $\tau^k \leq 1/L$. Next, we claim that $\tau^k>\bar\tau:=\beta/L$. We will proceed by contradiction assuming that $\tau^k \leq  {\beta}/{L}$. By recalling the step-size selection rule $\tau^k = \tau^{k-1} \cdot \beta^j$, we get that $\tau^k / \beta = \tau^{k-1} \cdot \beta^{j-1} \leq {1}/{L}$, and~\eqref{eq:backtrack_cond} holds for $\tau^{k-1} \cdot \beta^{j-1}$. This contradicts the minimality of $j$, and completes the proof of the result.
\end{proof}

Deploying this backtracking strategy only requires the selection of an arbitrary initial value $\tau^0$ and $\beta \in(0,1)$. Moreover, the computational overhead it introduces is negligible. Indeed, the gradient $\nabla F(\bx^k)$ still needs to be computed only once per iteration, just as in the constant step-size variant, and not once per subiteration of the backtracking search. Hence, the additional computational cost is limited to only a few evaluations of the term $D_\varphi(\bx^{k+1}, \bx^k)$.\\






\paragraph{\textbf{Relation to existing results.}}
As mentioned in the introduction, linear convergence of~\cref{eq:BPGM} has been previously analyzed in~\cite{Bauschke2019, lu2018relatively, Teboulle2018}, under different assumptions. In this paragraph, we discuss how our results are related with and complement these  works.

First, we extend~\cite{lu2018relatively,Teboulle2018} to functions that are not \emph{globally}  strongly convex relative to $\varphi$, thanks to the proposed~\cref{eq:rrsc}, which only requires relative strong convexity between a point and its projection onto the solution set. However, this comes at the price of introducing the symmetry coefficient $\alpha_\mathcal{S}(\varphi)$ and a worse convergence rate than the one derived in~\cite{lu2018relatively,Teboulle2018} when global relative strong convexity holds. 

Second, while both our~\cref{eq:rrsc} condition and the \emph{gradient dominated conditions} of~\cite[Definition 3.2]{Bauschke2019} are weaker than relative strong convexity, they lead to rather different convergence analyses, and it is difficult to compare the resulting rates. In particular, the derived linear convergence rate in~\cite{Bauschke2019}  relies on the existence of a parameter $\theta(\tau)$, which, as the authors acknowledge, may be difficult to check in practice. \\

We are now ready to apply our theoretical framework to (regularized) Kullback--Leibler regression problems.

\section{Applications: Kullback--Leibler regression}\label{sec:KL}
We consider (regularized) Kullback--Leibler (KL) regression problems, that is Problem~\cref{eq:comp_obj} with $\cC=\R^N_{\geq 0}$, $R\in\Gamma_0(\cC)$ a bounded from below regularizer, and $F$ the generalized, smoothed Kullback-Leibler divergence, given by
\begin{equation}\label{eq:fid_KL}
F(\bx)= \mathrm{KL}(\by, \bA\bx+\mathbf{b}) := \sum_{m=1}^M [\bA\bx]_m + b_m + y_m \log \left(\frac{y_m}{[\bA\bx]_m + b_m} \right) - y_m,
\end{equation}
where $\bA\in \R_{\geq 0}^{M \times N}$ is a linear operator modeling some measurement acquisition process, $\mathbf{b}\in \R^M_{> 0}$ a background signal (a commonly-used additive term in practice, see e.g.~\cite{harmany2011spiral}), and $\by\in \mathbb{R}^M_{> 0}$ denotes the vector of observations. We additionally require that $\bA$ does not have any column entirely zero. The above optimization problem encompasses several applications such as Poisson inverse problems, particularly in astronomical imaging~\cite{Bertero2009,Bertero2018} or microscopy~\cite{dey20043d, Zunino_2023}. It is also at the heart of Poisson nonnegative matrix factorization~\cite{LS2000}, with applications in latent semantic analysis \cite{can04}, recommender systems~\cite{gopalan2015scalable} or spectral unmixing~\cite{Fevotte2009}. Finally, notice that we call~\cref{eq:fid_KL} generalized, smoothed KL divergence due to the fact that $\bA\bx$, $\by$ may not necessarily be discrete probability distributions, and because we account for the presence of the background term $\mathbf{b}>0$. However, for conciseness, we will call~\eqref{eq:fid_KL} the KL divergence.


\begin{remark}\label{rem:dark_regime}
While we assume $\by>0$ for simplicity, it is worth mentioning that the upcoming analysis also holds for $\by \geq 0$. Indeed, when $y_m=0$, the terms in~\cref{eq:fid_KL} with $y_m=0$ (using the common convention $0 \log 0 = 0$) are linear and can be seen as an analysis-like $\ell_1$-norm regularizer, while the remaining terms are rewritten as a new KL term with $\tilde{\by} > 0$. Specifically, let us denote by
 $\sigma(\by)\subset\{1,\ldots, M\}$ the support of $\by$ and  $\bA_{\sigma(\by)}$ the restriction of $\bA$ to the rows indexed by $\sigma(\by)$. By recalling that both $\bx$ and $\bA$ have nonnegative entries, we have that
$$
\begin{aligned}
\mathrm{KL}(\by,\bA\bx + \mathbf{b})&= \mathrm{KL}(\by_{\sigma(\by)}, \bA_{\sigma(\by)}\bx + \mathbf{b}_{\sigma(\by)})+\sum_{m\in\sigma(\by)^c}\left([\bA\bx]_m + b_m\right)\\
&=\mathrm{KL}(\by_{\sigma(\by)},\bA_{\sigma(\by)}\bx + \mathbf{b}_{\sigma(\by)})+\|\bA_{\sigma(\by)^c}\bx\|_1+\|\mathbf{b}_{\sigma(\by)^c}\|_1
\end{aligned}
$$
and~\cref{eq:comp_obj} simply becomes
$$
\argmin_{\bx\in\cC} \mathrm{KL}(\by_{\sigma(\by)},\bA_{\sigma(\by)}\bx + \mathbf{b}_{\sigma(\by)})+\|\bA_{\sigma(\by)^c}\bx\|_1+R(\bx),
$$
as desired.
\end{remark}


In the following, we analyze the convergence of~\eqref{eq:BPGM} for such KL problems from both theoretical (\Cref{sec:KL_theory}) and numerical (\Cref{sec:experiments}) perspectives. This is done for the three following Bregman functions $\varphi$:
\begin{enumerate}
\item\label{varphi is Burg} \emph{Burg's entropy}: $\varphi_\mathrm{B}(\bx)=\sum_{n=1}^N-\log(x_n)$, with $\dom\varphi_\mathrm{B}= \R_{>0}^N$,
\item\label{varphi is sBurg} \emph{Smoothed Burg's entropy}: $\varphi_{\mathrm{sB}}(\bx)=\sum_{n=1}^N -\log(x_n+\xi)$ for some smoothing parameter $\xi>0$, and where $\dom\varphi_{\mathrm{sB}}=\R_{> -\xi}^N$, 
\item\label{varphi is l2} \emph{Squared $\ell_2$-norm}: $\varphi_{\ell_2}(\bx)=(1/2)\|\bx\|_2^2$, with $\dom\varphi_{\ell_2}=\R^N$.
\end{enumerate}
While the Burg's entropy is commonly used for KL regression, since the KL divergence is relatively smooth with respect to $\varphi_\mathrm{B}$, see~\cite{bauschke2017descent} and~\Cref{prop:rel_smooth_KL} (with a better constant $L$ than for $\varphi_{\ell_2}$), the smoothed Burg's entropy has, to our knowledge, never been considered in this context. Yet, as we will show, it corresponds to the suitable geometry required to achieve linear convergence. Although not previously used in our present context, such a function has already been considered as a Bregman function for constructing exact continuous relaxations of the $\ell_0$ pseudo-norm~\cite{EssafriKL,Essafri2024}, for demonstrating the existence of an RRSC-like constant in Poisson regression problems~\cite{chirinos2025}, for designing an Expectation-Majorization algorithm for Penalized Maximum-Likelihood image reconstruction problems~\cite{FessHe1995}, and, very recently, as a tangent majorant for Bregman Majorization-Minimization algorithms~\cite{adly2026}, where the authors refer to such $\varphi$ as a ``log-shift'' function.

\subsection{Theoretical analysis}\label{sec:KL_theory}
To start, we provide some relevant properties about the solution set $\cS$ of~\cref{eq:comp_obj} in the present context of KL regression.

\begin{lemma}\label{lem:sol set KL} The solution set $\cS$ of~\cref{eq:comp_obj}, with $F$ given by~\cref{eq:fid_KL} and any $R\in\Gamma_0(\R_{\geq 0}^N)$ bounded from below, is nonempty, closed, and bounded (hence compact).
\end{lemma}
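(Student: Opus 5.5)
The approach is to show that $J = F + R$ is coercive on $\cC = \R^N_{\geq 0}$ and lower semicontinuous there. Existence and compactness of the minimizer set then follow from standard results.

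\emph{Coercivity.} Let $r := \inf R > -\infty$. Since $\bA$ has nonnegative entries and no zero column, the number
\[
c := \min_n \sum_m A_{mn}
\]
is strictly positive. For $\bx \geq 0$ this gives
\[
\sum_m [\bA\bx]_m = \sum_n x_n \sum_m A_{mn} \geq c\|\bx\|_1 .
\]
For each $m$, the scalar map $t \mapsto t + b_m + y_m\log\big(y_m/(t+b_m)\big) - y_m$ on $t \geq 0$ grows at least like $t - y_m\log(t+b_m) + \mathrm{const}$. So each summand is bounded below, and the sum grows like
\[
\sum_m [\bA\bx]_m - \sum_m y_m \log\big([\bA\bx]_m + b_m\big) \;\geq\; c\|\bx\|_1 - \|\by\|_1 \log\big(\|\bA\|\,\|\bx\|_1 + \max_m b_m\big) - \mathrm{const}.
\]
This tends to $+\infty$ as $\|\bx\|\to\infty$ within $\cC$. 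Hence $J \geq F + r$ is coercive on $\cC$, and every sublevel set $\{\bx\in\cC : J(\bx)\leq \alpha\}$ is bounded.

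\emph{Lower semicontinuity and existence.} The function $F$ is finite and continuous on $\cC$, because $\mathbf{b}>0$ keeps every logarithm argument bounded away from $0$. Also $R\in\Gamma_0$, so $J$ is lsc on the closed set $\cC$. I must also check that $J$ is proper on $\cC$, i.e.\ $\dom R\cap\cC\neq\emptyset$. This follows from $R\in\Gamma_0(\R^N_{\geq 0})$ being proper, and it is consistent with the standing assumption $\cC\subseteq\mathrm{int}(\dom J)$. A nonempty sublevel set is then closed and bounded, hence compact, so the Weierstrass theorem gives a minimizer. Thus $\cS\neq\emptyset$ and $\bar J$ is finite.

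\emph{Closedness and boundedness of $\cS$.} Write
\[
\cS=\{\bx\in\cC : J(\bx)\leq \bar J\}.
\]
This is a sublevel set of an lsc function intersected with a closed set, so it is closed. By coercivity it is bounded, so $\cS$ is compact. (Convexity of $\cS$ also comes for free, though it is not needed here.)

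The only real work is the coercivity estimate, specifically handling the $-y_m\log$ terms against the linear growth. I expect the estimate above, linear growth versus logarithmic growth, to settle it. Note that no rank assumption on $\bA$ is needed, only nonnegativity and the absence of zero columns.
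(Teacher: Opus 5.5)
Your proposal is correct and follows essentially the same route as the paper. Both arguments get coercivity of $J$ on $\R^N_{\geq 0}$ from $\sum_m [\bA\bx]_m \geq c\|\bx\|_1$ with $c=\min_n\|\ba_n\|_1>0$ and $R$ bounded below, then combine it with lower semicontinuity to obtain a nonempty compact solution set; the paper cites Bauschke--Combettes, Prop.~11.15, where you argue via Weierstrass on sublevel sets, and you make explicit, through the linear-versus-logarithmic estimate, the coercivity of $\mathrm{KL}(\by,\cdot+\mathbf{b})$ that the paper simply asserts.
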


\begin{proof} To prove the result we only need to show that the composite function $J$ is coercive on $\R_{\geq 0}^N$. Indeed, given that $J$ is lower semi-continuous over the closed convex set $\R_{\geq 0}^N$, we get  by \cite[Proposition 11.15]{BCombettes} that coercivity ensures that $\mathcal{S}$ is nonempty and compact. To that end, we will show that, if $\|\bx\|_1\to+\infty$, then $J(\bx)\to+\infty$. Recall that, by assumption, the matrix $\bA\in\R^{M\times N}_{\geq 0}$, with entries $a_{m,n}$, $m=1,\ldots,M$, $n=1,\ldots, N$, has no column which is entirely zero. This means that, for every $n\in\{1,\ldots, N\}$, there exists $m\in\{1,\ldots, M\}$ such that $a_{m,n}>0$. Then,  defining the constant $c := \min_n \|\ba_n\|_1$, we have that 
$$
\|\bA\bx\|_1=\sum_{m=1}^M \left|\sum_{n=1}^N a_{m,n}x_n\right| = \sum_{n=1}^N x_n \sum_{m=1}^M a_{m,n} \geq c\|\bx\|_1,
$$
for all $\bx\in\cC$. Consequently, if $\|\bx\|_1\to+\infty$, then indeed $\|\bA\bx\|_1\to+\infty$. Now, since $\mathrm{KL}(\by,\cdot+\mathbf{b})$ is coercive, we derive that
$$
\lim_{\|\bx\|_1\to+\infty} \mathrm{KL}(\by, \bA\bx+\mathbf{b})=\lim_{\bz\to+\infty}\mathrm{KL}(\by,\bz+\mathbf{b})=+\infty,
$$
showing that $\mathrm{KL}(\by, \bA\cdot+\mathbf{b})$ is coercive. Finally, combining this with the fact that $R$ is bounded from below proves that $J$ is coercive on $\R_{\geq 0}^N$ and completes the proof. 
\end{proof}

We now aim at verifying whether~\Cref{theor assumptions} holds for each of the three functions $\varphi$ we consider. First, one can see from their definitions that we  have $\cC\subseteq\overline{\dom\varphi}$, that is \Cref{theor assumptions}-\ref{ass1}. Second, \Cref{theor assumptions}-\cref{ass11,ass12}, are trivially satisfied for $\varphi_{\mathrm{sB}}$ and $\varphi_{\ell_2}$ as $\cS \subset \cC$ by construction and $\cC\subset \mathrm{int}(\dom \varphi_{\mathrm{sB}}) \subset \mathrm{int}(\dom \varphi_{\ell_2})$. Regarding $\varphi_\mathrm{B}$, \Cref{theor assumptions}-\ref{ass12} holds as $D_{\varphi_\mathrm{B}}(\cdot, \bx^k)$ in~\eqref{eq:BPGM} acts as a log-barrier when approaching $\partial \cC$, and \Cref{theor assumptions}-\ref{ass11} restricts our analysis for $\varphi_\mathrm{B}$ to situations in which $\cS$ contains at least one  positive (i.e., without any zero component) solution. To analyze the three final requirements in \Cref{theor assumptions}, which are both the most involved and the most important, we introduce the following set: given $\theta\geq0$, define
\begin{equation}\label{eq:Skerperp}
    \cS_{\ker}^{\theta}
    :=  \left\{
        \bx \in \cC \setminus \cS
        \ \mid \
        \angle(\bx - \bar \bx_{\ell_2},\ker(\bA))\leq\theta
    \right\}. 
\end{equation}
In words, the set $\cS_{\ker}^{\theta}$ consists of all non-solution points whose normal displacement from their Euclidean projection onto $\cS$ forms a sufficiently small angle with $\ker(\bA)$.
The role and importance of this set is revealed by the following lemma.
\begin{lemma}\label{lem:necessary condition}
  Let $F$ be the KL divergence given by~\cref{eq:fid_KL}. Then, regardless of the choice of $\varphi$, we have $\mu = 0$ in~\eqref{eq:rrsc} for all $\bx \in \cS^0_{\ker}$.
\end{lemma}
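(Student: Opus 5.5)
For $\bx \in \cS^0_{\ker}$ the angle condition with $\theta=0$ means that $\bx - \bar\bx_{\ell_2}$ lies in $\ker(\bA)$. The plan is to show that the left-hand side of~\cref{eq:rrsc} then vanishes, while the right-hand side is strictly positive for any admissible Legendre $\varphi$. This forces $\mu \leq 0$, i.e., the only constant compatible with~\cref{eq:rrsc} at such points is $\mu=0$.

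\textbf{Step 1: the left-hand side vanishes.} Differentiating~\cref{eq:fid_KL} gives
$$
\nabla F(\bx) = \bA^\top\left(\mathbf{1} - \frac{\by}{\bA\bx + \mathbf{b}}\right),
$$
with division taken componentwise. Hence $\nabla F(\bx) - \nabla F(\bar\bx_{\ell_2}) = \bA^\top \bv$ for some $\bv\in\R^M$. Since $\bA(\bx - \bar\bx_{\ell_2}) = 0$, we get
$$
\langle \nabla F(\bx)-\nabla F(\bar\bx_{\ell_2}), \bx-\bar\bx_{\ell_2}\rangle = \langle \bv, \bA(\bx-\bar\bx_{\ell_2})\rangle = 0.
$$
More directly, $\bA\bx = \bA\bar\bx_{\ell_2}$ already gives $\nabla F(\bx) = \nabla F(\bar\bx_{\ell_2})$, so the difference of gradients is zero.

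\textbf{Step 2: the right-hand side is strictly positive.} Since $\bx\notin\cS$ (by definition of $\cS^0_{\ker}$), we have $\bx \neq \bar\bx_{\ell_2}$. Because $\varphi$ is Legendre, it is strictly convex on $\mathrm{int}(\dom\varphi)$, so $\nabla\varphi$ is strictly monotone there. For $\bx,\bar\bx_{\ell_2} \in \mathrm{int}(\dom\varphi)$ this yields
$$
\langle \nabla \varphi(\bx)-\nabla \varphi(\bar\bx_{\ell_2}), \bx-\bar\bx_{\ell_2}\rangle > 0 .
$$
Combining with Step 1, \cref{eq:rrsc} at $\bx$ reads $0 \geq \mu \cdot (\text{positive})$, which forces $\mu \le 0$. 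So no $\mu>0$ works, and $\mu=0$ is the only admissible value.

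\textbf{Main obstacle: domain issues.} If $\bar\bx_{\ell_2}$ lies on $\partial\dom\varphi$, which can happen for $\varphi_\mathrm{B}$, then $\nabla\varphi(\bar\bx_{\ell_2})$ is undefined and the inequality is vacuous or ill-posed. I would handle this with a remark: the statement concerns points where~\cref{eq:rrsc} is meaningful. Alternatively, one can use the Legendre property: $\|\nabla\varphi\|$ blows up at the boundary, so the monotonicity pairing is $+\infty$ or positive, and the conclusion persists.

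I would also note that every $\bx\in\cS^0_{\ker}$ is automatically feasible. Indeed, $\bA\bx=\bA\bar\bx_{\ell_2}$ gives $F(\bx)=F(\bar\bx_{\ell_2})$, so such points differ from solutions only through $R$. This shows the set is nontrivial exactly when $R$ is what breaks the ties, but this observation is not needed for the lemma itself.
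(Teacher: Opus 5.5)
Your proposal is correct and takes essentially the paper's route: you write $\nabla F$ as $\bA^\top$ applied to a vector, move $\bA^\top$ across the inner product, and use $\bx-\bar\bx_{\ell_2}\in\ker(\bA)$ to make the left-hand side of \eqref{eq:rrsc} vanish. Your Step~2, which shows the right-hand side is strictly positive by strict convexity of the Legendre function $\varphi$, makes explicit a fact that the paper's proof by contradiction uses without stating it.
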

\begin{proof} To prove the result, we proceed by contradiction; i.e., by assuming that there exists $\mu>0$ such that, for all $\bx\in \cS^0_{\ker}$,~\eqref{eq:rrsc} holds. However, given the form of $F$, the left hand side can be expressed as
$$
\begin{aligned}
\langle \nabla F(\bx)-\nabla F(\bar\bx_{\ell_2}), \bx-\bar\bx_{\ell_2}\rangle 
&= 
\langle \bA^T\nabla \mathrm{KL}(\by,\bA\bx)- \bA^T\nabla \mathrm{KL}(\by,\bA\bar\bx_{\ell_2}),\bx-\bar\bx_{\ell_2} \rangle\\
&=
\langle \nabla \mathrm{KL}(\by,\bA\bx)- \nabla \mathrm{KL}(\by,\bA\bar\bx_{\ell_2}),\bA(\bx-\bar\bx_{\ell_2}) \rangle
\end{aligned}
$$
As $\bx\in \cS^0_{\ker}$, we get that $\bx-\bar\bx_{\ell_2}\in\ker(\bA)$, making the above term equal to zero, and hence, the existence of such $\mu>0$ impossible.
\end{proof}

From Lemma~\ref{lem:necessary condition}, we deduce that for~\eqref{eq:rrsc} to hold over $\cC$ for the KL divergence, a necessary condition is that $\cS_{\ker}^{0} = \emptyset$. This is satisfied, for instance, when $\cS + \ker(\bA) = \cS$. However, $\cS_{\ker}^{0} \neq \emptyset$ in many cases of practical interest such as when $\bA$ is rank-deficient and $R \neq 0$ is such that the problem admits a unique solution. As such, to gain insights into most KL regression problems, we study in the following proposition, for each considered $\varphi$, the subset of $\cC$ over which~\eqref{eq:rrsc} holds.

\begin{figure}[htbp]
    \centering

\tikzset{every picture/.style={line width=0.75pt}} 

\begin{tikzpicture}[x=0.65pt,y=0.65pt,yscale=-1,xscale=1]

\draw [color={rgb, 255:red, 74; green, 144; blue, 226 }  ,draw opacity=1 ][fill={rgb, 255:red, 74; green, 144; blue, 226 }  ,fill opacity=1 ][line width=1.5]  [dash pattern={on 1.69pt off 2.76pt}]  (56.81,139.14) -- (137.02,222.38) ;
\draw [color={rgb, 255:red, 74; green, 144; blue, 226 }  ,draw opacity=1 ][line width=1.5]    (95.75,129.38) -- (140.25,179.38) ;
\draw  [fill={rgb, 255:red, 126; green, 211; blue, 33 }  ,fill opacity=0.28 ][line width=0.75]  (95.48,30.38) -- (239.19,30.38) -- (239.19,179.5) -- (95.48,179.5) -- cycle ;
\draw   (298.25,30.39) -- (441.99,30.39) -- (441.99,179.55) -- (298.25,179.55) -- cycle ;
\draw   (499.19,30.39) -- (642.93,30.39) -- (642.93,179.55) -- (499.19,179.55) -- cycle ;
\draw [color={rgb, 255:red, 74; green, 144; blue, 226 }  ,draw opacity=1 ][fill={rgb, 255:red, 74; green, 144; blue, 226 }  ,fill opacity=1 ][line width=1.5]  [dash pattern={on 1.69pt off 2.76pt}]  (258.14,137.93) -- (338.36,221.17) ;
\draw [color={rgb, 255:red, 74; green, 144; blue, 226 }  ,draw opacity=1 ][fill={rgb, 255:red, 74; green, 144; blue, 226 }  ,fill opacity=1 ][line width=1.5]  [dash pattern={on 1.69pt off 2.76pt}]  (459.08,137.93) -- (539.29,221.17) ;
\draw [color={rgb, 255:red, 74; green, 144; blue, 226 }  ,draw opacity=1 ][line width=1.5]    (298.25,128.63) -- (342.51,174.3) -- (347.72,179.68) ;
\draw [color={rgb, 255:red, 74; green, 144; blue, 226 }  ,draw opacity=1 ][line width=1.5]    (499.13,112.97) -- (564.56,179.9) ;
\draw  [color={rgb, 255:red, 126; green, 211; blue, 33 }  ,draw opacity=1 ][fill={rgb, 255:red, 126; green, 211; blue, 33 }  ,fill opacity=0.28 ] (441.99,30.39) -- (441.63,63.16) -- (340.53,171.95) -- (307.45,171.83) -- (307.42,137.47) -- (411.08,30.16) -- cycle ;
\draw  [color={rgb, 255:red, 126; green, 211; blue, 33 }  ,draw opacity=1 ][fill={rgb, 255:red, 126; green, 211; blue, 33 }  ,fill opacity=0.28 ][dash pattern={on 0.84pt off 2.51pt}] (499.33,97.33) -- (578.05,179.53) -- (545.82,179.53) -- (499.3,129.14) -- cycle ;
\draw  [fill={rgb, 255:red, 126; green, 211; blue, 33 }  ,fill opacity=0.27 ] (96.98,248.05) -- (240.69,248.05) -- (240.69,397.17) -- (96.98,397.17) -- cycle ;
\draw   (299.32,247.63) -- (443.06,247.63) -- (443.06,396.79) -- (299.32,396.79) -- cycle ;
\draw   (500.26,247.63) -- (644,247.63) -- (644,396.79) -- (500.26,396.79) -- cycle ;
\draw  [color={rgb, 255:red, 126; green, 211; blue, 33 }  ,draw opacity=1 ][fill={rgb, 255:red, 126; green, 211; blue, 33 }  ,fill opacity=0.29 ] (308.52,247.79) -- (443.15,247.79) -- (443.15,389.08) -- (308.52,389.08) -- cycle ;
\draw  [color={rgb, 255:red, 74; green, 144; blue, 226 }  ,draw opacity=1 ][fill={rgb, 255:red, 74; green, 144; blue, 226 }  ,fill opacity=1 ] (327.39,364.3) .. controls (327.39,365.91) and (328.65,367.22) .. (330.2,367.22) .. controls (331.75,367.22) and (333.01,365.91) .. (333.01,364.3) .. controls (333.01,362.69) and (331.75,361.39) .. (330.2,361.39) .. controls (328.65,361.39) and (327.39,362.69) .. (327.39,364.3) -- cycle ;
\draw  [color={rgb, 255:red, 74; green, 144; blue, 226 }  ,draw opacity=1 ][fill={rgb, 255:red, 74; green, 144; blue, 226 }  ,fill opacity=1 ] (296.18,396.79) .. controls (296.18,398.59) and (297.58,400.05) .. (299.32,400.05) .. controls (301.05,400.05) and (302.46,398.59) .. (302.46,396.79) .. controls (302.46,394.99) and (301.05,393.53) .. (299.32,393.53) .. controls (297.58,393.53) and (296.18,394.99) .. (296.18,396.79) -- cycle ;
\draw  [color={rgb, 255:red, 74; green, 144; blue, 226 }  ,draw opacity=1 ][fill={rgb, 255:red, 74; green, 144; blue, 226 }  ,fill opacity=1 ] (497.45,396.79) .. controls (497.45,398.4) and (498.71,399.71) .. (500.26,399.71) .. controls (501.81,399.71) and (503.06,398.4) .. (503.06,396.79) .. controls (503.06,395.18) and (501.81,393.88) .. (500.26,393.88) .. controls (498.71,393.88) and (497.45,395.18) .. (497.45,396.79) -- cycle ;
\draw  [color={rgb, 255:red, 74; green, 144; blue, 226 }  ,draw opacity=1 ][fill={rgb, 255:red, 74; green, 144; blue, 226 }  ,fill opacity=1 ] (94.17,397.17) .. controls (94.17,398.78) and (95.43,400.09) .. (96.98,400.09) .. controls (98.53,400.09) and (99.79,398.78) .. (99.79,397.17) .. controls (99.79,395.56) and (98.53,394.26) .. (96.98,394.26) .. controls (95.43,394.26) and (94.17,395.56) .. (94.17,397.17) -- cycle ;
\draw  [color={rgb, 255:red, 74; green, 144; blue, 226 }  ,draw opacity=1 ][fill={rgb, 255:red, 74; green, 144; blue, 226 }  ,fill opacity=1 ] (125.79,368.19) .. controls (125.79,369.8) and (127.05,371.1) .. (128.6,371.1) .. controls (130.15,371.1) and (131.4,369.8) .. (131.4,368.19) .. controls (131.4,366.58) and (130.15,365.27) .. (128.6,365.27) .. controls (127.05,365.27) and (125.79,366.58) .. (125.79,368.19) -- cycle ;
\draw  [color={rgb, 255:red, 74; green, 144; blue, 226 }  ,draw opacity=1 ][fill={rgb, 255:red, 74; green, 144; blue, 226 }  ,fill opacity=1 ] (526.33,362.64) .. controls (526.33,364.25) and (527.58,365.55) .. (529.13,365.55) .. controls (530.68,365.55) and (531.94,364.25) .. (531.94,362.64) .. controls (531.94,361.03) and (530.68,359.72) .. (529.13,359.72) .. controls (527.58,359.72) and (526.33,361.03) .. (526.33,362.64) -- cycle ;
\draw  [color={rgb, 255:red, 126; green, 211; blue, 33 }  ,draw opacity=1 ][fill={rgb, 255:red, 126; green, 211; blue, 33 }  ,fill opacity=0.27 ][dash pattern={on 0.84pt off 2.51pt}] (509.08,362.64) .. controls (509.08,351.15) and (518.06,341.83) .. (529.13,341.83) .. controls (540.21,341.83) and (549.19,351.15) .. (549.19,362.64) .. controls (549.19,374.13) and (540.21,383.45) .. (529.13,383.45) .. controls (518.06,383.45) and (509.08,374.13) .. (509.08,362.64) -- cycle ;
\draw  [fill={rgb, 255:red, 126; green, 211; blue, 33 }  ,fill opacity=0.27 ] (95.91,447.26) -- (239.62,447.26) -- (239.62,596.38) -- (95.91,596.38) -- cycle ;
\draw   (298.25,446.84) -- (441.99,446.84) -- (441.99,596) -- (298.25,596) -- cycle ;
\draw   (499.19,446.84) -- (642.93,446.84) -- (642.93,596) -- (499.19,596) -- cycle ;
\draw  [color={rgb, 255:red, 126; green, 211; blue, 33 }  ,draw opacity=1 ][fill={rgb, 255:red, 126; green, 211; blue, 33 }  ,fill opacity=0.29 ] (307.45,447) -- (442.08,447) -- (442.08,588.29) -- (307.45,588.29) -- cycle ;
\draw  [color={rgb, 255:red, 74; green, 144; blue, 226 }  ,draw opacity=1 ][fill={rgb, 255:red, 74; green, 144; blue, 226 }  ,fill opacity=1 ] (326.32,563.51) .. controls (326.32,565.12) and (327.58,566.43) .. (329.13,566.43) .. controls (330.68,566.43) and (331.94,565.12) .. (331.94,563.51) .. controls (331.94,561.9) and (330.68,560.6) .. (329.13,560.6) .. controls (327.58,560.6) and (326.32,561.9) .. (326.32,563.51) -- cycle ;
\draw  [color={rgb, 255:red, 74; green, 144; blue, 226 }  ,draw opacity=1 ][fill={rgb, 255:red, 74; green, 144; blue, 226 }  ,fill opacity=1 ] (125.79,567.4) .. controls (125.79,568.47) and (126.63,569.34) .. (127.66,569.34) .. controls (128.69,569.34) and (129.53,568.47) .. (129.53,567.4) .. controls (129.53,566.32) and (128.69,565.45) .. (127.66,565.45) .. controls (126.63,565.45) and (125.79,566.32) .. (125.79,567.4) -- cycle ;
\draw  [color={rgb, 255:red, 74; green, 144; blue, 226 }  ,draw opacity=1 ][fill={rgb, 255:red, 74; green, 144; blue, 226 }  ,fill opacity=1 ] (525.26,561.85) .. controls (525.26,563.46) and (526.51,564.76) .. (528.06,564.76) .. controls (529.61,564.76) and (530.87,563.46) .. (530.87,561.85) .. controls (530.87,560.24) and (529.61,558.93) .. (528.06,558.93) .. controls (526.51,558.93) and (525.26,560.24) .. (525.26,561.85) -- cycle ;
\draw  [color={rgb, 255:red, 126; green, 211; blue, 33 }  ,draw opacity=1 ][fill={rgb, 255:red, 126; green, 211; blue, 33 }  ,fill opacity=0.27 ][dash pattern={on 0.84pt off 2.51pt}] (508.01,561.85) .. controls (508.01,550.36) and (516.99,541.04) .. (528.06,541.04) .. controls (539.14,541.04) and (548.12,550.36) .. (548.12,561.85) .. controls (548.12,573.34) and (539.14,582.66) .. (528.06,582.66) .. controls (516.99,582.66) and (508.01,573.34) .. (508.01,561.85) -- cycle ;
\draw [color={rgb, 255:red, 74; green, 144; blue, 226 }  ,draw opacity=1 ][fill={rgb, 255:red, 74; green, 144; blue, 226 }  ,fill opacity=1 ][line width=1.5]  [dash pattern={on 1.69pt off 2.76pt}]  (55.8,554.76) -- (136.02,638) ;
\draw [color={rgb, 255:red, 74; green, 144; blue, 226 }  ,draw opacity=1 ][fill={rgb, 255:red, 74; green, 144; blue, 226 }  ,fill opacity=1 ][line width=1.5]  [dash pattern={on 1.69pt off 2.76pt}]  (257.98,554.38) -- (338.2,637.62) ;
\draw [color={rgb, 255:red, 74; green, 144; blue, 226 }  ,draw opacity=1 ][fill={rgb, 255:red, 74; green, 144; blue, 226 }  ,fill opacity=1 ][line width=1.5]  [dash pattern={on 1.69pt off 2.76pt}]  (459.08,554.38) -- (539.29,637.62) ;
\draw  [color={rgb, 255:red, 0; green, 0; blue, 0 }  ,draw opacity=1 ][fill={rgb, 255:red, 255; green, 255; blue, 255 }  ,fill opacity=1 ][dash pattern={on 0.84pt off 2.51pt}] (95.46,519.62) -- (126.01,566.51) -- (95.62,548.54) -- cycle ;
\draw  [color={rgb, 255:red, 0; green, 0; blue, 0 }  ,draw opacity=1 ][fill={rgb, 255:red, 255; green, 255; blue, 255 }  ,fill opacity=1 ][dash pattern={on 0.84pt off 2.51pt}] (141.35,596.47) -- (172.47,596.14) -- (128.92,569.32) -- cycle ;
\draw  [color={rgb, 255:red, 0; green, 0; blue, 0 }  ,draw opacity=1 ][fill={rgb, 255:red, 255; green, 255; blue, 255 }  ,fill opacity=1 ][dash pattern={on 0.84pt off 2.51pt}] (342.98,588.42) -- (369.92,588.54) -- (331.04,565.07) -- cycle ;
\draw  [color={rgb, 255:red, 0; green, 0; blue, 0 }  ,draw opacity=1 ][fill={rgb, 255:red, 255; green, 255; blue, 255 }  ,fill opacity=1 ][dash pattern={on 0.84pt off 2.51pt}] (307,524.54) -- (326.94,561.15) -- (307.15,551.46) -- cycle ;
\draw  [color={rgb, 255:red, 0; green, 0; blue, 0 }  ,draw opacity=1 ][fill={rgb, 255:red, 255; green, 255; blue, 255 }  ,fill opacity=1 ][dash pattern={on 0.84pt off 2.51pt}] (519.17,543.15) -- (525.59,559.66) -- (509.35,555.02) -- (509.95,552.8) -- (510.95,550.99) -- (512.56,548.63) -- (514.83,546.21) -- (516.97,544.47) -- (518.57,543.36) -- cycle ;
\draw  [color={rgb, 255:red, 0; green, 0; blue, 0 }  ,draw opacity=1 ][fill={rgb, 255:red, 255; green, 255; blue, 255 }  ,fill opacity=1 ][dash pattern={on 0.84pt off 2.51pt}] (545.04,572.77) -- (545.91,571.11) -- (546.65,569.1) -- (530.47,564.03) -- (538.29,579.57) -- (539.9,578.6) -- (541.43,577.28) -- (543.04,575.75) -- (544.17,574.23) -- cycle ;
\draw  [draw opacity=0] (115,554.5) .. controls (114.8,554.31) and (114.63,554.07) .. (114.52,553.79) .. controls (114.07,552.67) and (114.75,551.34) .. (116.03,550.83) .. controls (117.31,550.32) and (118.71,550.82) .. (119.16,551.95) .. controls (119.27,552.22) and (119.31,552.51) .. (119.29,552.8) -- (116.84,552.87) -- cycle ; \draw   (115,554.5) .. controls (114.8,554.31) and (114.63,554.07) .. (114.52,553.79) .. controls (114.07,552.67) and (114.75,551.34) .. (116.03,550.83) .. controls (117.31,550.32) and (118.71,550.82) .. (119.16,551.95) .. controls (119.27,552.22) and (119.31,552.51) .. (119.29,552.8) ;  
\draw  [dash pattern={on 0.75pt off 0.75pt on 0.75pt off 0.75pt}]  (299,41.63) -- (307.25,41.63) ;
\draw    (273.75,36.38) .. controls (279.54,17.56) and (294.11,21.93) .. (301.93,36.27) ;
\draw [shift={(302.75,37.88)}, rotate = 244.12] [color={rgb, 255:red, 0; green, 0; blue, 0 }  ][line width=0.75]    (4.37,-1.32) .. controls (2.78,-0.56) and (1.32,-0.12) .. (0,0) .. controls (1.32,0.12) and (2.78,0.56) .. (4.37,1.32)   ;
\draw [color={rgb, 255:red, 208; green, 2; blue, 27 }  ,draw opacity=1 ][line width=0.75]    (96.13,535.44) -- (108.82,548.43) -- (155.63,596.31) ;
\draw  [dash pattern={on 0.75pt off 0.75pt on 0.75pt off 0.75pt}]  (299.17,260.29) -- (307.42,260.29) ;
\draw  [dash pattern={on 0.84pt off 2.51pt}]  (306.69,30.85) -- (307.46,179.77) ;
\draw  [dash pattern={on 0.84pt off 2.51pt}]  (440.8,172.5) -- (298.4,171.7) ;
\draw  [dash pattern={on 0.75pt off 0.75pt on 0.75pt off 0.75pt}]  (298.72,460.33) -- (306.97,460.33) ;
\draw [color={rgb, 255:red, 208; green, 2; blue, 27 }  ,draw opacity=1 ][fill={rgb, 255:red, 74; green, 144; blue, 226 }  ,fill opacity=1 ][line width=0.75]    (307.4,540.25) -- (353,588.08) ;
\draw [color={rgb, 255:red, 208; green, 2; blue, 27 }  ,draw opacity=1 ][fill={rgb, 255:red, 74; green, 144; blue, 226 }  ,fill opacity=1 ][line width=0.75]    (513.47,548.06) -- (543.59,575.47) ;

\draw (152.48,149.31) node [anchor=north west][inner sep=0.75pt]    {$\mathcal{S}$};
\draw (57.64,192.87) node [anchor=north west][inner sep=0.75pt]    {$\ker(\mathbf{A})$};
\draw (149.76,4.71) node [anchor=north west][inner sep=0.75pt]  [rotate=-359]  {$\varphi _{\mathrm{sB}}$};
\draw (357.84,3.48) node [anchor=north west][inner sep=0.75pt]    {$\varphi _{\mathrm{B}}$};
\draw (560.79,4.14) node [anchor=north west][inner sep=0.75pt]    {$\varphi _{\ell _{2}}$};
\draw (2.95,97.2) node [anchor=north west][inner sep=0.75pt]    {$S_{\ker}^{\theta } =\emptyset $};
\draw (509.81,90.35) node [anchor=north west][inner sep=0.75pt]    {$\mathcal{B} \supseteq \mathcal{S}$};
\draw (138.84,354.9) node [anchor=north west][inner sep=0.75pt]    {$\mathcal{S}$};
\draw (49.62,404.84) node [anchor=north west][inner sep=0.75pt]    {$\ker(\mathbf{A})$};
\draw (4.02,314.44) node [anchor=north west][inner sep=0.75pt]    {$S_{\ker}^{\theta } =\emptyset $};
\draw (137.77,554.11) node [anchor=north west][inner sep=0.75pt]    {$\mathcal{S}$};
\draw (48.4,604.05) node [anchor=north west][inner sep=0.75pt]    {$\ker(\mathbf{A})$};
\draw (2.95,513.65) node [anchor=north west][inner sep=0.75pt]    {$S_{\ker}^{\theta } \neq \emptyset $};
\draw (118.22,533.2) node [anchor=north west][inner sep=0.75pt]    {$\theta $};
\draw (265.67,33.95) node [anchor=north west][inner sep=0.75pt]    {$ \begin{array}{l}
\varepsilon \\
\end{array}$};

\end{tikzpicture}
    
    \caption{Validity regions in which~\cref{eq:rrsc} holds depending on 1) the rank of the matrix $\bA$, and 2) the choice of $\varphi$. In each plot, the green shaded areas represent regions in which~\cref{eq:rrsc} holds (for some fixed $\epsilon >0$ and $\theta \in (0,\pi/2)$), and the red lines in the third row indicate that~\cref{eq:rrsc} is not satisfied therein (cf.~\Cref{lem:necessary condition}). Note that $\varepsilon$ and $\theta$ can be chosen arbitrarily close to $0$ at the cost of worse constants $\mu$ and $\alpha_\cS$.}
    \label{fig:valid_regions}
\end{figure}

\begin{proposition}[Restricted relative strong convexity]\label{prop:rrsc for KL}
Let $F$ be the KL divergence  defined in~\cref{eq:fid_KL}. Then, we have that
\begin{itemize}
\item  For any $\theta\in(0,\pi/2)$, $F$ is restricted strongly convex relative to $\varphi_{\mathrm{sB}}$ over $\cC \setminus \cS^{\theta}_{\ker}$.
\item For any $\varepsilon>0$ and any $\theta\in(0,\pi/2)$, $F$ is restricted strongly convex relative to  $\varphi_\mathrm{B}$ over 
\begin{equation} \label{eq:linconv_set_for_Burg}
\cK_\varepsilon:= \left\lbrace  \bx \in \R^N_{\geq \varepsilon} \setminus \cS^{\theta}_{\ker} \;\mid\; \bar \bx_{\ell_2} \in \R^N_{\geq \varepsilon}\right\rbrace.
\end{equation} 
\item For any $\theta\in(0,\pi/2)$ and any compact set $\cB \subseteq \cC $ containing $\cS$, $F$ is restricted strongly convex (relative to $\varphi_\mathrm{\ell_2}$) over  $\cB \setminus \cS^{\theta}_{\ker}$. 
\end{itemize}
\end{proposition}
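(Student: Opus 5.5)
Write $\bd := \bx-\bar\bx_{\ell_2}$, $\bu := \bA\bx+\mathbf{b}$ and $\bar\bu := \bA\bar\bx_{\ell_2}+\mathbf{b}$. The plan is to bound the left-hand side of~\cref{eq:rrsc} from below by a multiple of $\|\bd\|_2^2$ and its right-hand side from above by a multiple of $\|\bd\|_2^2$, separately for each $\varphi$, and then take the ratio.

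\textbf{Lower bound on the KL side.} Since $\nabla\mathrm{KL}(\by,\bu)_m = 1-y_m/u_m$, a direct computation gives
$$
\langle \nabla F(\bx)-\nabla F(\bar\bx_{\ell_2}),\bd\rangle=\sum_{m=1}^M y_m\frac{([\bA\bd]_m)^2}{u_m\bar u_m}.
$$
\begin{itemize}
\item The angle condition $\bx\notin\cS^\theta_{\ker}$ means $\angle(\bd,\ker\bA)>\theta$. Hence the component of $\bd$ in $\ker(\bA)^\perp$ has norm at least $\sin\theta\|\bd\|_2$, and therefore $\|\bA\bd\|_2\geq \sigma_{\min}^+(\bA)\sin\theta\|\bd\|_2$, where $\sigma_{\min}^+(\bA)$ is the smallest nonzero singular value.
\item The denominators are controlled using $\bar u_m\leq \|\bA\|\max_{\bz\in\cS}\|\bz\|+\|\mathbf{b}\|_\infty=:U$, which is finite by compactness of $\cS$ (\Cref{lem:sol set KL}).
\item The factor $u_m$ is the delicate one, because it is unbounded as $\bx$ grows. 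It is controlled by $u_m\leq \bar u_m+|[\bA\bd]_m|\leq U+\|\bA\|\|\bd\|$.
\end{itemize}
This gives
$$
\mathrm{LHS}\geq \frac{y_{\min}}{U(U+c\|\bd\|)}\,\|\bA\bd\|_2^2 \gtrsim \frac{\sin^2\theta\,\|\bd\|^2}{1+\|\bd\|}.
$$
So the left-hand side is quadratic for small $\|\bd\|$ but only linear for large $\|\bd\|$.

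\textbf{Upper bound on the $\varphi$ side, case by case.}
\begin{itemize}
\item For $\varphi_{\ell_2}$ the right-hand side is exactly $\|\bd\|^2$. On a compact $\cB$, the norm $\|\bd\|$ is bounded, so the factor $1/(1+\|\bd\|)$ is bounded below and $\mu>0$ follows.
\item For $\varphi_{\mathrm{sB}}$ the right-hand side equals
$$
\sum_n \frac{d_n^2}{(x_n+\xi)(\bar x_n+\xi)}\leq \sum_n\frac{d_n^2}{\xi(x_n+\xi)}.
$$
Here the growth of $x_n$ must compensate the linear growth of the left-hand side. I would split into two regimes. When $\|\bd\|\leq 1$, use $(x_n+\xi)(\bar x_n+\xi)\geq\xi^2$. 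When $\|\bd\|>1$, use $d_n^2/(x_n+\xi)\leq |d_n|\,|d_n|/(x_n+\xi)$ together with $|d_n|\leq x_n+\bar x_n\leq x_n+\xi+C$, which gives $\mathrm{RHS}\lesssim \|\bd\|_1$. This matches the linear growth of the left-hand side up to constants.
\item For $\varphi_{\mathrm{B}}$ on $\cK_\varepsilon$ the same argument applies with $\xi$ replaced by the lower bound $\varepsilon$ on $x_n$ and $\bar x_n$: the denominators satisfy $x_n\bar x_n\geq\varepsilon^2$, and $d_n^2/(x_n\bar x_n)\lesssim |d_n|/\varepsilon$ for large $\bd$.
\end{itemize}

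\textbf{Conclusion and main obstacle.} Taking $\mu$ as the minimum of the ratio over the two regimes concludes the argument. The main obstacle is the unbounded (non-compact) regime for $\varphi_{\mathrm{sB}}$ and $\varphi_\mathrm{B}$. One must check carefully that the left-hand side decays only like $1/\|\bd\|$ relative to $\|\bd\|^2$, using $u_m\lesssim 1+\|\bd\|$. The right-hand side has a matching $1/(x_n+\xi)$ decay, but only coordinatewise, so I expect to need $\|\bd\|_1\leq\sqrt N\|\bd\|_2$ and the bound $x_n\geq |d_n|-C$ to align the two linear growths.
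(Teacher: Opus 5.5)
Your proposal is correct and follows the same route as the paper. Both arguments use the same expression for the KL side and the angle/smallest-nonzero-singular-value bound on $\|\bA\mathbf{d}\|_2$, with denominators controlled through compactness of $\cS$; your $U+c\|\mathbf{d}\|_2$ plays the role of the paper's $\|\bA\|_2\|\bx\|_2+\max_m b_m$. Both also use boundedness of $\cB$ for $\varphi_{\ell_2}$ and treat $\varphi_{\mathrm{B}}$ on $\cK_\varepsilon$ as the smoothed case with $\varepsilon$ in the role of $\xi$. Your explicit two-regime comparison ($\|\mathbf{d}\|_2\le 1$ versus $\|\mathbf{d}\|_2>1$, via $|d_n|\le x_n+C$ and $\|\mathbf{d}\|_1\le\sqrt N\|\mathbf{d}\|_2$) is valid, and it supplies the step the paper omits by deferring to an external result.
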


We defer its proof to~\Cref{app:rrsc}. The validity regions where \eqref{eq:rrsc} holds, as established in \Cref{prop:rrsc for KL}, are illustrated in dimension two for different scenarios in~\Cref{fig:valid_regions}. We next provide an analogous result for the restricted symmetry coefficient. 

\begin{proposition}[Restricted symmetry coefficient]\label{prop:restr_alpha_KL}
Let $\cS$ be the solution set of~\cref{eq:comp_obj}. We have that 
\begin{itemize}
\item $\alpha_{\cS}(\varphi_{\mathrm{sB}})>0$ over $\cC$,
\item $\alpha_{\cS \cap \R_{\geq \varepsilon}^N}(\varphi_\mathrm{B})>0$ over $\R_{\geq \varepsilon}^N$,
\item $\alpha(\varphi_{\ell_2})=1$ over $\R^N$.
\end{itemize}
\end{proposition}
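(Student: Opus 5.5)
All three functions are separable, $\varphi(\bx)=\sum_n h(x_n)$, so $D_\varphi(\bx,\by)=\sum_n D_h(x_n,y_n)$. We will therefore reduce the statement to a one-dimensional estimate. Since all terms are nonnegative, the elementary inequality $\sum_n a_n/\sum_n b_n\geq \min_n a_n/b_n$, taken over those $n$ with $b_n>0$, gives
$$
\frac{D_\varphi(\bx,\by)}{D_\varphi(\by,\bx)}\;\geq\;\inf\left\{\frac{D_h(s,t)}{D_h(t,s)} \;\middle|\; s\in I,\ t\in T,\ s\neq t\right\}.
$$
Here $I$ is the scalar domain allowed for $\bx$ (namely $\R_{\geq 0}$, $\R_{\geq\varepsilon}$ or $\R$), and $T$ is the set of coordinates of points of $\cS$ (respectively of $\cS\cap\R^N_{\geq\varepsilon}$). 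By~\Cref{lem:sol set KL}, $\cS$ is compact, so $T\subseteq[0,\bar t]$ for some $\bar t<\infty$, where $\bar t:=\max_{\bz\in\cS}\|\bz\|_\infty$. It therefore suffices to show that the scalar ratio is bounded below by a positive constant on $I\times T$.

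For $\varphi_{\ell_2}$ we have $D_h(s,t)=(s-t)^2/2$, which is symmetric, so the ratio is identically $1$ and $\alpha(\varphi_{\ell_2})=1$. For $\varphi_{\mathrm{sB}}$, note that $D_{\varphi_{\mathrm{sB}}}(s,t)=D_{\varphi_\mathrm{B}}(s+\xi,t+\xi)$. Both remaining cases therefore reduce to Burg's divergence $d(u,v):=u/v-\log(u/v)-1$, with $u\in[\delta,\infty)$ and $v\in[\delta,\bar t+\delta]$, where $\delta=\xi$ for the smoothed case and $\delta=\varepsilon$ for the Burg case. Writing $r=u/v$, we get $d(u,v)=g(r)$ and $d(v,u)=g(1/r)$ with $g(r)=r-\log r-1$. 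The ratio $\rho(r):=g(r)/g(1/r)$ depends only on $r$, and the constraints on $u$ and $v$ force $r\in[\delta/(\bar t+\delta),\infty)=:[r_0,\infty)$ with $r_0>0$.

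It then remains to show $\inf_{r\geq r_0,\,r\neq 1}\rho(r)>0$, and we handle this by splitting into regimes. First, $\rho$ is continuous and positive on $[r_0,\infty)\setminus\{1\}$, since $g>0$ away from $1$. Second, as $r\to1$, Taylor expansion gives $g(r)\sim (r-1)^2/2$ and $g(1/r)\sim (1/r-1)^2/2$, so $\rho(r)\to1$. Third, as $r\to\infty$, $g(r)\sim r$ while $g(1/r)\sim\log r$, so $\rho(r)\to\infty$. Hence $\rho$ extends continuously to a positive function on $[r_0,\infty]$, taking the value $1$ at $r=1$ and $+\infty$ at $r=\infty$, and it attains a positive minimum on that compact interval. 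The only degenerate direction is $r\to0$, since $g(1/r)\to\infty$ much faster than $g(r)\to 1$; this is exactly what the lower bound $r_0>0$ excludes. The main point requiring care is precisely this bound: it needs both the shift/threshold $\delta>0$ on $\bx$ and the boundedness of $\cS$ from~\Cref{lem:sol set KL}. This also explains why plain Burg's entropy only works on $\R^N_{\geq\varepsilon}$ and why $\alpha(\varphi_\mathrm{B})=0$ globally. Finally, we check that the limit computations above are uniform in the sense needed, which follows because $\rho$ depends on the single variable $r$.
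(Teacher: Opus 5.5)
Your proof is correct. The first step, separability reducing everything to a scalar ratio, is the same as the paper's Lemma~\ref{lem:from_n_to_one}, but you handle the one-dimensional estimate by a genuinely different route.

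The paper treats the smoothed entropy $\phi_\xi$ as a two-variable problem:
\begin{itemize}
\item it shows by L'H\^opital that $y\mapsto d_{\phi_\xi}(x,y)/d_{\phi_\xi}(y,x)$ extends by $1$ on the diagonal;
\item it sets $m(x)=\min_{y\in\mathcal{S}}\Delta_x(y)$ and argues that $m$ is continuous and positive;
\item it proves $m(x)\to+\infty$ as $x\to+\infty$, using explicit bounds on numerator and denominator that are uniform in $y$ by compactness of $\mathcal{S}$;
\item it deduces the Burg case by shifting by $\varepsilon$ into the smoothed case with $\xi=\varepsilon$.
\end{itemize}

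You go the other way. You shift the smoothed case onto Burg's entropy and then use that Burg's divergence is invariant under joint rescaling. The ratio therefore becomes a function $\rho$ of the single variable $r=u/v\in[r_0,\infty)$ with $r_0>0$. The paper introduces the same variable, $t=(y+\xi)/(x+\xi)=1/r$, but only to compute the limit at the diagonal.

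What your route buys:
\begin{itemize}
\item It is shorter, and it needs no joint continuity of $(x,y)\mapsto\Delta_x(y)$ (which the paper tacitly uses to get continuity of $m$) and no uniform-in-$y$ bounds.
\item Solutions with zero coordinates need no special care, since $v=t+\xi\geq\xi$ automatically.
\item The constant becomes explicit. One can check that $\rho$ is nondecreasing on $(0,\infty)$, so your reasoning gives $\alpha\geq\rho(r_0)$ with $r_0=\xi/(\bar t+\xi)$ (resp.\ $\varepsilon/(\bar t+\varepsilon)$). This shows how the coefficient degenerates as $\xi\to 0$ or as $\mathcal{S}$ grows.
\end{itemize}

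What the paper's route buys is independence from the special homogeneity of the logarithm. It would therefore carry over more readily to other separable Legendre functions.

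One minor slip in your closing remark: as $r\to 0$, $g(r)=r-\log r-1$ tends to $+\infty$ like $\log(1/r)$, not to $1$. The conclusion $\rho(r)\sim r\log(1/r)\to 0$ is unaffected.
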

In the above proposition, ``over $\cB$'' for $\cB \subseteq \R^N$ should be understood as the definition of $\alpha_\cS(\varphi)$  in~\cref{eq:alpha(h)} with the constraint $\bx \in \mathrm{int}(\dom \varphi)$ replaced by $\bx \in \cB$. We defer its proof to~\Cref{sec:alphaKL_is_positive}. While the above result for $\varphi_{\ell_2}$ is trivial, the remaining cases are novel to this work. The only known result was the negative one that $\varphi_\mathrm{B}$ suffers from a total lack of symmetry over its domain (i.e., $\alpha(\varphi_\mathrm{B}) = 0$)~\cite{bauschke2017descent}. By restricting the computation of this symmetry coefficient to $\R_{\geq \varepsilon}^N$ (for $\bx$) and the compact set $\cS$ (for $\by$), we obtain a positive result. Finally, we describe the different relative $L$-smooth constants for our choices of $\varphi$.

\begin{proposition}[Relative smoothness]
\label{prop:rel_smooth_KL}
Let $F$ be the KL divergence  defined in~\cref{eq:fid_KL}. Then, we have that
\begin{itemize}
\item $F$ is $L$-smooth relative to $\varphi_{\mathrm{sB}}$ for any $L\geq\sum_m \omega_m^2 y_m$, with $\omega_m:=\max\{1,[\bA\boldsymbol{1}]_m \xi/b_m\}$, $m=1,\ldots, M$.
\item $F$ is $L$-smooth relative to $\varphi_\mathrm{B}$ for any $L\geq \sum_m y_m$.
\item $F$ is $L$-smooth (relative to $\varphi_{\ell_2}$) for any $L\geq \|\by/\mathbf{b}^2\|_\infty \|\mathbf{A}\|_2^2$.
\end{itemize}
\end{proposition}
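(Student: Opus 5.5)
We reduce each claim to a Hessian (Loewner order) comparison, since $F$ and all three $\varphi$ are $C^2$ on the relevant sets. The LC condition holds exactly when $L\varphi - F$ is convex on $\mathrm{int}(\dom F)\cap\mathrm{int}(\dom\varphi)$. This is equivalent to $\nabla^2F(\bx)\preceq L\nabla^2\varphi(\bx)$ there, because convexity of a $C^2$ function on an open convex set is equivalent to a positive semidefinite Hessian. For the KL term,
\begin{equation*}
\nabla^2 F(\bx)=\bA^T\mathrm{diag}\!\left(\frac{y_m}{([\bA\bx]_m+b_m)^2}\right)\bA ,
\end{equation*}
so for any $\bu\in\R^N$,
\begin{equation*}
\langle \nabla^2F(\bx)\bu,\bu\rangle=\sum_m y_m\frac{[\bA\bu]_m^2}{([\bA\bx]_m+b_m)^2}.
\end{equation*}
The plan is to bound each summand by a multiple of $\langle\nabla^2\varphi(\bx)\bu,\bu\rangle$, then sum over $m$.

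For $\varphi_{\ell_2}$ the argument is immediate. We use $\bx\geq0$, $\bA\geq0$ and $\bb>0$ to get $[\bA\bx]_m+b_m\geq b_m$. Each weight is then at most $\|\by/\bb^2\|_\infty$, and $\|\bA\bu\|_2^2\leq\|\bA\|_2^2\|\bu\|_2^2$ gives the constant. (This holds on $\cC$, which is the relevant domain.)

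For $\varphi_\mathrm{B}$, where $\nabla^2\varphi_\mathrm{B}(\bx)=\mathrm{diag}(1/x_n^2)$, we follow the argument of~\cite{bauschke2017descent}. First drop $b_m$ in the denominator. Then write
\begin{equation*}
\frac{[\bA\bu]_m}{[\bA\bx]_m}=\sum_n \lambda_{m,n}\frac{u_n}{x_n},\qquad \lambda_{m,n}:=\frac{a_{m,n}x_n}{[\bA\bx]_m},
\end{equation*}
a convex combination of the ratios $u_n/x_n$. Jensen then yields
\begin{equation*}
\frac{[\bA\bu]_m^2}{[\bA\bx]_m^2}\leq\sum_n\lambda_{m,n}\frac{u_n^2}{x_n^2}\leq\sum_n \frac{u_n^2}{x_n^2},
\end{equation*}
and summing with weights $y_m$ gives $L=\sum_m y_m$.

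For $\varphi_{\mathrm{sB}}$ the Hessian is $\mathrm{diag}(1/(x_n+\xi)^2)$, and the same Jensen trick is applied with the shifted variables $z_n:=x_n+\xi>0$. The extra step is to compare $[\bA\bx]_m+b_m$ with $[\bA\bz]_m=[\bA\bx]_m+\xi[\bA\boldsymbol 1]_m$. We claim
\begin{equation*}
[\bA\bz]_m\leq\omega_m([\bA\bx]_m+b_m).
\end{equation*}
If $\xi[\bA\boldsymbol1]_m\leq b_m$, then $\omega_m=1$ and the claim is immediate. Otherwise $\omega_m=\xi[\bA\boldsymbol1]_m/b_m>1$, and we use
\begin{equation*}
[\bA\bx]_m+\xi[\bA\boldsymbol 1]_m\leq\omega_m[\bA\bx]_m+\omega_m b_m .
\end{equation*}
Hence
\begin{equation*}
\frac{1}{([\bA\bx]_m+b_m)^2}\leq\frac{\omega_m^2}{[\bA\bz]_m^2}.
\end{equation*}
Jensen with weights $a_{m,n}z_n/[\bA\bz]_m$ then gives
\begin{equation*}
\frac{[\bA\bu]_m^2}{[\bA\bz]_m^2}\leq\sum_n \frac{u_n^2}{z_n^2}.
\end{equation*}
This covers all of $\cC$ and, with slightly more care, $\R^N_{>-\xi}$ wherever $\bA\bx+\bb>0$.

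The main obstacle is this sB case: we need a uniform comparison between the background-shifted denominator and the $\xi$-shifted one, with a constant independent of $\bx$. Getting that constant to be exactly $\omega_m$ requires the case split above on whether $\xi[\bA\boldsymbol1]_m\leq b_m$.
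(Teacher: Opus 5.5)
Your proposal is correct and is essentially the paper's proof. The paper also reduces to the Hessian comparison on $\R^N_{\geq 0}$. Its weighted Cauchy--Schwarz bound $[\bA\mathbf{d}]_m^2\leq[\bA(\bx+\boldsymbol{\xi})]_m\sum_n a_{m,n}d_n^2/(x_n+\xi)$, followed by $a_{m,n}(x_n+\xi)\leq[\bA(\bx+\boldsymbol{\xi})]_m$, is exactly your Jensen step, and its bound $g_m(t)=(t+[\bA\boldsymbol{\xi}]_m)/(t+b_m)\leq\omega_m$ for $t\geq 0$ is your case split. The $\varphi_{\mathrm{B}}$ case is likewise handled by dropping $b_m$ and invoking the Bauschke--Bolte--Teboulle argument; for $\varphi_{\ell_2}$ the paper only cites the known Lipschitz constant, which your direct bound reproduces.

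The one thing to drop is your aside that the $\varphi_{\mathrm{sB}}$ bound extends to $\R^N_{>-\xi}$ wherever $\bA\bx+\mathbf{b}>0$. When $\omega_m>1$, your step $[\bA\bx]_m\leq\omega_m[\bA\bx]_m$ needs $[\bA\bx]_m\geq 0$. Indeed, for $N=M=1$, $a=1$ and $\xi>b$ one has $F''(x)/\varphi_{\mathrm{sB}}''(x)=y(x+\xi)^2/(x+b)^2\to+\infty$ as $x\to-b^{+}$, so the restriction to $\cC$, as in the paper, is genuinely needed.
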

We defer the proof of the result to~\Cref{proof:rel_smooth_KL}. While the case $\varphi_{\mathrm{B}}$ is a direct extension of~\cite[Lemma 7]{bauschke2017descent}, the proof for $\varphi_{\mathrm{sB}}$ is slightly more involved. Finally, for $\varphi_{\ell_2}$, $L$ simply reduces to the Lipschitz gradient constant of $F$ in \eqref{eq:fid_KL}, given in~\cite[Lemma 1]{harmany2011spiral}.

\begin{table}[t]
\centering
\caption{Summary of linear convergence results (both theoretical and experimental) for different KL regression settings, depending on the rank of $\bA$ and the presence or absence of a regularizer. Note that, in the presence of a regularizer, we only discuss cases where the solution is unique, which is one of the typical roles of adding a regularizer. The notation $\cF_{\bA}^+(\bz) := (\bz + \ker(\bA)) \cap \cC$ denotes the nonnegative fiber of $\bA$ through~$\bz$, while $\Gamma_{\bA, \theta}^+(\bz)$ describes the nonnegative (double) cone with apex 
$\bz$, ``axis'' $\ker(\bA)$, and angle $\theta$. For theoretical linear convergence, \cmarkm~indicates that linear convergence is possible but may depend on the trajectory of iterates, while an absence of mark means that the setting is not covered by the theory. For experimental linear convergence, \cmarkm~indicates that both linear and sublinear convergence were observed, depending on the choice of the initial point, whereas \cmark~(resp., \xmark) indicates that linear convergence was always (resp., never) observed. Finally, the column ``Fig.'' refers to the corresponding figure in our experiments. We do not report here the case $\varphi_{\ell_2}$, as it only presents a local behavior, but integrate it in each figure.}
\label{tab:comparison}
\begin{tabular}{cclcccccc}
\toprule 
\toprule
&  \multirow{2}{*}{$\mathrm{rank}(\bA)$}& \multicolumn{1}{c}{\multirow{2}{*}{$\cS$}} & \multirow{2}{*}{$\cS_{\ker}^{\theta}$} & \multicolumn{2}{c}{Lin. cv. (th)} &  \multicolumn{2}{c}{Lin. cv. (exp)} & \multirow{2}{*}{Fig.}\\
& & & & $\varphi_{\mathrm{sB}}$ & $\varphi_{\mathrm{B}}$  & $\varphi_{\mathrm{sB}}$ & $\varphi_{\mathrm{B}}$ \\
\midrule
\multirow{3}{*}{\rotatebox[origin=c]{90}{$R = 0$}} &
 $< N$  & $\cF_{\bA}^+(\bar \bx)$ for $\bar \bx$ sol. & $\emptyset$ & \cmark & \cmarkm & \cmark & \cmarkm & \ref{fig:nfr_no_reg} \\
& $=N$      & $\{\bar \bx \} \subset \mathrm{int}(\cC) $  & $\emptyset$ & \cmark & \cmark & \cmark & \cmark & \ref{fig:fr_no_reg_int}  \\
& $=N$   & $\{\bar \bx \} \subset \partial \cC  $  & $\emptyset$  &  \cmark & & \cmark & \xmark  & \ref{fig:fr_no_reg_boundary} \\
\midrule
\multirow{4}{*}{\rotatebox[origin=c]{90}{$R \neq 0$}} &  $< N$  & $\{\bar \bx \} \subset \mathrm{int}(\cC ) $ &  $\Gamma_{\bA,\theta}^+(\bar \bx)  \setminus \{\bar \bx\}$  & \cmarkm & \cmarkm & \multicolumn{3}{c}{(see~\Cref{rem:NFR_reg_solIntC})}\\
& $<N$      & $\{\bar \bx \} \subset \partial \cC $ &  $\Gamma_{\bA,\theta}^+(\bar \bx) \setminus \{\bar \bx\}$ & \cmarkm & & \cmark & \xmark & \ref{fig:NFR reg}\\
& $=N$      & $\{\bar \bx \} \subset \mathrm{int}(\cC ) $ & $\emptyset$ & \cmark & \cmark & \cmark & \cmark & \ref{fig:fr_reg_int}\\
& $=N$   & $\{\bar \bx \} \subset \partial \cC  $  & $\emptyset$  &  \cmark  & & \cmark & \xmark & \ref{fig:fr_reg_boundary}\\
\bottomrule
\bottomrule
\end{tabular}
\end{table}

From the above three propositions, one can see that the linear convergence rate of~\eqref{eq:BPGM} for the considered KL regression problems depends on both $\bA$ and $\cS$. In \Cref{tab:comparison} (column ``Lin. cv. (th)''), we summarize (for $\varphi_\mathrm{sB}$ and $\varphi_{\mathrm{B}}$) the different situations and indicate whether linear convergence is either fully, or only partially, guaranteed by our theory, as this may depend on the trajectory of iterates. Indeed, as illustrated in Figure~\ref{fig:valid_regions}, the condition~\eqref{eq:rrsc} is not necessarily satisfied over the entire set $\cC$.  In such cases, linear convergence of \eqref{eq:BPGM} can only be expected if the iterates remain in the validity region. While for $\varphi_\mathrm{sB}$ linear convergence is systematically guaranteed for all cases where $\cS^{\theta}_{\ker} = \emptyset$, for $\varphi_\mathrm{B}$ the additional requirement that the solution $\bar \bx$, or the convergence point $\bx^\infty$ when multiple solutions exist, belongs to $\mathrm{int}(\cC)$ is necessary. Indeed, if the solution belongs to $\partial \cC$, there does not exist $\varepsilon >0$ such that it also belongs to $\mathcal{K}_\varepsilon$ in~\Cref{prop:rrsc for KL} and to $\R_{\geq \varepsilon}^N$ in \Cref{prop:restr_alpha_KL}. Similarly, if all iterates $\bx^k$ (or at least iterates after $K$ iterations) lie in the set $(\{\bar \bx\} + \ker(\bA))\cap \cC$, there does not exist $\theta \in (0,\pi/2)$ such that these iterates lie outside $\cS^{\theta}_{\ker}$, and we cannot guarantee linear convergence. However, this constitutes a very edge case that we did not manage to observe in our $\ell_1$-regularized numerical experiments in \Cref{sec:exp_comment}.

\begin{remark}[On the case $\mathbf{b} = \mathbf{0}$]
    Observing that, for any $\mathbf{b} > \mathbf{0}$, the following inequality holds:
$$
\sum_{m=1}^{M} \frac{y_{m}[\bA(\bx - \bar\bx_{\ell_2})]_m^2}{[\bA\bx]_m [\bA\bar\bx_{\ell_2}]_m}
\geq
\sum_{m=1}^{M} \frac{y_{m}[\bA(\bx - \bar\bx_{\ell_2})]_m^2}{([\bA\bx]_m + b_m)([\bA\bar\bx_{\ell_2}]_m + b_m)},
$$
we deduce that the results stated in \Cref{prop:rrsc for KL} (existence of a positive constant for~\eqref{eq:rrsc}) also hold for $\mathrm{KL}(\by, \bA \cdot)$ (unsmoothed). Given that the bound $L \geq \sum_m y_m$ for $\varphi_{\mathrm{B}}$ in \Cref{prop:rel_smooth_KL} remains valid when $\mathbf{b} = \mathbf{0}$, our analysis for $\varphi_{\mathrm{B}}$ directly applies to this unsmoothed case. Regarding $\varphi_{\mathrm{sB}}$, we observe that the bound on $L$ in \Cref{prop:rel_smooth_KL} tends to $+\infty$ as $\mathbf{b}$ tends to $\mathbf{0}$. Therefore, a dedicated analysis would be required to finely analyze this case.
\end{remark}

\subsection{Numerical experiments}\label{sec:experiments}
This section presents numerical experiments designed to to illustrate our theoretical findings regarding the convergence of~\eqref{eq:BPGM} with backtracking for Problem \eqref{eq:comp_obj} with the KL objective defined in~\eqref{eq:fid_KL} (including a possible $\ell_1$ regularizer). In this setting the~\eqref{eq:BPGM} iteration writes:
\begin{equation} \label{eq: BPGM iter implicit}
\bx^{k+1} = \argmin_{\bx \in \cC} \left<\mathbf{v}^k-\frac{1}{\tau^k}\nabla\varphi(\bx^k), \bx\right> + \frac{1}{\tau^k} \varphi(\bx) +\lambda\|\bx\|_1,
\end{equation}
where $\lambda\geq 0$ is a regularization parameter ($\lambda=0$ indicates  unregularized KL regression), $\tau^k>0$ is chosen according to the backtracking step-size rule provided in~\Cref{coro:lin_cv_backtrack} and, finally, $\mathbf{v}^k:=\bA^T\nabla \mathrm{KL}(\by,\bA\bx^k+\mathbf{b})$ stands for the gradient of $F$ in~\cref{eq:comp_obj}, with 
\begin{equation}
    v_n^k := \left\langle \mathbf{a}_n, \mathbf{1} - \frac{\mathbf{y}}{\mathbf{A}\mathbf{x}^k + \mathbf{b}} \right\rangle \label{eq: gradient of KL}
\end{equation}
for all $n=1,\ldots, N$.

\begin{remark}[Implicit regularization of unregularized KL regression]\label{rem:implicit_reg}
     When $M\ll N$ and $\lambda=0$, it has been shown in~\cite[Proposition 1]{Boyer2019} that, in this context, there is a solution to~\cref{eq:comp_obj} with $\cC = \R_{\geq 0}^N$ which is $M-$sparse; i.e., which has at most $M$ nonzero elements. This indicates that, in some sense, sparsity is induced from the non-negativity constraint itself, see also~\cite{Donoho2005}.
\end{remark}

We now provide an explicit expression of~\cref{eq: BPGM iter implicit} for each of the three different choices of $\varphi$ we consider (i.e., $\varphi_\mathrm{sB}$, $\varphi_\mathrm{B}$, and $\varphi_{\ell_2}$). Note that, a priori,~\cref{eq: BPGM iter implicit} requires to solve an inner optimization problem at every iteration, but for these three choices of $\varphi$ we have access to a closed-form solution. While for $\varphi_{\mathrm{sB}}$ the componentwise iteration of~\cref{eq: BPGM iter implicit} writes
\begin{equation}\label{iter with sBurg}
x^{k+1}_n=\max\left\{\frac{x_n^k+\xi}{1+\tau^k (v_n^k+\lambda)(x_n^k+\xi)}-\xi, 0\right\},
\end{equation}
the choice $\xi=0$ leads to the componentwise iteration of~\cref{eq: BPGM iter implicit} with $\varphi_{\mathrm{B}}$: 
\begin{equation}\label{iter with Burg}
x^{k+1}_n=\frac{x_n^k}{1+\tau^k (v_n^k+\lambda)x_n^k}.
\end{equation}
Notice that, opposed to the choice $\varphi_{\mathrm{sB}}$, the $\max\{\cdot, 0\}$ is not needed here as, for $\bx^0\in\R_{>0}^N$, all subsequent iterates $\bx^k$ remain in $\R_{>0}^N$, both with $\tau^k=\tau\leq 1/L$ 
or $\tau^k$ chosen according to the backtracking rule of~\Cref{coro:lin_cv_backtrack}. Finally, for $\varphi_{\ell_2}$ we have that
\begin{equation}\label{eq:projgd_reg}
        x^{k+1}_n = \max \left\{\mathrm{S}_{\tau^k \lambda} (x^{k}_n - \tau^k v_n^k), \, 0 \right\},
\end{equation}
where $\mathrm{S}_{\tau \lambda}(z):= \text{sign}(z) \max(|z| - \tau \lambda, 0)$ is the well-know Soft-Tresholding operator \cite{daubechies2004iterative}. Notice that, if $\lambda=0$, then clearly $\mathrm{S}_{0}=\mathrm{Id}$ and~\cref{eq:projgd_reg} simply becomes the projected gradient descent method. 

Finally, in each of the experiments we will include a comparison with the classical Richardson--Lucy (RL) algorithm~\cite{FI2011,lucy74,richardson1972bayesian}, which can be derived either as an Expectation-Maximization algorithm under a composite Poisson model or as a majorization-minimization algorithm using Jensen's inequality. The componentwise iteration is multiplicative, preserving the non-negativity at each iteration, and writes
\begin{equation}\label{eq:Rich_Lucy}
x_n^{k+1} =  \frac{x_n^k}{\ba_n^T \mathbf{1} + \lambda}\left\langle \mathbf{a}_n, \frac{\mathbf{y}}{\mathbf{A}\mathbf{x}^k + \mathbf{b}} \right\rangle  = \frac{x_n^k}{\ba_n^T \mathbf{1} + \lambda} (\ba_n^T \mathbf{1} - v_n^k)
\end{equation}
for all $n=1,\ldots, N$.

In the following, we provide further details on how we generate the data, we discuss the effect of the smoothness parameter $\xi>0$ in $\varphi_{\mathrm{sB}}$ on the convergence of~\cref{iter with sBurg} and, finally, we will provide numerical evidence to each of the scenarios described in \Cref{tab:comparison} by varying over 1) the rank of the matrix $\bA$, 2) the presence (or not) of the $\ell_1$-regularizer, and 3) the trajectory of iterates through the selection of specific initial points. \\

\paragraph{\textbf{Data Generation}} \label{sec: data gen}
To evaluate our approach, we consider a synthetic setup within the context of Poisson inverse problems. We set the signal dimension to $N=500$ and the number of measurements to $M \leq N$. The data $\by \in \mathbb{R}^M$ is generated according to the following model:
\begin{equation}\label{eq:PoissModel numerics}
    \by \sim \mathcal{P}(\by_0)/Q, \quad \text{with } \by_0 = Q\bA \bx^* + \mathbf{b}, 
\end{equation}
where $\mathbf{b} > \boldsymbol{0}$ is a background signal with $b_m = 10^{-3}$ for all $m=1,\ldots,M$, and $Q>0$ a gain factor that we fix to $Q=100$.  The ground-truth vector $\bx^* \in \mathbb{R}^N$ is drawn from a uniform distribution, such that $x_n^* \sim \cU(0, 1)$ for all $n=1,\ldots,N$. The matrix $\bA$ is constructed by drawing its entries from a uniform distribution, i.e.  $a_{m,n} \sim \cU(0, 1)$ and then rescaling its singular values to get a better condition number. Since the constants in both the sublinear and linear convergence rates depend on the condition number of $\bA$, this normalization allows us to illustrate these behaviors in fewer iterations, always reaching machine precision when linear convergence holds. Further details on the procedure can be found in \Cref{app: construction of A}.

For each experiment, we perform $30$ independent trials using different random initializations $\bx^0 \in \cC \cap \mathrm{int}(\dom\varphi)$, where $x_n^0 \sim \cU(10^{-3}, 10^4)$ for all $n=1,\ldots,N$. In each of the different convergence plots, solid lines represent the median behavior across these $30$ trials, while the shaded regions indicate the $1^{\mathrm{st}}$ and $99^{\mathrm{th}}$ percentiles. Finally, the optimal functional value $\bar J$ is set as the minimum value achieved among all evaluated algorithms at the final iteration.\\

\paragraph{\textbf{Code details:}} All simulations were implemented in Python and run on a MacBook Pro equipped with an Apple M4 Pro chip and 24 GB of RAM The code is publicly available at \url{https://github.com/christiandaniele/Linear_convergence_BPGM}. 

\begin{figure}[t]
    \centering
    \includegraphics[width=0.8\linewidth]{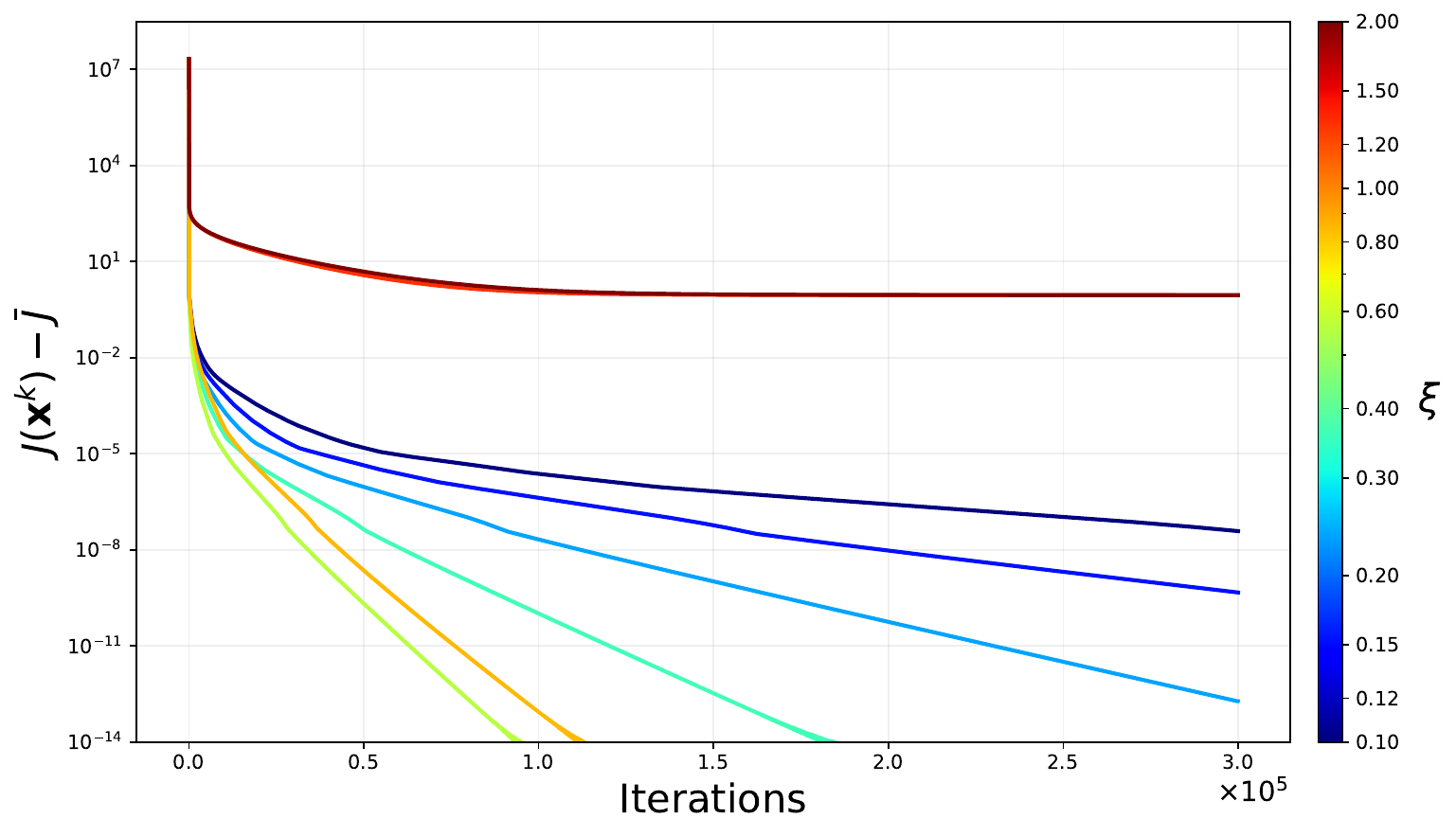}
    \caption{Convergence plots of~\Cref{eq:BPGM} with $\varphi_{\mathrm{sB}}$ for different values of $\xi$. Observe that the linear convergence gets faster when increasing $\xi$  until the optimal value $\xi\sim0.6$ in this case. After, the convergence of~\Cref{eq:BPGM} degrades when continuing to increase $\xi$.
    }
    \label{fig:comparison_xi}
\end{figure}

\subsubsection{\texorpdfstring{Effect of the smoothness parameter in $\varphi_{\mathrm{sB}}$}{Effect on the smoothing parameter in varphisB}}  In~\Cref{fig:comparison_xi}, we evaluate the impact of the smoothing parameter $\xi>0$ on the convergence of~\cref{eq:BPGM} in, for simplicity, the unregularized full-rank setting ($M=N$, $\lambda=0$). The results illustrate a clear trade-off, as extreme values of $\xi$ may degrade the convergence speed. This behavior perfectly aligns with our theoretical findings. While a larger $\xi$ favorably increases the restricted relative strong convexity constant $\mu$, we see that $\mu$ cannot grow arbitrarily: from the proof of~\Cref{prop:rrsc for KL} for $\varphi_{\mathrm{sB}}$, we have that
$$
\mu\geq \min\left\{c_1\xi^2, \frac{c_2\xi}{c_3+\xi}\right\},
$$
for some constants $c_1, c_2,c_3>0$. Looking at the right hand side above as a function of $\xi$, $g_\mu(\xi)$, we observe that both of the components inside the minimum are strictly increasing and, thus, $\sup_{\xi > 0} g_\mu(\xi) = \lim_{\xi \to \infty} g(\xi) = c_2$. In this way, we have shown that the lower bound of $\mu$ is globally bounded. However, from~\Cref{prop:rel_smooth_KL} for $\varphi_{\mathrm{sB}}$ we see that
$$
L\geq \max\{c_4,c_5\xi^2\},
$$
for some $c_4, c_5>0$. As for $\mu$, the right hand side above as a function of $\xi$, $g_L(\xi)$, is strictly increasing, but $\sup_{\xi>0}g_L(\xi)=+\infty$. Hence, $\xi$ increases the relative smoothness constant $L$ to $+\infty$, which forces a smaller stepsize and worsens the linear convergence rate. It is therefore expected to observe that, after a while, increasing $\xi$ does not bring anything for $\mu$ and continues degrading $L$. To balance these competing effects, we systematically fine-tune $\xi$, among $30$ different values, to achieve optimal performance in all subsequent experiments. We finally point out that, although the theoretical rate of convergence does also depend on the symmetry coefficient $\alpha_\cS(\varphi)$ which may itself vary with $\xi$, it seems, from our experiments that it does not impact the general behavior we described from the analysis of $\mu$ and $L$.


\subsubsection{Adequacy between theoretical findings and numerical experiments.}\label{sec:exp_comment} 
We next demonstrate the close alignment between our theoretical findings and the numerical results. Overall, our empirical setup successfully validates the predicted convergence behaviors detailed in~\Cref{tab:comparison}, column ``Lin. cv. (exp)''.

\begin{figure}[t]
    \centering
    \begin{subfigure}[b]{0.48\textwidth}
        \centering
        \includegraphics[width=\textwidth]{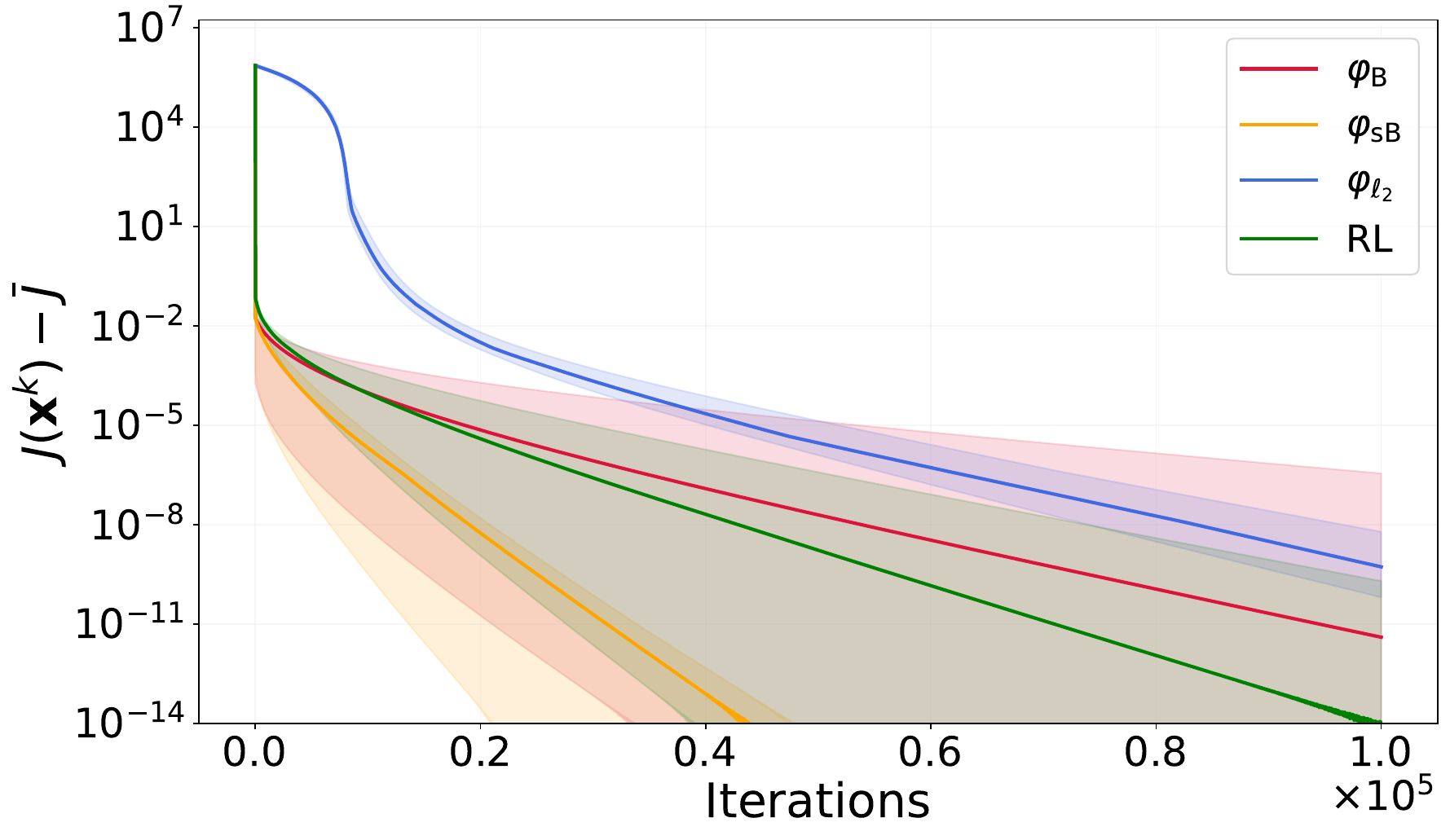} 
        \caption{Different choices of $\varphi$ and RL.}
        \label{fig:nfr_no_reg_int}
    \end{subfigure}
    \hfill 
    \begin{subfigure}[b]{0.48\textwidth}
        \centering
        \includegraphics[width=\textwidth]{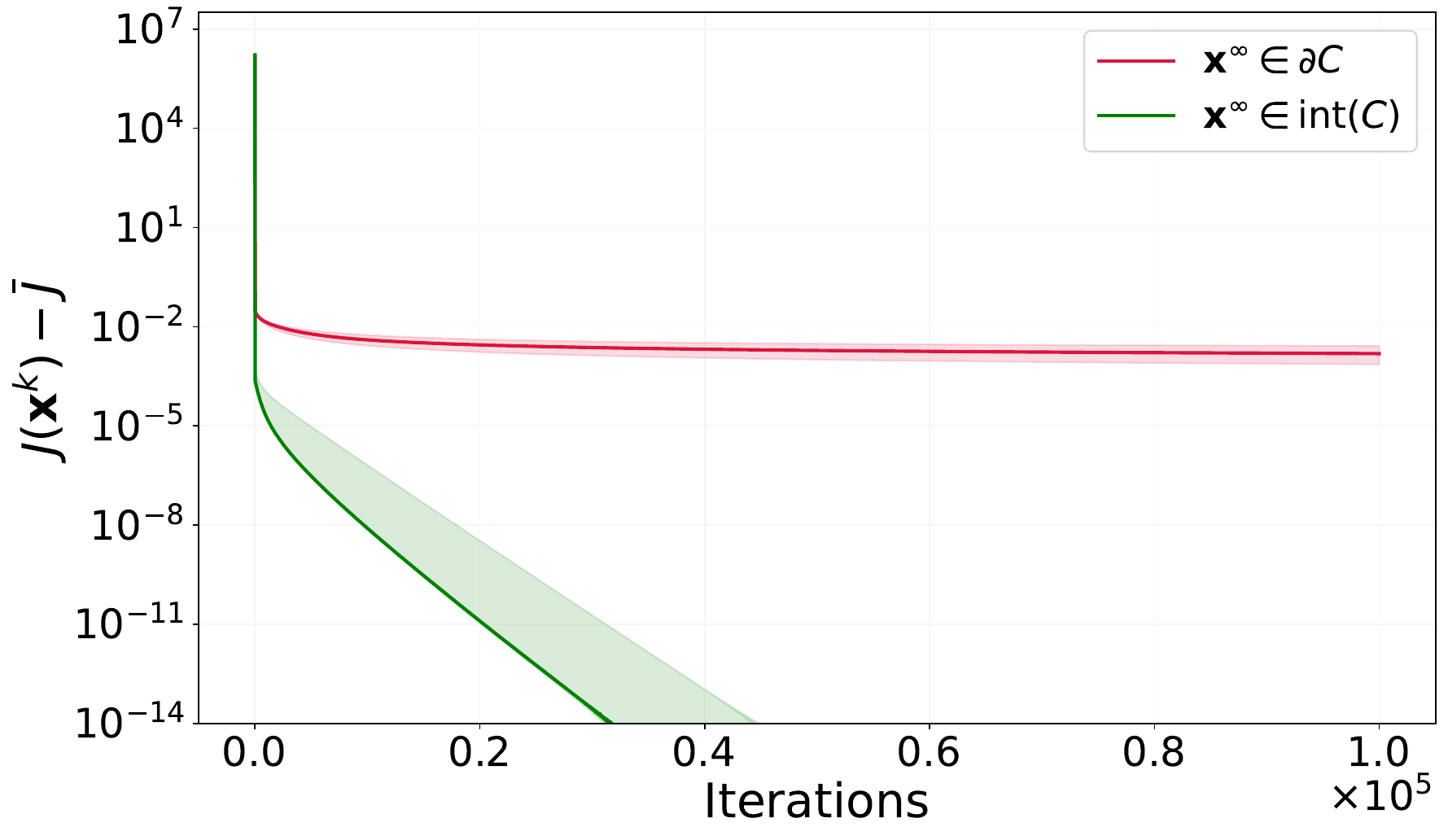} 
        \caption{Focus on $\varphi_{\mathrm{B}}$.}
        \label{fig:nfr_no_reg_boundary}
    \end{subfigure}
    
    \caption{On the left, we plot the convergence of~\cref{eq:BPGM}   with backtracking and the three choices of $\varphi$, as well as the RL algorithm, for unregularized KL in the case $N=500$, $M=400$. On the right, we focus on $\varphi_{\mathrm{B}}$, where we separate the cases (among the 30 initial points considered to make the left graph) in which $\bx^\infty \in  \partial\cC$ or $\bx^\infty \in  \mathrm{int}(\cC)$: while only sublinear convergence is observed when $\bx^\infty \in \partial\cC$, we always observe linear convergence when $\bx^\infty \in  \mathrm{int}(\cC)$.}
    \label{fig:nfr_no_reg}
\end{figure}

\begin{figure}[t]
    \centering
    \begin{subfigure}[b]{0.48\textwidth}
        \centering
        \includegraphics[width=\textwidth]{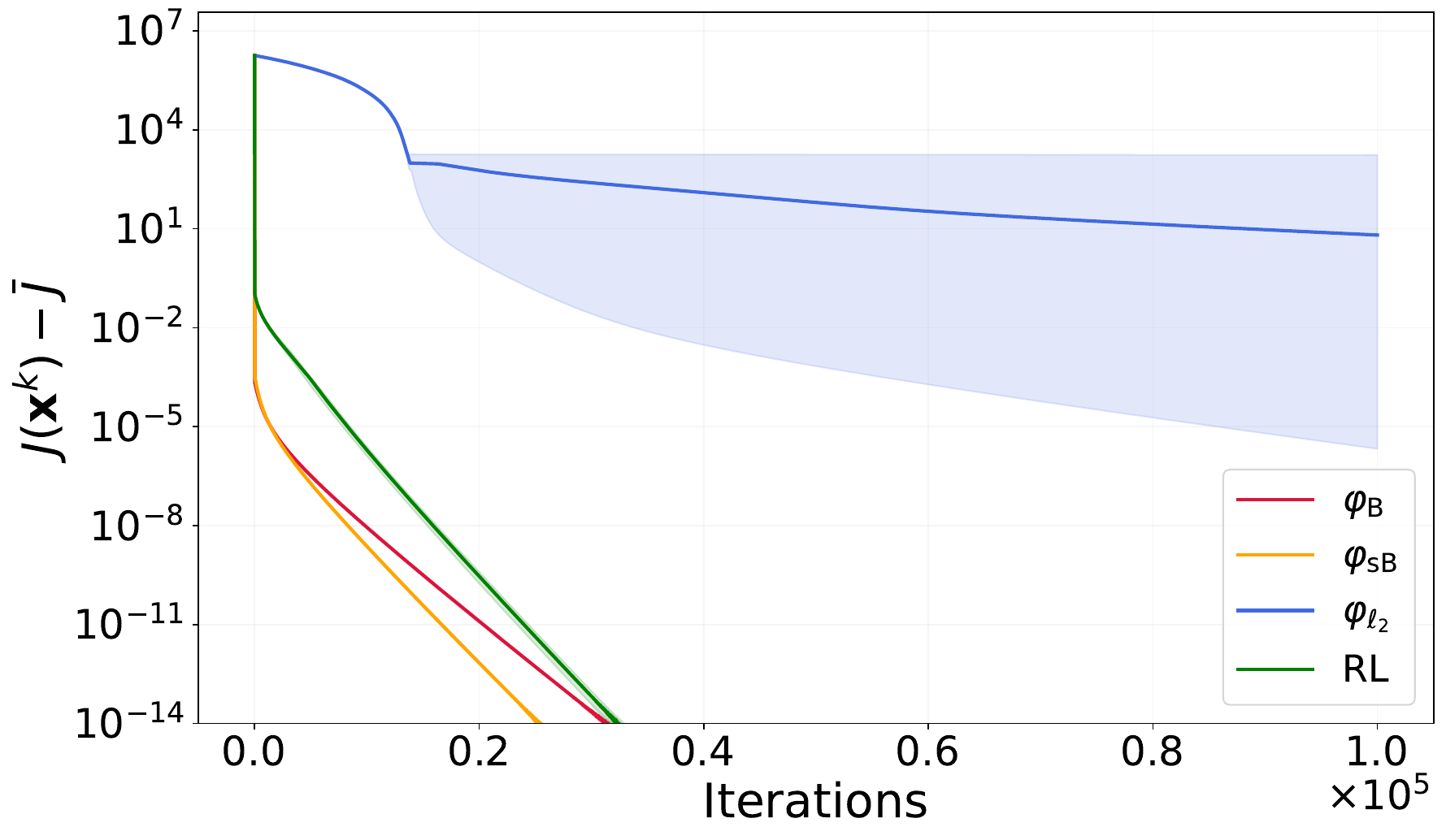} 
        \caption{$\bar\bx \in \mathrm{int}(\cC)$}
        \label{fig:fr_no_reg_int}
    \end{subfigure}
    \hfill 
    \begin{subfigure}[b]{0.48\textwidth}
        \centering
        \includegraphics[width=\textwidth]{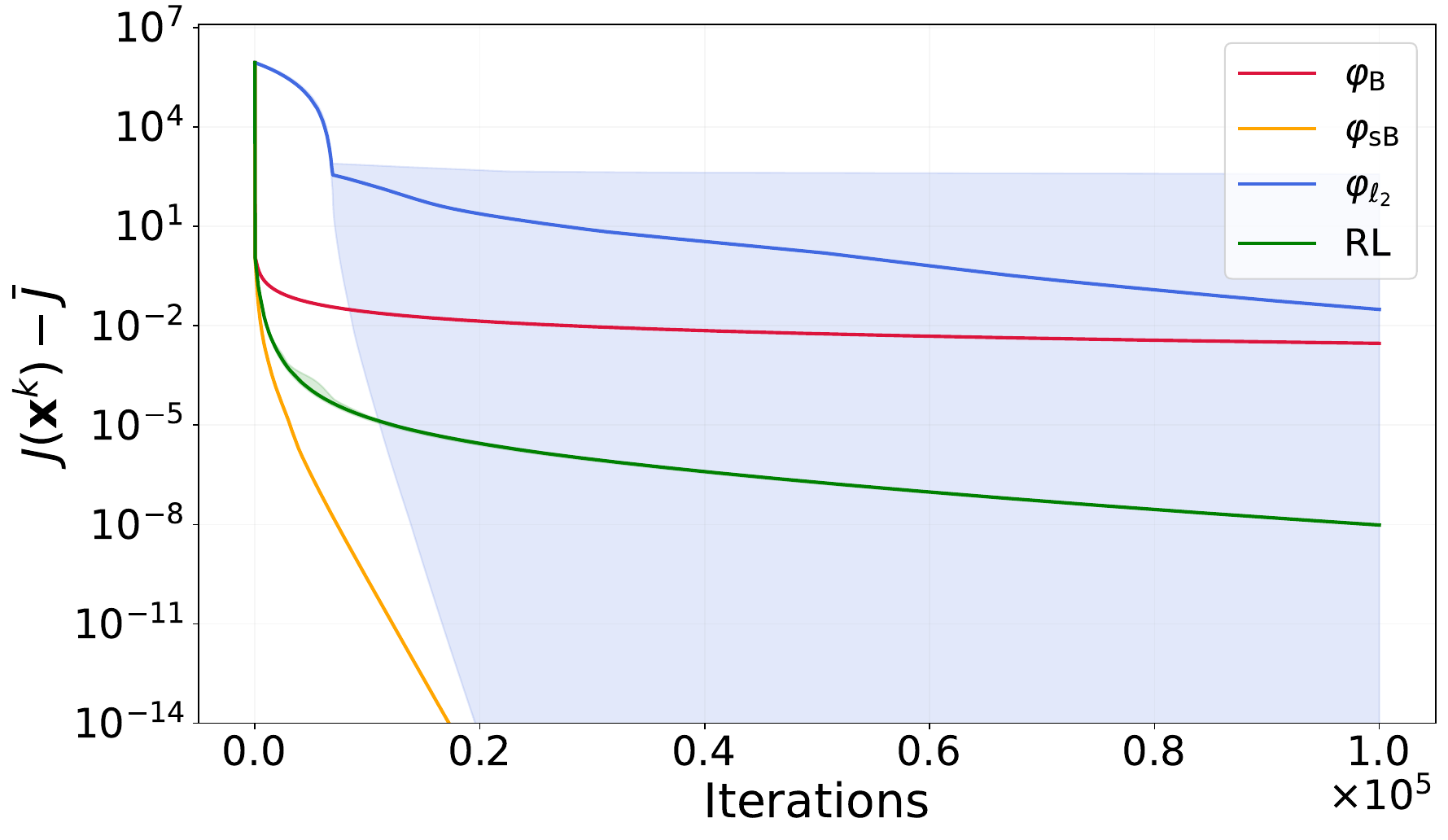} 
        \caption{$\bar\bx \in \partial \cC$}
        \label{fig:fr_no_reg_boundary}
    \end{subfigure}
    
    \caption{Convergence plots of~\cref{eq:BPGM}   with backtracking and the three choices of $\varphi$, as well as the RL algorithm, for unregularized KL in the case $N=M=500$. On the left ($\bar\bx\in\mathrm{int}(\cC)$), we observe linear convergence both for $\varphi_{\mathrm{B}}$ and $\varphi_{\mathrm{sB}}$ (and RL). On the right ($\bar\bx\in\partial\cC$), linear convergence is only observed for $\varphi_{\mathrm{sB}}$. In order to generate data such that $\bar{\bx} \in \operatorname{int}(\cC)$ we followed \Cref{app: generation sol in int C in unreg case}.}
    \label{fig:fr_no_reg}
\end{figure}

On the one hand,~\Cref{fig:nfr_no_reg,fig:fr_no_reg} present the results for unregularized KL problems, distinguishing between rank-deficient ($M<N$, infinity of solutions) and full rank ($M=N$, unique solution) regimes. As predicted by the theory, we systematically observe linear convergence for $\varphi_\mathrm{sB}$, regardless of whether the solution is unique or not. The situation differs for $\varphi_\mathrm{B}$: as expected from theoretical results, we observe linear convergence when the solution $\bar \bx$ (if unique) or the limit point $\bx^\infty$ (if the solution is non-unique) lies in $\mathrm{int}(\cC)$. Conversely, when $\bar \bx$ or $\bx^\infty$ lies on $\partial \cC$, we systematically observe a sublinear convergence behavior for $\varphi_\mathrm{B}$, thus confirming the sharpness of our theoretical analysis. Indeed, in these settings, we were not able to prove that \eqref{eq:rrsc} holds, and the observed empirical behavior confirms that this was not a gap in our theoretical results.

On the other hand,~\Cref{fig:NFR reg,fig:fr_reg} investigate the $\ell_1$-regularized setting. When $\bA$ does not have full rank, the KL divergence itself lacks a unique minimizer and, for these experiments, we only consider situations in which the composite objective $\mathrm{KL}(\by,\bA\cdot+\mathbf{b})+\lambda\|\cdot\|_1$ admits a  unique minimizer $\bar\bx$  (note that this is not always the case as the $\ell_1$-norm is not strictly convex). When $\cS_{\ker}^\theta = \emptyset$, the same observations as in the unregularized setting hold, and once again, they align perfectly with the theory. The novel situations here are those where $\cS_{\ker}^\theta \neq \emptyset$. In particular, as depicted in \Cref{fig:valid_regions}, the setting of~\Cref{fig:NFR reg} is such that $\cS_{\ker}^0=\Gamma^+_{\bA,0}(\bar\bx)\setminus\{\bar\bx\} \neq \emptyset$, being $\bar\bx$ the unique solution of our $\ell_1$-regularized KL problem. As already anticipated at the end of~\Cref{sec:KL_theory}, we cannot guarantee linear convergence only if iterates (or iterates after $K$ iterations) remain in $\Gamma^+_{\bA,0}(\bar\bx)$. This seems to be very unlikely since, as it can be seen in~\Cref{fig:NFR reg}, we systematically observed linear convergence in our experiments.

\begin{figure}[t]
    \centering
    \includegraphics[width=0.8\linewidth]{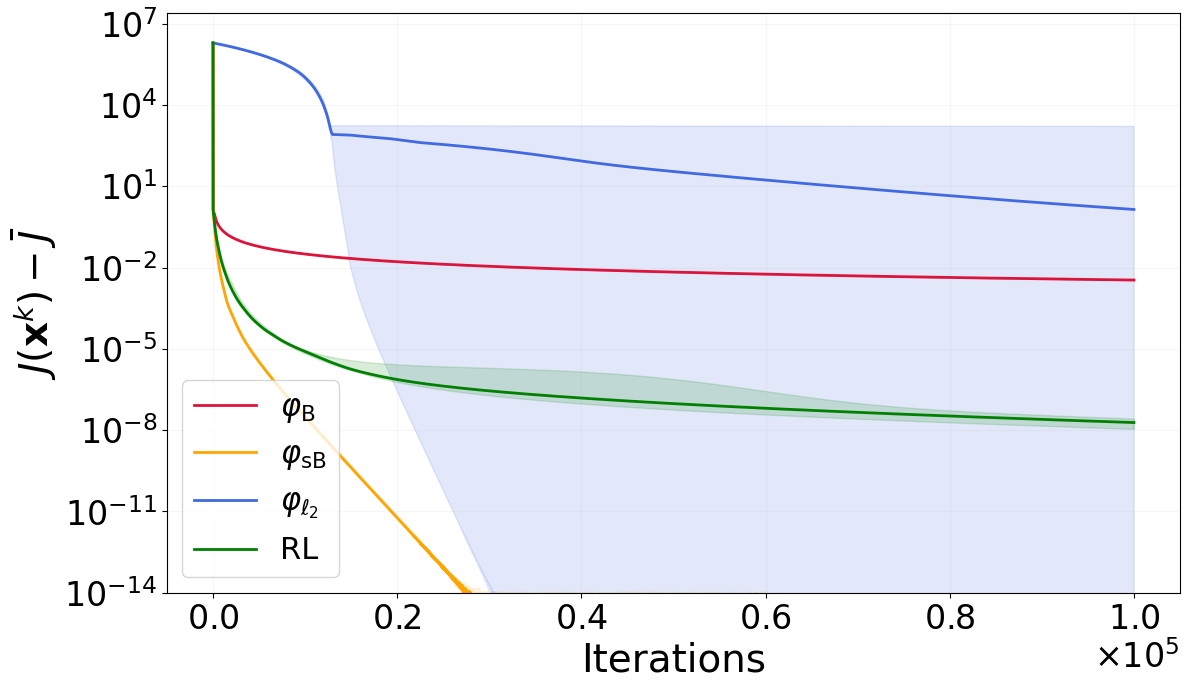}
    \caption{Convergence of~\cref{eq:BPGM}   with backtracking and the three choices of $\varphi$, as well as  the RL algorithm, for $\ell_1$-regularized KL in the case $N=500, \ M=400$ and $\bar\bx \in \partial \cC$. We only observe linear converge for $\varphi_{\mathrm{sB}}$. }
    \label{fig:NFR reg}
\end{figure}

Finally,   across all convergence plots, the standard proximal gradient update with backtracking (corresponding to~\cref{eq:BPGM}  with backtracking and $\varphi_{\ell_2}$) exhibits a strong local behavior, meaning that its convergence speed is highly sensitive to the initialization $\bx^0$ (see also~\Cref{fig:valid_regions}). This is again in line with our theoretical analysis.  

\begin{remark}\label{rem:NFR_reg_solIntC}
    Note that, for the $\ell_1$-regularized setting, we omitted experiments for the rank-deficient scenario with a unique $\bar\bx\in\mathrm{int}(\cC)$, as this is only possible under the very restrictive condition $M = N-1$. To see this, let $\bar \bx \in \mathrm{int}(\cC)$ be the solution of the $\ell_1$-regularized KL problem. Take a direction $\mathbf{d} \in (\ker(\bA) \cap \mathbf{1}^\perp)\setminus \{\mathbf{0}\}$ and define $\bz = \bar \bx + t \mathbf{d} \in \mathrm{int}(\cC)$ (for $|t|$ small  to remain in $\mathrm{int}(\cC)$). Then we have both $\bA \bz = \bA \bar \bx$ (as $\mathbf{d} \in \ker(\bA)$) and $\|\bz\|_1 = \left<\mathbf{1}, \bz\right> = \|\bar \bx\|_1 + t \left<\mathbf{1}, \mathbf{d}\right> =  \|\bar \bx\|_1$ (as $\mathbf{d} \in \mathbf{1}^\perp$). It follows that $\bz$ is also a minimizer of the problem and that $\bar \bx$ is not unique. For $\bar \bx$ being the unique minimizer we need to have $\ker(\bA) \cap \mathbf{1}^\perp = \{\mathbf{0}\}$ which, given $\dim(\mathbf{1}^\perp) = N-1$, implies $\dim(\ker(\bA)) \leq 1$ and thus $M\geq N-1$.
\end{remark}

\begin{figure}[t]
    \centering
    \begin{subfigure}[b]{0.48\textwidth}
        \centering
        \includegraphics[width=\textwidth]{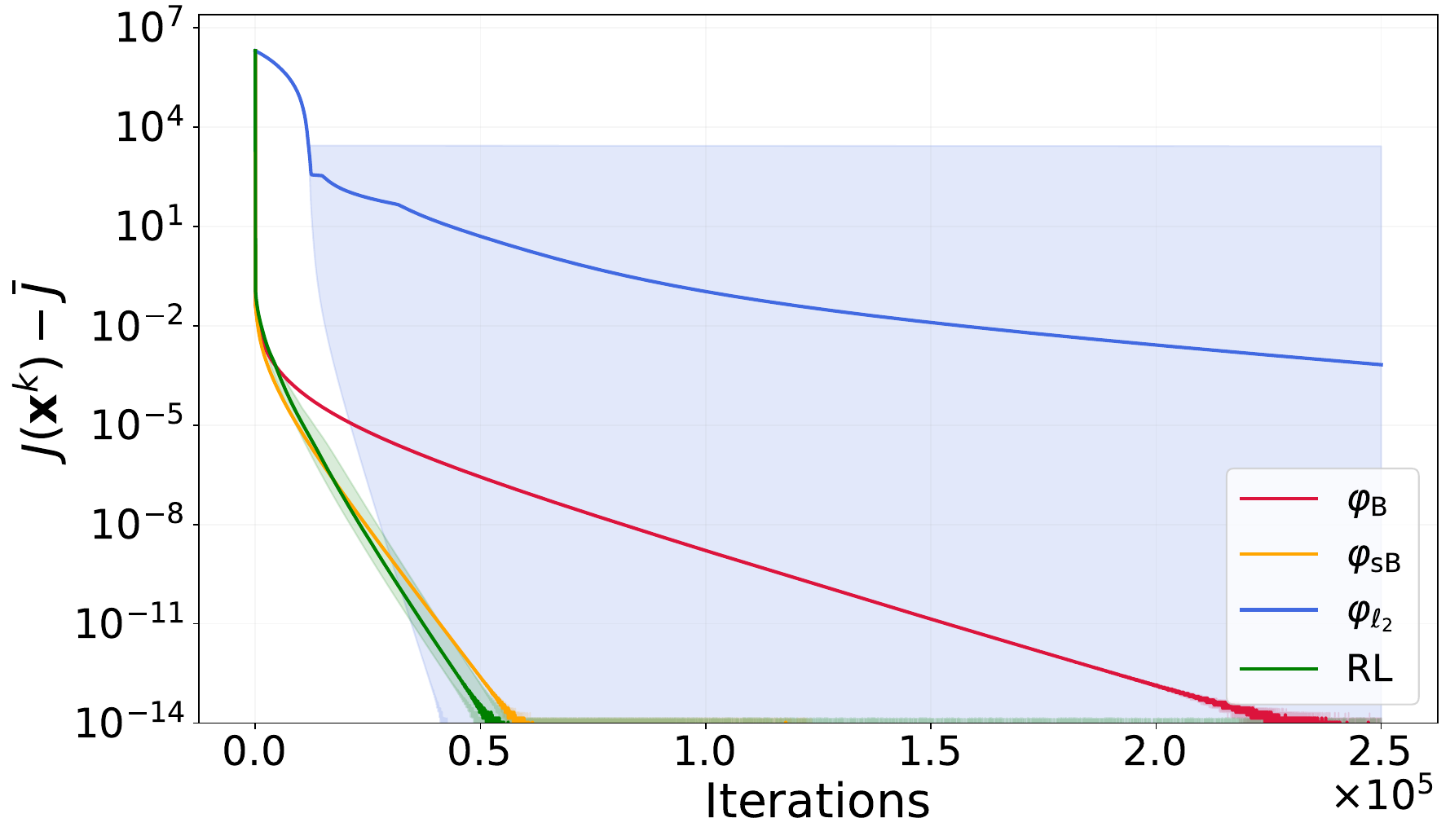} 
        \caption{$\bar\bx \in \mathrm{int}(\cC)$}
        \label{fig:fr_reg_int}
    \end{subfigure}
    \hfill 
    \begin{subfigure}[b]{0.48\textwidth}
        \centering
        \includegraphics[width=\textwidth]{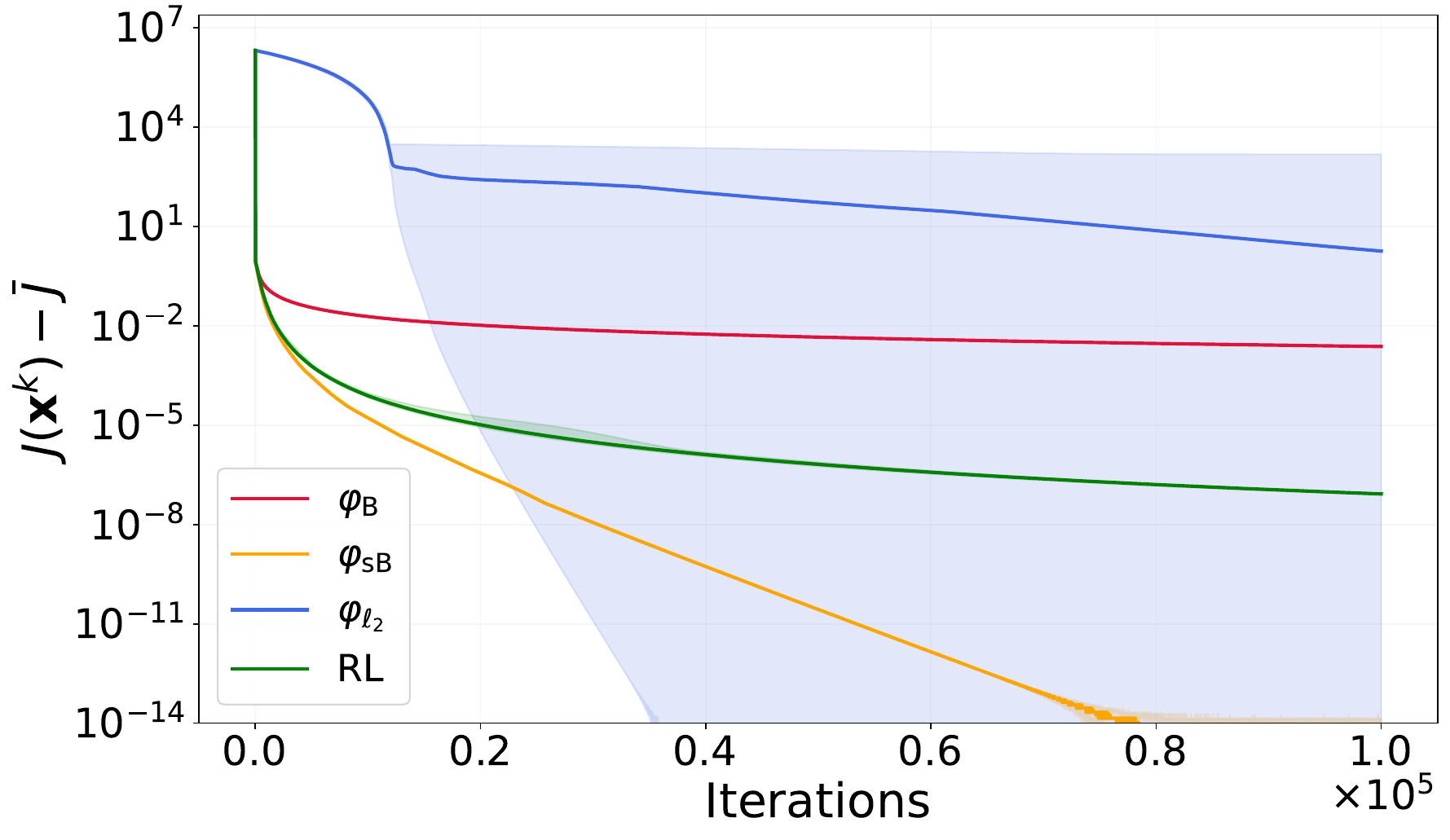} 
        \caption{$\bar\bx \in \partial \cC$}
        \label{fig:fr_reg_boundary}
    \end{subfigure}
    
    \caption{Convergence of~\cref{eq:BPGM}  with backtracking and the three choices of $\varphi$, as well as the RL algorithm, for $\ell_1$-regularized KL in the case $N=M=500$. On the left ($\bar\bx\in\mathrm{int}(\cC)$), we observe linear convergence both for $\varphi_{\mathrm{B}}$ and $\varphi_{\mathrm{sB}}$ (and RL). On the right ($\bar\bx\in\partial\cC$), linear convergence is only observed for $\varphi_{\mathrm{sB}}$. In order to generate data such that $\bar{\bx} \in \operatorname{int}(\cC)$ we followed \Cref{app: generation sol in int C in reg case}.}
    \label{fig:fr_reg}
\end{figure}

\subsubsection{Observations regarding practical performance} Our numerical experiments also provide a practical comparison of the evaluated algorithms. First, we point out that, across each figure, RL consistently beats (or has a comparable behavior in very restrictive cases such as that of~\Cref{fig:fr_no_reg_int})~\cref{eq:BPGM} with $\varphi_{\mathrm{B}}$. Such experimental remark has already been observed in other applications such as nonnegative matrix factorization~\cite{Hien2021}. This is not anymore the case with $\varphi_{\mathrm{sB}}$. Notably, \cref{eq:BPGM} with $\varphi_{\mathrm{sB}}$ outperformed RL in almost every scenario, either by exhibiting a faster linear convergence rate (see \Cref{fig:nfr_no_reg,fig:fr_no_reg_int}) or by achieving linear convergence where RL degraded to a sublinear rate (see \Cref{fig:fr_no_reg_boundary,fig:NFR reg,fig:fr_reg_boundary}). In the single instance where RL converged faster (\Cref{fig:fr_reg_int}), the overall performance remained comparable. We do note, however, a practical trade-off: while RL and~\cref{eq:BPGM} with $\varphi_{\mathrm{B}}$ are tuning-free, the $\varphi_{\mathrm{sB}}$ formulation requires selecting the smoothness parameter $\xi$. Finally, our varied initializations highlight the local nature of \cref{eq:BPGM} with $\varphi_{\ell_2}$ (standard PGM with backtracking). When initialized sufficiently close to the solution (i.e., the lowest curve of the shaded area), it performed quite well---achieving parity with the $\varphi_{\mathrm{sB}}$ variant in several cases (see \Cref{fig:fr_no_reg_boundary,fig:NFR reg,fig:fr_reg_int})--- and strictly outperforming all other algorithms in \Cref{fig:fr_reg_boundary}.

\section{Conclusions}
In this work, we established novel sufficient conditions to guarantee linear convergence of BPGM by introducing the Restricted Relative Strong Convexity (RRSC) property: by requiring relative strong convexity only between a point and its Euclidean projection onto the solution set, we significantly relax global assumptions previously required in the literature. 
We demonstrated the practical utility of this theoretical framework for KL regression problems. A key finding of our analysis highlights the limitations of the use of standard Burg's entropy as a distance generating function, which restricts linear convergence to cases where the solution set lies in the interior of the domain. To overcome this, we proposed to consider instead the smoothed Burg's entropy, and showed that it provides the appropriate geometric properties to satisfy the RRSC condition, ensuring linear convergence across a broader set of scenarios. Finally, our theoretical findings were thoroughly validated through numerical experiments
for both unregularized and $\ell_1$-regularized KL regression tasks. Moving forward, our theoretical framework opens several promising avenues for research. For instance, extending the applicability of our framework to other loss functions, such as general $\beta$-divergences, would significantly broaden its impact in inverse problems.

\section{Acknowledgements}
J.C.-R., C.F. and E.S. acknowledge support from the ANR EROSION project (ANR-22-CE48-0004) and the Toulouse AI cluster ANITI (ANR-23-IACL-0002). C. D. acknowledges support from the European Research Council (project MALIN, grant
101117133).
\bibliographystyle{plain}{}
\bibliography{sample}

@inproceedings{gopalan2015scalable,
  title={Scalable recommendation with hierarchical poisson factorization.},
  author={Gopalan, P. and Hofman, J. M. and Blei, D. M.},
  booktitle={Proc.~UAI},
  pages={326--335},
  year={2015}
}

@Article{lucy74,
  author    = {Lucy, L. B.},
  journal   = {Astronomical Journal},
  title     = {An iterative technique for the rectification of observed distributions},
  year      = {1974},
  pages     = {745--754},
  volume    = {79},
  doi       = {10.1086/111605},
}

@InProceedings{can04,
  author    = {Canny, J. F.},
  booktitle = {Proc.~ACM International Conference on Research and Development of Information Retrieval (SIGIR)},
  title     = {{G}a{P}: A Factor Model for Discrete Data},
  year      = {2004},
  pages     = {122--129},
}

@article{lu2018relatively,
  title={Relatively smooth convex optimization by first-order methods, and applications},
  author={Lu, H. and Freund, R. M. and Nesterov, Y.},
  journal={SIAM Journal on Optimization},
  volume={28},
  number={1},
  pages={333--354},
  year={2018},
  publisher={SIAM}
}

@article{lions1979,
  title={Splitting algorithms for the sum of two nonlinear operators},
  author={Lions, P.-L. and Mercier, B.},
  journal={SIAM Journal on Numerical Analysis},
  volume={16},
  number={6},
  pages={964--979},
  year={1979},
  publisher={SIAM}
}

@article{Bertero2009,
  doi = {10.1088/0266-5611/25/12/123006},
  year = {2009},
  month = {nov},
  publisher = {},
  volume = {25},
  number = {12},
  pages = {123006},
  author = {Bertero, M. and Boccacci, P. and Desiderà, G. and Vicidomini, G.},
  title = {Image deblurring with Poisson data: from cells to galaxies},
  journal = {Inverse Problems}
}

@book{Bertero2018,
  author = {Bertero, M. and Boccacci, P. and Ruggiero, V.},
  title = {Inverse Imaging with Poisson Data},
  publisher = {IOP Publishing},
  year = {2018},
  series = {2053-2563},
  isbn = {978-0-7503-1437-4},
  url_ = {https://doi.org/10.1088/2053-2563/aae109},
  doi = {10.1088/2053-2563/aae109}
}

@article{Bauschke2019,
  title={On linear convergence of non-{E}uclidean gradient methods without strong convexity and {L}ipschitz gradient continuity},
  author={Bauschke, H. H. and Bolte, J. and Chen, J. and Teboulle, M. and Wang, X.},
  journal={Journal of Optimization Theory and Applications},
  volume={182},
  number={3},
  pages={1068--1087},
  year={2019},
  publisher={Springer}
}

@inproceedings{Bartlett2007,
  author = {Bartlett, P. and Hazan, E. and Rakhlin, A.},
  booktitle = {Advances in Neural Information Processing Systems},
  editor_ = {J. Platt and D. Koller and Y. Singer and S. Roweis},
  pages = {},
  publisher_ = {Curran Associates, Inc.},
  title = {Adaptive Online Gradient Descent},
  volume = {20},
  year = {2007}
}

@inproceedings{liu2014,
  title={An asynchronous parallel stochastic coordinate descent algorithm},
  author={Liu, J. and Wright, S. and R{\'e}, C. and Bittorf, V. and Sridhar, S.},
  booktitle={International Conference on Machine Learning},
  pages={469--477},
  year={2014},
  organization={PMLR}
}

@ARTICLE{FI2011,
  author={Févotte, C. and Idier, J.},
  journal={Neural Computation}, 
  title={Algorithms for Nonnegative Matrix Factorization with the {$\beta$}-Divergence}, 
  year={2011},
  volume={23},
  number={9},
  pages={2421--2456},
  doi={10.1162/NECO_a_00168}
}

@article{Passty1979,
  title = {Ergodic convergence to a zero of the sum of monotone operators in Hilbert space},
  journal = {Journal of Mathematical Analysis and Applications},
  volume = {72},
  number = {2},
  pages = {383--390},
  year = {1979},
  issn = {0022-247X},
  doi = {10.1016/0022-247X(79)90234-8},
  url_ = {https://www.sciencedirect.com/science/article/pii/0022247X79902348},
  author = {Passty, G. B.}
}

@inproceedings{Gha2014,
  author = {Needell, D. and Srebro, N. and Ward, R.},
  booktitle = {Advances in Neural Information Processing Systems},
  pages = {},
  publisher_ = {Curran Associates, Inc.},
  title = {Stochastic Gradient Descent, Weighted Sampling, and the Randomized {K}aczmarz algorithm},
  url_ = {https://proceedings.neurips.cc/paper_files/paper/2014/file/b3310bba2be31e673a7ded3386994599-Paper.pdf},
  volume = {27},
  year = {2014}
}

@article{Zhou2019,
  title={A simple convergence analysis of {B}regman proximal gradient algorithm},
  author={Zhou, Y. and Liang, Y. and Shen, L.},
  journal={Computational Optimization and Applications},
  volume={73},
  number={3},
  pages={903--912},
  year={2019},
  publisher={Springer}
}

@article{Bredies_2008,
  title={Linear Convergence of Iterative Soft-Thresholding},
  volume={14},
  issn={1531-5851},
  url_={http://dx.doi.org/10.1007/s00041-008-9041-1},
  doi={10.1007/s00041-008-9041-1},
  number={5-6},
  journal={Journal of Fourier Analysis and Applications},
  publisher={Springer Science and Business Media LLC},
  author={Bredies, K. and Lorenz, D. A.},
  year={2008},
  month_=Oct, 
  pages={813--837} 
}

@article{ChenTeboulle1993,
  author = {Chen, G. and Teboulle, M.},
  title = {Convergence Analysis of a Proximal-Like Minimization Algorithm Using {B}regman Functions},
  journal = {SIAM Journal on Optimization},
  volume = {3},
  number = {3},
  pages = {538--543},
  year = {1993},
  doi = {10.1137/0803026}
}

@article{chirinos2025,
  title={Optimization Landscape of {$\ell_0$}-{B}regman Relaxations},
  author={Chirinos-Rodríguez, J. and Févotte, C. and Soubies, E.},
  journal={ArXiv Preprint},
  year={2025},
}

@ARTICLE{FessHe1995,
  author={Fessler, J. A. and Hero, A. O.},
  journal={IEEE Transactions on Image Processing}, 
  title={Penalized maximum-likelihood image reconstruction using space-alternating generalized EM algorithms}, 
  year={1995},
  volume={4},
  number={10},
  pages={1417--1429},
  doi={10.1109/83.465106}
}

@article{Bolte2007,
  author = {Bolte, J. and Daniilidis, A. and Lewis, A.},
  year = {2007},
  pages = {1205--1223},
  title = {The {{\L}}ojasiewicz Inequality for Nonsmooth Subanalytic Functions with Applications to Subgradient Dynamical Systems},
  volume = {17},
  journal = {SIAM Journal on Optimization},
  doi = {10.1137/050644641}
}

@article{Kurdyka1998,
  author = {Kurdyka, K.},
  title = {On gradients of functions definable in o-minimal structures},
  journal = {Annales de l'Institut Fourier},
  pages = {769--783},
  year = {1998},
  publisher = {Association des Annales de l{\textquoteright}institut Fourier},
  volume = {48},
  number = {3},
  doi = {10.5802/aif.1638}
}

@article{Beck2009,
  author = {Beck, A. and Teboulle, M.},
  title = {A Fast Iterative Shrinkage-Thresholding Algorithm for Linear Inverse Problems},
  journal = {SIAM Journal on Imaging Sciences},
  volume = {2},
  number = {1},
  pages = {183--202},
  year = {2009},
  doi = {10.1137/080716542}
}

@article{CW2005,
  author = {Combettes, P. L. and Wajs, V. R.},
  title = {Signal Recovery by Proximal Forward-Backward Splitting},
  journal = {Multiscale Modeling \& Simulation},
  volume = {4},
  number = {4},
  pages = {1168--1200},
  year = {2005},
  doi = {10.1137/050626090}
}

@article{loj1963,
  title={Une propri{\'e}t{\'e} topologique des sous-ensembles analytiques r{\'e}els, in “Les {\'E}quations aux D{\'e}riv{\'e}es Partielles”}, 
  journal={{\'E}ditions du Centre National de la Recherche Scientifique},
  author={{\L}ojasiewicz, S.},
  year={1963},
  publisher={Paris}
}

@article{necoara2019linear,
  title={Linear convergence of first order methods for non-strongly convex optimization},
  author={Necoara, I. and Nesterov, Y. and Glineur, F.},
  journal={Mathematical programming},
  volume={175},
  number={1},
  pages={69--107},
  year={2019},
  publisher={Springer}
}

@article{Drus2018,
  author = {Drusvyatskiy, D. and Lewis, A. S.},
  title = {Error Bounds, Quadratic Growth, and Linear Convergence of Proximal Methods},
  journal = {Mathematics of Operations Research},
  volume = {43},
  number = {3},
  pages = {919--948},
  year = {2018},
  doi = {10.1287/moor.2017.0889}
}

@article{zhang2015,
  title={Restricted strong convexity and its applications to convergence analysis of gradient-type methods in convex optimization},
  author={Zhang, H. and Cheng, L.},
  journal={Optimization Letters},
  volume={9},
  number={5},
  pages={961--979},
  year={2015},
  publisher={Springer}
}

@article{beck2017,
  title={Linearly convergent away-step conditional gradient for non-strongly convex functions},
  author={Beck, A. and Shtern, S.},
  journal={Mathematical Programming},
  volume={164},
  number={1},
  pages={1--27},
  year={2017},
  publisher={Springer}
}

@article{Neco2016,
  title={Parallel random coordinate descent method for composite minimization: Convergence analysis and error bounds},
  author={Necoara, I. and Clipici, D.},
  journal={SIAM Journal on Optimization},
  volume={26},
  number={1},
  pages={197--226},
  year={2016},
  publisher={SIAM}
}

@article{Essafri2024,
  title={Exact continuous relaxations of $\ell_0$-regularized criteria with non-quadratic data terms},
  author={Essafri, M. and Calatroni, L. and Soubies, E.},
  journal={Journal of Global Optimization},
  volume={93},
  number={3},
  pages={651--699},
  year={2025},
  publisher={Springer}
}

@INPROCEEDINGS{EssafriKL,
  author={Essafri, M. and Calatroni, L. and Soubies, E.},
  booktitle={2024 IEEE 34th International Workshop on Machine Learning for Signal Processing (MLSP)}, 
  title={On $\ell_{0}$ {B}regman-Relaxations for {K}ullback--{L}eibler Sparse Regression}, 
  year={2024},
  volume={},
  number={},
  pages={1--6},
  doi={10.1109/MLSP58920.2024.10734806}
}

@article{Zhou2017,
  title={A unified approach to error bounds for structured convex optimization problems},
  author={Zhou, Z. and So, A. M.-C.},
  journal={Mathematical Programming},
  volume={165},
  number={2},
  pages={689--728},
  year={2017},
  publisher={Springer}
}

@inproceedings{Nesterov1983,
  title={A method for solving the convex programming problem with convergence rate O (1/k2)},
  author={Nesterov, Y.},
  booktitle={Dokl akad nauk Sssr},
  volume={269},
  pages={543},
  year={1983}
}

@book{BCombettes,
  author = {Bauschke, H. H. and Combettes, P. L.},
  title = {Convex analysis and monotone operator theory in {H}ilbert spaces},
  series = {CMS Books in Mathematics/Ouvrages de Math\'{e}matiques de la SMC},
  edition = {Second},
  publisher = {Springer, Cham},
  year = {2017},
  pages = {xix+619},
  isbn = {978-3-319-48310-8; 978-3-319-48311-5},
  mrclass = {49-02 (41A65 46B20 46C05 47H05 90C25)},
  mrnumber = {3616647},
  doi = {10.1007/978-3-319-48311-5},
  url_ = {https://doi.org/10.1007/978-3-319-48311-5},
}

@article{Donoho2005,
  author = {Donoho, D. L. and Tanner, J.},
  title = {Sparse nonnegative solution of underdetermined linear equations by linear programming},
  journal = {Proceedings of the National Academy of Sciences},
  volume = {102},
  number = {27},
  pages = {9446--9451},
  year = {2005},
  doi = {10.1073/pnas.0502269102}
}

@article{Boyer2019,
  author = {Boyer, C. and Chambolle, A. and Castro, Y. D. and Duval, V. and de Gournay, F. and Weiss, P.},
  title = {On Representer Theorems and Convex Regularization},
  journal = {SIAM Journal on Optimization},
  volume = {29},
  number = {2},
  pages = {1260--1281},
  year = {2019},
  doi = {10.1137/18M1200750}
}

@article{Lai2013,
  title={Augmented $\ell_1$ and Nuclear-Norm Models with a Globally Linearly Convergent Algorithm},
  volume={6},
  issn={1936-4954},
  doi={10.1137/120863290},
  number={2},
  journal={SIAM Journal on Imaging Sciences},
  publisher={Society for Industrial & Applied Mathematics (SIAM)},
  author={Lai, M.-J. and Yin, W.},
  year={2013},
  pages={1059--1091} 
}

@article{Peypo2017,
  title={From error bounds to the complexity of first-order descent methods for convex functions},
  author={Bolte, J. and Nguyen, T. P. and Peypouquet, J. and Suter, B. W.},
  journal={Mathematical Programming},
  volume={165},
  number={2},
  pages={471--507},
  year={2017},
  publisher={Springer}
}

@article{Li2017,
  title={Calculus of the Exponent of {K}urdyka--{{\L}}ojasiewicz Inequality and Its Applications to Linear Convergence of First-Order Methods},
  volume={18},
  issn={1615-3383},
  doi={10.1007/s10208-017-9366-8},
  number={5},
  journal={Foundations of Computational Mathematics},
  publisher={Springer Science and Business Media LLC},
  author={Li, G. and Pong, T. K.},
  year={2017},
  pages={1199--1232} 
}

@article{harmany2011spiral,
  title={This is SPIRAL-TAP: Sparse Poisson intensity reconstruction algorithms—theory and practice},
  author={Harmany, Z. T. and Marcia, R. F. and Willett, R. M.},
  journal={IEEE Transactions on Image Processing},
  volume={21},
  number={3},
  pages={1084--1096},
  year={2011},
  publisher={IEEE}
}

@article{Rock1970,
  title={Convex Analysis},
  author={Rockafellar, R. T.},
  journal={Princeton University Press},
  volume={28},
  year={1970}
}

@article{Attouch2013,
  title={Convergence of descent methods for semi-algebraic and tame problems: proximal algorithms, forward--backward splitting, and regularized Gauss--Seidel methods},
  author={Attouch, H. and Bolte, J. and Svaiter, B. F.},
  journal={Mathematical programming},
  volume={137},
  number={1},
  pages={91--129},
  year={2013},
  publisher={Springer}
}

@article{Teboulle2018,
  author = {Teboulle, M.},
  year = {2018},
  pages = {},
  title = {A simplified view of first order methods for optimization},
  volume = {170},
  journal = {Mathematical Programming. Ser. B},
  doi = {10.1007/s10107-018-1284-2}
}

@inproceedings{karimi2016,
  title={Linear convergence of gradient and proximal-gradient methods under the {P}olyak-{{\L}}ojasiewicz condition},
  author={Karimi, H. and Nutini, J. and Schmidt, M.},
  booktitle={Joint European conference on machine learning and knowledge discovery in databases},
  pages={795--811},
  year={2016},
  organization={Springer}
}

@book{Nesterov2013,
  title = {Introductory {Lectures} on {Convex} {Optimization}: {A} {Basic} {Course}},
  isbn = {978-1-4419-8853-9},
  shorttitle = {Introductory {Lectures} on {Convex} {Optimization}},
  publisher = {Kluwer Academic Publishers, Boston},
  author = {Nesterov, Y.},
  year = {2003}
}

@article{Bolte2018,
  author = {Bolte, J. and Sabach, S. and Teboulle, M. and Vaisbourd, Y.},
  title = {First Order Methods Beyond Convexity and {L}ipschitz Gradient Continuity with Applications to Quadratic Inverse Problems},
  journal = {SIAM Journal on Optimization},
  volume = {28},
  number = {3},
  pages = {2131--2151},
  year = {2018},
  doi = {10.1137/17M1138558}
}

@book{HUL2001,
  location = {Berlin, Heidelberg},
  title = {Fundamentals of Convex Analysis},
  rights = {http://www.springer.com/tdm},
  isbn = {978-3-540-42205-1 978-3-642-56468-0},
  url = {http://link.springer.com/10.1007/978-3-642-56468-0},
  publisher = {Springer},
  author = {Hiriart-Urruty, J.-B. and Lemaréchal, C.},
  year = {2001},
  doi = {10.1007/978-3-642-56468-0}
}

@inproceedings{LS2000,
  author = {Lee, D. and Seung, H. S.},
  booktitle = {Advances in Neural Information Processing Systems},
  editor = {T. Leen and T. Dietterich and V. Tresp},
  pages = {},
  publisher_ = {MIT Press},
  title = {Algorithms for Non-negative Matrix Factorization},
  url_ = {https://proceedings.neurips.cc/paper_files/paper/2000/file/f9d1152547c0bde01830b7e8bd60024c-Paper.pdf},
  volume = {13},
  year = {2000}
}

@ARTICLE{Fevotte2009,
  author={Févotte, C. and Bertin, N. and Durrieu, J.-L.},
  journal={Neural Computation}, 
  title={Nonnegative Matrix Factorization with the {I}takura-{S}aito Divergence: With Application to Music Analysis}, 
  year={2009},
  volume={21},
  number={3},
  pages={793--830},
  doi={10.1162/neco.2008.04-08-771}
}

@article{bauschke2017descent,
  title={A descent lemma beyond {L}ipschitz gradient continuity: first-order methods revisited and applications},
  author={Bauschke, H. H. and Bolte, J. and Teboulle, M.},
  journal={Mathematics of Operations Research},
  volume={42},
  number={2},
  pages={330--348},
  year={2017},
  publisher={Informs}
}

@article{Bauschke1997,
  author = {Bauschke, H. G. and Borwein, J. M.},
  journal = {Journal of Convex Analysis},
  language = {eng},
  number = {1},
  pages = {27--67},
  publisher = {Heldermann Verlag},
  title = {{L}egendre functions and the method of random {B}regman projections.},
  url = {http://eudml.org/doc/227096},
  volume = {4},
  year = {1997}
}

@article{Luo1993,
  title={Error bounds and convergence analysis of feasible descent methods: a general approach},
  author={Luo, Z.-Q. and Tseng, P.},
  journal={Annals of Operations Research},
  volume={46},
  number={1},
  pages={157--178},
  year={1993},
  publisher={Springer}
}

@article{adly2026,
  title={Variable {B}regman Majorization-Minimization algorithms for nonconvex nonsmooth optimization, with application to Poisson imaging}, 
  author={Adly, M. and Chazottes, A. and Chouzenoux, E. and Pesquet, J.-C. and Sureau, F.},
  journal={ArXiv preprint},
  year={2026}
}

@article{AusTeb2006,
  author = {Auslender, A. and Teboulle, M.},
  title = {Interior Gradient and Proximal Methods for Convex and Conic Optimization},
  journal = {SIAM Journal on Optimization},
  volume = {16},
  number = {3},
  pages = {697--725},
  year = {2006},
  doi = {10.1137/S1052623403427823}
}

@article{Tseng2010,
  title={Approximation accuracy, gradient methods, and error bound for structured convex optimization},
  author={Tseng, P.},
  journal={Mathematical Programming},
  volume={125},
  number={2},
  pages={263--295},
  year={2010},
  publisher={Springer}
}

@phdthesis{dey20043d,
  title={3D microscopy deconvolution using {R}ichardson--{L}ucy algorithm with total variation regularization},
  author={Dey, N. and Blanc-F{\'e}raud, L. and Zimmer, C. and Roux, P. and Kam, Z. and Olivo-Marin, J.-C. and Zerubia, J.},
  year={2004},
  school={INRIA}
}

@article{Zunino_2023,
  title={Reconstructing the Image Scanning Microscopy Dataset: An Inverse Problem},
  author={Zunino, A. and Castello, M. and Vicidomini, G.},
  journal={Inverse Problems},
  volume={39},
  number={6},
  pages={064004},
  year={2023},
  publisher={IOP Publishing}
}

@article{Hien2021,
   title={Algorithms for Nonnegative Matrix Factorization with the Kullback–Leibler Divergence},
   volume={87},
   ISSN={1573-7691},
   url_={http://dx.doi.org/10.1007/s10915-021-01504-0},
   DOI={10.1007/s10915-021-01504-0},
   number={3},
   journal={Journal of Scientific Computing},
   publisher={Springer Science and Business Media LLC},
   author={Hien, Le Thi Khanh and Gillis, Nicolas},
   year={2021}}

@article{daubechies2004iterative,
  title={An iterative thresholding algorithm for linear inverse problems with a sparsity constraint},
  author={Daubechies, I. and Defrise, M. and De Mol, C.},
  journal={Communications on Pure and Applied Mathematics: A Journal Issued by the Courant Institute of Mathematical Sciences},
  volume={57},
  number={11},
  pages={1413--1457},
  year={2004},
  publisher={Wiley Online Library}
}

@article{richardson1972bayesian,
  author  = {Richardson, W. H.},
  title   = {Bayesian-based iterative method of image restoration},
  journal = {Journal of the Optical Society of America},
  volume  = {62},
  number  = {1},
  pages   = {55--59},
  year    = {1972},
  doi     = {10.1364/JOSA.62.000055}
}

\appendix

\crefalias{section}{appendix}
\crefalias{subsection}{appendix} 
\crefalias{subsubsection}{appendix} 

\section{Proofs of~\Cref{sec:KL}}\label{appendix}

\subsection{Proof of~\Cref{prop:rrsc for KL}}\label{app:rrsc} 
To prove the result, we first need the following technical lemma.
\begin{lemma}\label{lem:anglecond}
    Let $\bA\in\R^{M\times N}$ and $\theta\in(0,\pi/2)$. Then, for any two points  $\bx, \bz\in\R^N$ such that $\angle (\bx-\bz,\ker(\bA))> \theta$ we have 
    \begin{equation}
        \|\bA(\bx-\bz)\|_2> \sigma_S\sin(\theta) \|\bx-\bz\|_2.
    \end{equation}
where $\sigma_S>0$ is the smallest non-zero singular value of $\bA$.
\end{lemma}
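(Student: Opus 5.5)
Set $\bd:=\bx-\bz$ and split it orthogonally along $\R^N=\ker(\bA)\oplus\ker(\bA)^\perp$, writing $\bd=\bd_0+\bd_1$ with $\bd_0\in\ker(\bA)$ and $\bd_1\in\ker(\bA)^\perp$. The angle hypothesis then becomes a lower bound on $\|\bd_1\|_2$, and the singular values of $\bA$ control $\bA$ on $\ker(\bA)^\perp$.

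First, relate the angle to the decomposition. By the convention in the notation paragraph, $\angle(\bd,\ker(\bA))$ is the angle between $\bd$ and its orthogonal projection $\bd_0$ onto $\ker(\bA)$. Elementary trigonometry in the right triangle with legs $\bd_0,\bd_1$ gives $\|\bd_1\|_2=\sin(\angle(\bd,\ker(\bA)))\,\|\bd\|_2$. The hypothesis $\angle(\bd,\ker(\bA))>\theta$ forces $\bd\neq\mathbf{0}$, so the angle is well defined. Two degenerate cases need checking:
\begin{itemize}
\item if $\bd_0=\mathbf{0}$, the angle is $\pi/2$ and $\|\bd_1\|_2=\|\bd\|_2$, so the identity still holds;
\item if $\ker(\bA)=\{\mathbf{0}\}$, the same reading (angle $\pi/2$) applies.
\end{itemize}
Since $\sin$ is strictly increasing on $[0,\pi/2]$ and $\theta\in(0,\pi/2)$, we get $\|\bd_1\|_2>\sin(\theta)\|\bd\|_2$.

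Second, bound the action of $\bA$. We have $\bA\bd=\bA\bd_1$. By the SVD, $\ker(\bA)^\perp=\mathrm{Im}(\bA^T)$ is spanned by the right singular vectors with nonzero singular values. Hence for $\bv\in\ker(\bA)^\perp$,
\[
\|\bA\bv\|_2^2=\sum_i\sigma_i^2\langle \bv,\bv_i\rangle^2\geq\sigma_S^2\|\bv\|_2^2 .
\]
This requires $\bA\neq\mathbf{0}$ so that $\sigma_S$ exists, which holds in the paper's setting since $\bA$ has no zero column.

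Combining the two steps gives $\|\bA\bd\|_2=\|\bA\bd_1\|_2\geq\sigma_S\|\bd_1\|_2>\sigma_S\sin(\theta)\|\bd\|_2$. The strict inequality survives because $\sigma_S>0$ and $\|\bd_1\|_2>\sin(\theta)\|\bd\|_2$ strictly.

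The only delicate step is the first one: making the angle convention rigorous, including the degenerate cases, to justify $\|\bd_1\|_2=\sin(\angle)\|\bd\|_2$. The rest is standard linear algebra.
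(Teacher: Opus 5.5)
Your proof is correct and takes essentially the same route as the paper. The paper likewise replaces $\bx-\bz$ by its component $\bw\in\ker(\bA)^\perp$ (so that $\bA(\bx-\bz)=\bA\bw$), uses the SVD to get $\|\bA\bw\|_2\geq\sigma_S\|\bw\|_2$, and reads the angle hypothesis as $\|\bw\|_2>\sin(\theta)\|\bx-\bz\|_2$; your handling of the degenerate cases and your justification of $\|\bw\|_2=\sin(\angle(\bx-\bz,\ker(\bA)))\,\|\bx-\bz\|_2$ just make rigorous a step the paper asserts without proof.
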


\begin{proof} Let $\bx, \bz\in\R^N$ such that $\angle (\bx-\bz,\ker(\bA))> \theta$. Then, there exists $\mathbf{w}\in\ker(\bA)^\perp$ such that $\bA(\bx-\bz)=\bA\mathbf{w}$. Additionally, let $\mathrm{rank}(\bA)=S\leq\min\{M,N\}$ and consider the singular value decomposition of $\bA$, $\bA=\mathbf{U}\boldsymbol{\Sigma}\mathbf{V}^T$, where $\mathbf{U}\in\R^{M\times M}$ and $\mathbf{V}\in\R^{N\times N}$ are orthogonal matrices and $\boldsymbol{\Sigma}=\mathrm{diag}(\sigma_1,\ldots,\sigma_S,0,\ldots,0)\in\R^{M\times N}$, being $\sigma_s>0$, $s=1,\ldots, S$, the singular values of $\bA$. With this, there exist coefficients $c_s>0$, $s=1,\ldots, S$ such that $\bw=\sum_{s=1}^S c_s\mathbf{v}_s$(the first $S$ columns of $\mathbf{V}$ form a basis of $\ker(\bA)^\perp$), which implies that $\|\bw\|_2^2=\sum_{s=1}^s c_s^2$. We have
$$
\bA\bw=\bA\left(\sum_{s=1}^S c_s\mathbf{v}_s\right)=\sum_{s=1}^S c_s\bA\mathbf{v}_s=\sum_{s=1}^S c_s\sigma_s\mathbf{u}_s,
$$
and so
$$
\|\bA\bw\|_2^2=\|\sum_{s=1}^S c_s\sigma_s\mathbf{u}_s\|_2^2=\sum_{s=1}^Sc_s^2\sigma_s^2\langle\mathbf{u}_s,\mathbf{u}_s\rangle=\sum_{s=1}^Sc_s^2\sigma_s^2.
$$
The above leads to
$$
\|\bA\bw\|_2^2\geq\sigma_S^2\sum_{s=1}^Sc_s^2=\sigma_{S}^2\|\bw\|_2^2.
$$
Now, since $\angle (\bx-\bz,\ker(\bA))> \theta$ by assumption, we have that $\|\bw\|_2>\sin(\theta)\|\bx-\bz\|_2$. Combining everything, we have that
$$
\|\bA(\bx-\bz)\|_2=\|\bA\bw\|_2\geq\sigma_S\|\bw\|_2>\sigma_S\sin(\theta)\|\bx-\bz\|_2.
$$
This concludes the proof.
\end{proof}
We are now ready to prove the desired result.

\subsubsection{\texorpdfstring{Proof for $\varphi_{\mathrm{sB}}$}{Smoothed Burgs entropy varphisb}}\label{proof: brsc}

Let $\theta\in(0,\pi/2)$. We aim at showing that there exists $\mu>0$ such that
\begin{equation}\label{eq:RRSC_for_KL_1}
\sum_{m =1}^{M} \frac{y_{m}[\bA(\bx-\bar\bx_{\ell_2})]_m^2}{([\bA\bx]_m+b_m)([\bA\bar\bx_{\ell_2}]_m+b_m)}
\geq 
\mu \sum_{n=1}^N \frac{(x_n-\bar x_{\ell_2,n})^2}{(x_n + \xi)(\bar x_{\ell_2,n} + \xi )}
\end{equation}
for all $\bx \in \cC\setminus\cS_{\ker}^\theta$.
We now aim at simplifying the above estimate by finding an appropriate lower bound for the left hand side. To do so, let us define the following constant
$$
\delta_1:= \frac{\min_{m}y_m}{\max_{\bz\in \cS} \|\bA\mathbf{z}+\mathbf{b}\|_2}>0.
$$
Observe that $\delta_1$ is strictly positive since $\by>0$ by assumption and the maximum in the denominator is attained because $\mathcal{S}$ is compact (\Cref{lem:sol set KL}). We then have, using  $0 \leq [\bA\bx]_m \leq \|\bA\bx\|_2 \leq \|\bA\|_2 \|\bx\|_2$, that
\begin{equation*}\label{eq:rrsc before lemma}
\begin{aligned}
\sum_{m =1}^{M} \frac{y_{m}[\bA(\bx-\bar\bx_{\ell_2})]_m^2}{([\bA\bx]_m+b_m)([ \bA\bar\bx_{\ell_2}]_m+b_m)}\geq
\delta_1\sum_{m =1}^{M} \frac{[\bA(\bx-\bar\bx_{\ell_2})]_{m}^2}{\|\bA\|_2\|\bx\|_2+b_m}\geq\delta_1\frac{\|\bA(\bx-\bar\bx_{\ell_2})\|_2^2}{\|\bA\|_2\|\bx\|_2+\max_mb_m}
\end{aligned}
\end{equation*}
for all $\bx\in\cC\setminus\cS_{\ker}^\theta$.  Now, since $\bx\in\cC\setminus\cS_{\ker}^\theta$, we have that $\angle(\bx-\bar\bx_{\ell_2},\ker(\bA))> \theta$ and hence, by~\Cref{lem:anglecond} we get that $\|\bA(\bx-\bar\bx_{\ell_2})\|_2> \sigma_S\sin(\theta)\|\bx-\bar\bx_{\ell_2}\|_2$, which leads to
\begin{equation}\label{eq:rrsc_proof_final_estimate}
\begin{aligned}
\sum_{m =1}^{M} \frac{y_{m}[\bA(\bx-\bar\bx_{\ell_2})]_{m}^2}{([\bA\bx]_{m}+ b_m)([\bA\bar\bx_{\ell_2}]_{m}+b_m)}
&\geq
\delta_1\frac{\|\bA(\bx-\bar\bx_{\ell_2})\|_2^2}{\|\bA\|_2\|\bx\|_2+\max_m b_m}\\
&> 
\delta_1\sigma_S\sin(\theta)\frac{\|\bx-\bar\bx_{\ell_2}\|_2^2}{\|\bA\|_2\|\bx\|_2+\max_m b_m}.
\end{aligned}
\end{equation}
The rest of the proof follows exactly the same as the one of~\cite[Theorem 2.7]{chirinos2025}. For this reason, we omit it and only recall here the behaviour of $\mu$ as a function of $\xi$: we have that
\begin{equation}\label{eq:mu_estimate}
    \mu \geq \min \left\lbrace c_1 \xi^2, \frac{c_2\xi}{c_3 + \xi} \right\rbrace,
\end{equation}
for some $c_1,c_2,c_3>0$. 


\subsubsection{\texorpdfstring{Proof for $\varphi_{\mathrm{B}}$}{Burg's entropy varphiB}}
Let $\theta\in(0,\pi/2)$ and $\varepsilon>0$. In order to prove the result, we will show that, within $\cK_\varepsilon$, the left hand side of~\cref{eq:rrsc} for $\varphi_{\mathrm{B}}$ can be written as that of $\varphi_{\mathrm{sB}}$. Let $\bx\in\cK_\varepsilon$. Then, we have that $\bx, \bar \bx_{\ell_2} \in \R_{\geq \varepsilon}^N$ and, thus, there exist, $\mathbf{v}_1,\mathbf{v_2}\in\cC$ such that $\bx=\mathbf{v}_1+\mathbf{1}\eps$,  $\bar \bx_{\ell_2} = \mathbf{v}_2 + \mathbf{1}\varepsilon$. We then have that
$$
\begin{aligned}
\langle \nabla \varphi_\mathrm{B}(\bx) -\nabla \varphi_\mathrm{B}(\bar\bx_{\ell_2}),\bx-\bar\bx_{\ell_2} \rangle=
\sum_{n=1}^N\frac{(x_{n}-\bar x_{\ell_2,n})^2}{x_n \bar x_{\ell_2,n}}=
\sum_{n=1}^N\frac{(v_{1,n}-\bar v_{2,n})^2}{(v_{1,n}+\varepsilon)(\bar v_{2,n}+\varepsilon)}
\end{aligned}
$$
where the last term equals the left hand side of~\cref{eq:rrsc} when $\varphi=\mathrm{sBurg}$ for $\mathbf{v}\in\cC$, as desired.

\subsubsection{Proof for $\varphi_{\ell_2}$}
When $\bA$ has full column rank, $F$ is locally strongly convex since one can locally lower bound the smallest eigenvalue of its Hessian. When $\bA$ does not have full column rank, let $\mathcal{B} \subset \cC$ be any bounded set containing $\cS$. Then we get from this boundedness that
$$
\delta_2 := \min_{\bx \in \mathcal{B}} \frac{1}{\|\bA\|_2 \|\bx\|_2 + \max_m b_m} >0.
$$
Combining this with~\eqref{eq:rrsc_proof_final_estimate}, we get that, for all $\bx \in \mathcal{C} \setminus \cS^{\theta}_{\ker}$
$$
\sum_{m =1}^{M} \frac{y_{m}[\bA(\bx-\bar\bx_{\ell_2})]_m^2}{([\bA\bx]_m+b_m)([ \bA\bar\bx_{\ell_2}]_m+b_m)}\geq
\delta_1c\frac{\|\bx-\bar\bx_{\ell_2}\|_2^2}{\|\bA\|_2\|\bx\|_2+\max_m b_m} \geq \delta_1 \delta_2 c \|\bx-\bar\bx_{\ell_2}\|_2^2
$$
for some constants $\delta_1, \ c>0$ defined previously. This completes the proof.

\subsection{Proof of~\Cref{prop:restr_alpha_KL}}\label{sec:alphaKL_is_positive}

The proof of the result will be based on the following Lemma, which enlightens the fact that if $\varphi$ is separable, then it suffices to prove the result in one dimension.

\begin{lemma}\label{lem:from_n_to_one}
Let $\varphi\in\Gamma_0(\R^N)$ be a separable Legendre function defined by $\varphi(\bx):=\sum_{n=1}^N\phi_n(x_n)$ for all $\bx$, where each $\phi_n\in\Gamma_0(\R)$ is a Legendre function for $n=1,\ldots, N$. Consider sets $\cD_n\subseteq \mathrm{int}(\dom \phi_n)$ and their associated restricted symmetry coefficients $\alpha_{\cD_n}(\phi_n)\in[0,1]$. For any $\cD\subseteq \cD_1\times\cdots\times\cD_N$, the (restricted) symmetry coefficient of $\varphi$, $\alpha_\cD(\varphi)$, satisfies $\alpha_{\cD}(\varphi) \geq \min_{n=1,...,N} \alpha_{\cD_n}(\phi_n)$.
\end{lemma}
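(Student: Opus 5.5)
The plan is to use separability to split the Bregman divergence coordinatewise, and then compare the two sums term by term.

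Since $\varphi(\bx)=\sum_n\phi_n(x_n)$ and each $\phi_n$ is differentiable on $\mathrm{int}(\dom\phi_n)$, we have $\mathrm{int}(\dom\varphi)=\prod_n \mathrm{int}(\dom\phi_n)$ and $\nabla\varphi(\bx)=(\phi_1'(x_1),\ldots,\phi_N'(x_N))$. It follows that
\[
D_\varphi(\bx,\by)=\sum_{n=1}^N D_{\phi_n}(x_n,y_n)
\]
for all $\bx,\by\in\mathrm{int}(\dom\varphi)$.

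Now fix $\bx\in\mathrm{int}(\dom\varphi)$ and $\by\in\cD$ with $\bx\neq\by$, and set $\alpha:=\min_n\alpha_{\cD_n}(\phi_n)$. Since $\cD\subseteq\cD_1\times\cdots\times\cD_N$, each $y_n\in\cD_n$, while $x_n\in\mathrm{int}(\dom\phi_n)$. Consider each coordinate separately.
\begin{itemize}
\item If $x_n\neq y_n$, the definition of $\alpha_{\cD_n}(\phi_n)$ as an infimum gives $D_{\phi_n}(x_n,y_n)\geq \alpha_{\cD_n}(\phi_n)D_{\phi_n}(y_n,x_n)\geq \alpha D_{\phi_n}(y_n,x_n)$.
\item If $x_n=y_n$, both divergences vanish, so the same inequality holds trivially.
\end{itemize}
Summing over $n$ gives $D_\varphi(\bx,\by)\geq\alpha D_\varphi(\by,\bx)$.

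Because $\bx\neq\by$ and $\varphi$ is strictly convex, $D_\varphi(\by,\bx)>0$, so we may divide. This gives $D_\varphi(\bx,\by)/D_\varphi(\by,\bx)\geq\alpha$, and taking the infimum over admissible pairs yields $\alpha_\cD(\varphi)\geq\alpha$.

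The only point needing care is the bookkeeping for coordinates where $x_n=y_n$, where the one-dimensional ratio is undefined. Handling these as a separate case, as above, settles it. Strict convexity of each $\phi_n$ (Legendre) is used only to make sure the overall denominator is nonzero.
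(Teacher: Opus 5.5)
Your proposal is correct and follows the same route as the paper: split $D_\varphi$ coordinatewise using separability, bound each term by the common constant $\min_n \alpha_{\cD_n}(\phi_n)$, sum, divide, and take the infimum. Your separate treatment of coordinates with $x_n=y_n$ and your check that $D_\varphi(\by,\bx)>0$ spell out two small points the paper leaves implicit.
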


\begin{proof}
First, we notice that, since $\varphi$ is separable, then so is its associated Bregman divergence: for all $\bx, \by\in\mathrm{int}(\dom \varphi)$ we have that $D_{\varphi}(\bx,\by)=\sum_{n=1}^N d_{\phi_n}(x_n, y_n)$, where $d_{\phi_n}(\cdot,\cdot)$ denotes the one-dimensional Bregman divergence associated to $\phi_n$. Let $\cD\subseteq\cD_1\times\cdots\times\cD_N$. Then, by the definition of $\alpha_{\cD_n}(\phi_n)$, $n=1,\ldots, N$, it holds that
$$
D_{\varphi}(\bx,\by)=\sum_{n=1}^N d_{\phi_n}(x_n,y_n) \geq \min_{n=1,...,N} \alpha_{\cD_n}(\phi_n) \sum_{i=1}^N d_{\phi_n}(y_n,x_n)= \min_{n=1,...,N} \alpha_{\cD_n}(\phi_n) D_{\varphi}(\by,\bx),
$$
for all $\bx\in\mathrm{int}(\dom \varphi), \ \by\in\cD$ with $\bx\neq \by$. Notice that, as $\cD\subseteq \cD_1\times\cdots\times \cD_N$, we have that $y_m\in\cD_m$ for all $m=1,\ldots, M$. The above expression directly implies that  
$$
\frac{D_{\varphi}(\bx,\by)}{D_{\varphi}(\by,\bx)} \geq \min_{n=1,...,N} \alpha_{\cD_n}(\phi_n)
$$
for all $\bx\in\mathrm{int}(\dom \varphi), \ \by\in\cD$ with $\bx\neq\by$. Taking the infimum over $\bx$ and $\by$, we obtain the definition of $\alpha_\cD(\varphi)$, which yields
$$
\alpha_{\mathcal{D}}(\varphi) \geq \min_{n=1,...,N} \alpha_{\cD_n}(\phi_n).
$$
This completes the proof.
\end{proof}

\subsubsection{\texorpdfstring{Proof for $\varphi_\mathrm{sB}$}{varphi=sBurg}}

As already indicated, the above result allows us to prove the claim only for the one-dimensional setting $\phi_\xi(x):= - \log (x+\xi)$.

\begin{lemma} \label{thm: alpha(h)>0}
Let $\phi_\xi:=- \log(x+\xi)$, $\xi>0$, be the one-dimensional smoothed Burg's entropy. Then, for any compact set $\mathcal{S}\subset \mathbb{R}_{\geq 0}$, it holds that
\begin{equation}
\alpha_{\mathcal{S}}(\phi_\xi):=\inf \left\{\frac{d_{\phi_\xi} (x,y)}{d_{\phi_\xi}(y,x)} \mid x\in \mathbb{R}_{\geq 0}, \ y \in \mathcal{S}, x\neq y\right\} >0.
\end{equation}
\end{lemma}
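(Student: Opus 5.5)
We need to show that $\inf d_{\phi_\xi}(x,y)/d_{\phi_\xi}(y,x) > 0$ over $x\ge 0$ and $y$ in a compact $\cS\subset\R_{\geq 0}$ with $x\neq y$. First we write the divergence explicitly. Set $u:=x+\xi>0$ and $v:=y+\xi$. Since $\cS$ is compact, $v\in[\xi, \xi+Y]$ with $Y:=\max\cS$. Then
$$
d_{\phi_\xi}(x,y) = \frac{u}{v}-\log\frac{u}{v}-1 = h(t),\qquad d_{\phi_\xi}(y,x)=\frac{v}{u}-\log\frac{v}{u}-1=h(1/t),
$$
with $t:=u/v$ and $h(t):=t-\log t-1$. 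The ratio is therefore a function of $t$ alone, $r(t):=h(t)/h(1/t)$, defined for $t\neq 1$. The constraints $x\ge0$ and $y\in\cS$ confine $t$ to $[t_{\min},\infty)$ with $t_{\min}:=\xi/(\xi+Y)>0$. This is exactly where the smoothing and the compactness of $\cS$ enter: for Burg's entropy ($\xi=0$) the value $t$ could approach $0$.

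Next we study $r$ on $[t_{\min},\infty)\setminus\{1\}$ and show it is bounded below by a positive constant. There are three regimes.

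\emph{Near $t=1$:} Taylor expansion gives $h(t)=\tfrac12(t-1)^2-\tfrac13(t-1)^3+O((t-1)^4)$ and $h(1/t)=\tfrac12(t-1)^2-\tfrac23(t-1)^3+\dots$. Hence $r(t)\to1$, and $r$ extends continuously with $r(1)=1$.

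\emph{As $t\to\infty$:} here $h(t)\sim t$ while $h(1/t)=1/t+\log t-1\sim\log t$. So $r(t)\to+\infty$, and $r$ stays above $1$ on some $[T,\infty)$.

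\emph{On the compact interval $[t_{\min},T]$:} $r$ is continuous, and it is strictly positive because $h>0$ away from $1$ and $h(1/t)<\infty$ for $t\ge t_{\min}>0$. Its minimum is therefore positive.

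Combining the three regimes, $\alpha_\cS(\phi_\xi)\ge \min\{1,\min_{[t_{\min},T]} r\}>0$.

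The main obstacle is only the careful handling of the removable singularity at $t=1$ and the tail $t\to\infty$; the compact middle part is immediate. For the singularity at $t=1$, one could alternatively avoid Taylor expansions by using $h(s)=\int_1^s (1-1/\sigma)\,d\sigma$, which bounds both $h(t)$ and $h(1/t)$ between constant multiples of $(t-1)^2$ on a neighborhood of $1$.

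We also expect $r$ to be increasing in $t$ (since $r(t)\to0$ only as $t\to0^+$), in which case the explicit constant $\alpha_\cS(\phi_\xi)=r(t_{\min})$ would follow. This is optional and not needed for positivity.
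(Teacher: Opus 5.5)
Your proof is correct, and it follows a different route from the paper's.

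\textbf{How the two arguments differ.} The paper works directly in the variables $(x,y)$. It shows that $\Delta_x(y)$ extends continuously across the diagonal via L'H\^opital, sets $m(x)=\min_{y\in\mathcal{S}}\Delta_x(y)>0$ by the extreme value theorem, and argues that $m$ is continuous. It then proves $m(x)\to+\infty$ by bounding numerator and denominator uniformly in $y$. You instead use the scale invariance of the log-barrier divergence, $d_{\phi_\xi}(x,y)=h\bigl((x+\xi)/(y+\xi)\bigr)$. This collapses the problem to the single function $r(t)=h(t)/h(1/t)$ on $[t_{\min},\infty)$, with $t_{\min}=\xi/(\xi+\max\mathcal{S})$. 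The same three ingredients (removable singularity, growth at infinity, compactness in between) then become elementary one-variable facts.

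\textbf{What your route buys.} First, it avoids the joint-continuity argument needed for $m$ and the uniform-in-$y$ estimates. Only $\max\mathcal{S}$ matters, whereas the paper's coercivity bound as written presumes $\min\mathcal{S}>0$. That slip is harmless, but your argument does not need it. Second, it shows exactly why $\xi>0$ and boundedness of $\mathcal{S}$ are needed: both keep $t_{\min}$ away from $0$, where $r(t)\sim -t\log t\to 0$. This also explains the paper's remark on unbounded $\mathcal{S}$. Third, it handles the Burg case over $\mathbb{R}_{\geq\varepsilon}$ directly, since there $t\geq\varepsilon/\max\mathcal{S}$, without the shift argument.

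\textbf{Your optional monotonicity guess is true.} One computes
\[
(\log r)'(t)=\frac{t-1}{t}\Bigl[\frac{1}{h(t)}-\frac{1}{t\,h(1/t)}\Bigr],
\qquad
t\,h(1/t)-h(t)=(t+1)\log t-2(t-1).
\]
The classical inequality $\log t>2(t-1)/(t+1)$ for $t>1$, with the reverse inequality for $0<t<1$, gives $(\log r)'>0$ for $t\neq 1$. Hence $r$ is strictly increasing on $(0,\infty)$, and in fact $\alpha_{\mathcal{S}}(\phi_\xi)=r(t_{\min})$ exactly, with $r(1):=1$. When $\max\mathcal{S}>0$ this value is attained at $x=0$, $y=\max\mathcal{S}$.

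\textbf{What the paper's route buys.} It is a generic continuity-plus-coercivity scheme that does not rely on the divergence depending only on a ratio. It is therefore the template to adapt for Legendre functions lacking this homogeneity.
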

\begin{proof}
Note that the definition of $\alpha_{\mathcal{S}}(\varphi)$ can be rewritten as
$$
\alpha_{\mathcal{S}}(\varphi)=\inf_{x\in\R_{\geq 0}} m(x), \quad \text{where } m(x)=\inf_{y\in \cS} \Delta_x(y),
$$
where, for all $x\in\R_{\geq 0}$,
$$
\Delta_x\colon \R\to \R_{\geq 0}; \quad \Delta_x(y):=
\begin{cases}
\frac{d_{\phi_\xi}(x, y)}{d_{\phi_\xi}(y, x)} & \text{ if } x\neq y\\
1, & \text{ if } x=y.
\end{cases}
$$
To prove the result, we start by showing that $m$ is well defined and continuous. To do so, we first claim that, for all $x\in \R_{\geq 0}$, $\Delta_x$ is continuous. First, if $x\neq y$, then it is continuous as the division of two Bregman divergences, the denominator being nonzero everywhere. It is therefore sufficient to prove that 
$$
\lim_{x\to y}\frac{d_{\phi_\xi}(x, y)}{d_{\phi_\xi}(y, x)}=1.
$$
We have that
$$
\lim_{x\to y}\frac{d_{\phi_\xi}(x, y)}{d_{\phi_\xi}(y, x)}=\lim_{x\to y}\frac{\log(\frac{y+\xi}{x+\xi})+\frac{x-y}{y+\xi}}{ \log(\frac{x+\xi}{y+\xi})+\frac{y-x}{x+\xi}}=\frac{\log(t)+1/t-1}{-\log(t)+t-1},
$$
where simply $t:=(y+\xi)/(x+\xi)$. In this case, as $x\to y$ is equivalent to $t\to 1$, we need to prove that
$$
\lim_{t\to 1}\frac{\log(t)+1/t-1}{-\log(t)+t-1}=1.
$$
By applying L'H\^opital's rule twice, we derive that
$$
\begin{aligned}
\lim_{t\to 1} \frac{\log(t)+1/t-1}{-\log(t)+t-1}=\lim_{t\to 1}\frac{t\log(t)+1-t}{-t\log(t)+t^2-t}= \lim_{t\to 1} \frac{\log(t)}{-\log(t)+2t-2}=\lim_{t\to 1} \frac{1/t}{-1/t+2}=1,
\end{aligned}
$$
concluding the proof of the claim. Now, as $\Delta_x(y)>0$ for all $y\in\cS$, $\cS$ is compact, and $\Delta_x$ is continuous, we get by the extreme value theorem that the minimum over $\mathcal{S}$ is attained, and so $m(x)=\min_{y\in \cS} \Delta_x(y)>0$ for all $x\in \R$. Additionally, as $m$ is defined as the minimum of continuous functions over a compact set, it is continuous too. Thus to conclude it suffices to show that $m$ is coercive. We want to show that 
$$
    \lim_{x\to\infty} \ \underset{y \in \cS}{\inf}\, \frac{d_{\phi_\xi}(x,y)}{d_{\phi_\xi}(y,x)} = \underset{x \to + \infty}{\lim} \ \underset{y \in \cS}{\inf} \, \frac{\log(\frac{y+\xi}{x+\xi})+\frac{x-y}{y+\xi}}{ \log(\frac{x+\xi}{y+\xi})+\frac{y-x}{x+\xi}}=+\infty.
$$
To do so, we will use compactness of $\mathcal{S}$ to lower bound (resp. upper bound) the numerator (resp. denominator) of the ratio above in such a way that it does not depend on $y$ anymore. We start observing that, since $\mathcal{S}$ is compact, there exist $0<\delta_1\leq\delta_2<+\infty$ such that $y\in[\delta_1,\delta_2]$. It follows that $\log(y+\xi)\geq \log(\delta_1)$ and
$$
\frac{x-y}{y+\xi}\geq \frac{x-y}{\delta_2+\xi}\geq\frac{x-\delta_1}{\delta_2+\xi}.
$$
Then, the numerator satisfies,
$$
\begin{aligned}
d_{\phi_\xi}(x,y)\geq  -\log(x+\xi)+\log(\delta_1)+\frac{x-\delta_1}{\delta_2+\xi}.
\end{aligned}
$$
On the other hand, we have that $\frac{y-x}{x+\xi}\leq \frac{y}{x+\xi}\leq \frac{\delta_2}{\xi}$, and so, the denominator satisfies
$$
 d_{\phi_\xi}(y,x)\leq -\log(\xi)+\log(x+\xi)+\frac{\delta_2}{\xi}.
$$
Combining both estimates, we have that
\begin{equation}\label{eq:lower_bound_m}
m(x)=\underset{y \in \cS}{\inf} \, \frac{d_{\phi_\xi}(x,y)}{d_{\phi_\xi}(y,x)}\geq\frac{-\log(x+\xi)+\log(\delta_1)+\frac{x-\delta_1}{\delta_2+\xi}}{-\log(\xi)+\log(x+\xi)+\frac{\delta_2}{\xi}},
\end{equation}
where the right hand side above clearly satisfies that
$$
\underset{x \to + \infty}{\operatorname{lim}} \frac{-\log(x+\xi)+\log(\delta_1)+\frac{x-\delta_1}{\delta_2+\xi}}{-\log(\xi)+\log(x+\xi)+\frac{\delta_2}{\xi}}= + \infty.
$$
Taking limits on both sides of \eqref{eq:lower_bound_m} leads to the desired result. 
\end{proof}
The desired result is a direct consequence of combining both ~\Cref{lem:from_n_to_one} and~\Cref{thm: alpha(h)>0}. Specifically, in~\Cref{thm: alpha(h)>0} we consider, for all $n=1,\ldots, N$,
$$
\cD_n:=\pi_n(\cS)=\{u \in\R_{\geq 0} \ \mid \ \text{there exists } \bar \bx \in \cS \text{ s.t. } \bar x_n = u\};
$$
i.e., for all $n$ we define $\cD_n$ as the Euclidean projection of $\cS$ onto the $n^{\mathrm{th}}$ component. Then,~\Cref{lem:from_n_to_one} holds for $\cS$ since by definition $\cS\subseteq\cD_1\times\ldots\times\cD_N$.

We add a final remark emphasizing the importance on the compactness of $\mathcal{S}$.

\begin{remark}
    In the proof of~\Cref{thm: alpha(h)>0}, the compactness of $\mathcal{S}$, in particular its boundedness was crucial. It is possible to show that if $\mathcal{S}$ is not bounded then $\alpha_{\mathcal{S}}(\phi_\xi)=0$. It suffices to notice that, for all $x \in \mathbb{R}_{\geq 0}$:
    $$
\underset{y \to + \infty}{\operatorname{lim}} \frac{d_{\phi_\xi}(x,y)}{d_{\phi_\xi}(y,x)}=\underset{y \to + \infty}{\operatorname{lim}} \frac{\log (y+\xi)+\log(x+\xi)+\frac{x-y}{y+\xi}}{\log(x+\xi)-\log(y+ \xi)+ \frac{y-x}{x+\xi}}= 0.
    $$
In particular we get that 
$$
0 \leq \alpha_{\mathcal{S}}(\phi_\xi) = \underset{x \in \mathbb{R}_{\geq 0}}{\inf} m(x) \leq  m(1) = \underset{y \in S}{\inf} \Delta_1 (y) \leq \underset{y \to + \infty}{\operatorname{lim}} \frac{d_{\phi_\xi}(1,y)}{d_{\phi_\xi}(y,1)}=0.
$$
\end{remark}

\subsubsection{Proof for $\varphi_\mathrm{B}$}
As done for $\varphi_\mathrm{sB}$, thanks to \Cref{lem:from_n_to_one} it is sufficient to prove the claim  for the one-dimensional setting. As we only want to prove the result over $\R_{\geq \varepsilon}$ we get
\begin{align*}
    \alpha_{\mathcal{S} \cap \R_{\geq \varepsilon}}(\varphi_\mathrm{B})&:=  \inf \left\{\frac{d_{\varphi_\mathrm{B}} (x,y)}{d_{\varphi_\mathrm{B}}(y,x)} \mid x\in \mathbb{R}_{\geq \varepsilon}, \ y \in \mathcal{S} \cap \R_{\geq \varepsilon}, x\neq y\right\} \\
    & =  \inf \left\{\frac{d_{\varphi_\mathrm{B} }(x+\varepsilon,y+\varepsilon)}{d_{\varphi_\mathrm{B}}(y + \varepsilon,x+\varepsilon)} \mid x\in \mathbb{R}_{\geq 0}, \ y \in (\mathcal{S} - \varepsilon) \cap \R_{\geq 0}, x\neq y\right\} \\
    & =  \inf \left\{\frac{d_{\varphi_\mathrm{B}(\cdot + \varepsilon)} (x,y)}{d_{\varphi_\mathrm{B}(\cdot+\varepsilon)}(y ,x)} \mid x\in \mathbb{R}_{\geq 0}, \ y \in (\mathcal{S} - \varepsilon) \cap \R_{\geq 0}, x\neq y\right\} >0
\end{align*}
where the $>0$ is a direct application of the case $\varphi_{\mathrm{sB}}$ proved in the previous section, using the fact that $(\mathcal{S} - \varepsilon) \cap \R_{\geq 0}$ is bounded as $\mathcal{S}$ is.

\subsection{Proof of~\Cref{prop:rel_smooth_KL}}\label{proof:rel_smooth_KL}

\subsubsection{\texorpdfstring{Proof for $\varphi_\mathrm{sB}$}{varphi=sBurg}}
As they will be used in the following, we provide here the expressions of the gradient and Hessian of $\varphi_\mathrm{sB}$,
\begin{equation}
    \nabla \varphi_\mathrm{sB}(\bx) = - \frac{1}{\bx + \bm{\xi} }, \; \text{ and, } \; \nabla^2\varphi_\mathrm{sB}(\bx) = \frac{1}{(\bx + \bm{\xi})^2},
\end{equation}
where simply $\boldsymbol{\xi}=(\xi,\ldots, \xi)$ and division and square operations have to be understood component-wise. Given that both $F$ and $\varphi_\mathrm{sB}$ are twice differentiable, the relative smoothness property described in~\cref{eq:rel_Lip} is equivalent to 
\begin{equation}
L \left<\nabla^2 \varphi_\mathrm{sB}(\bx)\mathbf{d},\mathbf{d}  \right>  \geq \left<\nabla^2 \mathrm{KL}(\by,\bA\bx+\mathbf{b})\mathbf{d},\mathbf{d} \right>, \quad \text{for all } \bx \in \R_{\geq 0}^N, \, \mathbf{d} \in \R^N.
\end{equation}
First, observe that
\begin{align}
& \left<\nabla^2 \varphi_\mathrm{sB}(\bx)\mathbf{d},\mathbf{d}  \right> = \sum_{n=1}^N \frac{d_n^2}{(x_n + \xi)^2}, \label{eq:hess_cond_h} \\ 
    & \left<\nabla^2 \mathrm{KL}(\by,\bA\bx+\mathbf{b})\mathbf{d},\mathbf{d} \right> = \sum_{m=1}^M \frac{y_m}{([\bA \bx]_m +b_m)^2} [\bA \mathbf{d}]_m^2. \label{eq:hess_cond_F}
\end{align}
Now, notice that, by the Cauchy-Schwartz inequality,
$$
\begin{aligned}
[\bA \mathbf{d}]_m^2&=\left(\sum_{n=1}^Na_{m,n}d_n\right)^2=\left(\sum_{n=1}^N\sqrt{a_{m,n}(x_n+\xi)}\frac{\sqrt{a_{m,n}}d_n}{\sqrt{x_n+\xi}}\right)^2\\
&\leq\left(\sum_{n=1}^N a_{m,n}(x_n+\xi)\right)\left(\sum_{n=1}^N\frac{a_{m,n}d_n^2}{x_n+\xi}\right)=[\bA(\bx+\boldsymbol{\xi})]_m\left(\sum_{n=1}^N\frac{a_{m,n}d_n^2}{x_n+\xi}\right).
\end{aligned}
$$
Plugging this into~\cref{eq:hess_cond_F} leads to
$$
\begin{aligned}
\left<\nabla^2 \mathrm{KL}(\by,\bA\bx+\mathbf{b})\mathbf{d},\mathbf{d} \right>&= \sum_{m=1}^M \frac{y_m}{([\bA \bx]_m +b_m)^2} [\bA \mathbf{d}]_m^2\\
&\leq 
\sum_{m=1}^M\frac{y_m[\bA(\bx+\boldsymbol{\xi})]_m}{([\bA \bx]_m +b_m)^2}\sum_{n=1}^N\frac{a_{m,n}d_n^2}{x_n+\xi}\\
&=\sum_{n=1}^N\frac{d_n^2}{x_n+\xi}\sum_{m=1}^M\frac{y_ma_{m,n}[\bA(\bx+\boldsymbol{\xi})]_m}{([\bA \bx]_m +b_m)^2}\\
&=\sum_{n=1}^N\frac{d_n^2}{(x_n+\xi)^2}\sum_{m=1}^M\frac{y_ma_{m,n}(x_n+\xi)[\bA(\bx+\boldsymbol{\xi})]_m}{([\bA \bx]_m +b_m)^2}.
\end{aligned}
$$
By defining $R_m(\bx):=[\bA(\bx+\boldsymbol{\xi})]_m/([\bA \bx]_m +b_m)^2$, $m=1,\ldots, M$, the above can be condensed as
\begin{equation}\label{eq:estimate_F_RLS}
    \left<\nabla^2 \mathrm{KL}(\by,\bA\bx+\mathbf{b})\mathbf{d},\mathbf{d} \right>\leq \sum_{n=1}^N\frac{d_n^2}{(x_n+\xi)^2}\sum_{m=1}^My_ma_{m,n}(x_n+\xi)R_m(\bx).
\end{equation}
Next, for every $m=1,\ldots, M$ consider the function $g_m(t):=(t+[\bA\boldsymbol{\xi}]_m)/(t+b_m)$, $t\geq 0$. We will now find an upper bound on $g_m$ that does not depend on $t$ by computing $\max_{t\geq 0} g_m(t)$. Observe that, if $[\bA\boldsymbol{\xi}]_m\leq b_m$, then $g(z)\leq 1$. On the other hand, if $[\bA\boldsymbol{\xi}]_m> b_m$, then notice that $g_m'(t)=(b_m-[\bA\boldsymbol{\xi}]_m)/(t+b_m)^2<0$, which implies that $g_m$ is decreasing for all $t\geq 0 $ and so $g_m(t)\leq g_m(0)=[\bA\boldsymbol{\xi}]_m/b_m$. Thus, $g_m(t)\leq \max\{1,[\bA\boldsymbol{\xi}]_m/b_m\}=:\omega_m$ for all $m=1,\ldots, M$ and all $t\geq 0$. Consequently, we have that
$$
R_m(\bx)=\frac{g_m([\bA\bx]_m)^2}{[\bA(\bx+\boldsymbol{\xi})]_m}\leq \frac{\omega_m^2}{[\bA(\bx+\boldsymbol{\xi})]_m}
$$
for all $m=1,\ldots, M$. Finally, plugging the above estimate into~\cref{eq:estimate_F_RLS} leads to
$$
\begin{aligned}
\left<\nabla^2 \mathrm{KL}(\by,\bA\bx+\mathbf{b})\mathbf{d},\mathbf{d} \right>
&\leq 
\sum_{n=1}^N\frac{d_n^2}{(x_n+\xi)^2}\sum_{m=1}^M\omega_m^2y_m\frac{a_{m,n}(x_n+\xi)}{[\bA(\bx+\boldsymbol{\xi})]_m}\\
&\leq 
\left(\sum_{m=1}^M\omega_m^2y_m\right)\sum_{n=1}^N\frac{d_n^2}{(x_n+\xi)^2},
\end{aligned}
$$
where in the last inequality we used that $\frac{a_{m,n}(x_n+\xi)}{[\bA(\bx+\boldsymbol{\xi})]_m}\leq 1$ for all $m=1,\ldots,M$, concluding. 

\subsubsection{\texorpdfstring{Proof for $\varphi_\mathrm{B}$}{varphi=Burg}}
For clarity, we point out that the proof of the present result was given in~\cite{bauschke2017descent}[Lemma 7], but for $\mathrm{KL}(\by,\bA\bx)$. By simply noticing that
$$
\begin{aligned}
\langle \nabla^2\mathrm{KL}(\by,\bA\bx+\mathbf{b})\mathbf{d},\mathbf{d} \rangle&=\sum_{m=1}^M \frac{y_m}{([\bA \bx]_m+b_m)^2} [\bA \mathbf{d}]_m^2\\
&\leq \sum_{m=1}^M \frac{y_m}{[\bA \bx]_m^2} [\bA \mathbf{d}]_m^2\\
&= \langle \nabla^2\mathrm{KL}(\by,\bA\bx)\mathbf{d},\mathbf{d} \rangle
\end{aligned}
$$
we get the desired result.

\section{Further details on data generation}

\subsection{Details on the construction of the matrix $\bA$} \label{app: construction of A}

The matrix $\bA$ of Section \ref{sec: data gen} is constructed in the following way:

\begin{enumerate}
    \item We generate $\tilde{\bA}$ sampling its entries $\tilde{a}_{m,n} \sim \cU(0,1)$ and normalizing the resulting matrix $\bar{\bA}$ as $\tilde{\bA} / \|\tilde{\bA}\|_2$.
    \item We compute the SVD decomposition of $\bar{\bA} = \mathbf{U} \bar{\bm{\Sigma}} \mathbf{V}^{T}$. We consider a target condition number $\kappa$  and we rescale $\{\bar{\sigma_i}\}_{i=1}^M$ the singular values of $\bar{\bA}$ with the following formula:
$$
        \sigma_{i} = \bar{\sigma}_{\max} - \left(\bar{\sigma}_{\max} - \frac{\bar{\sigma}_{\max}}{\kappa}\right) \frac{\bar{\sigma}_{\max}- \bar{\sigma}_i}{\bar{\sigma}_{\max}- \bar{\sigma}_{\min}}
$$
    We consider $\bA = \mathbf{U} \bm{\Sigma} \mathbf{V}^{T}$, with $\bm{\Sigma} = \operatorname{diag}(\sigma_1,\dots,\sigma_M)$.
    \item We take $\bA = |\bA|$, i.e. we consider the matrix obtained taking the absolute value of all the entries of $\bA$. 
\end{enumerate}
 Under this generation process, it holds that $\operatorname{rank}(\bA) = \min(M,N)$ with probability 1.\\

Notice that, after the second step above, we actually have that $\operatorname{cond}(\bA) = \kappa$, while after applying the absolute value in step 3, this is not guaranteed anymore. Nevertheless the final condition number obtained is much smaller than the starting one as showed in \Cref{tab:cond_number_evolution}

\begin{table}[h]
\setlength{\extrarowheight}{0.1cm}
\centering
\caption{Evolution of the condition number of the matrixes obtained in the steps to generate $\bA$ from $\bar{\bA}$ with $M=N=500$ and $\kappa=50$.}
\label{tab:cond_number_evolution}
\begin{tabular}{ccc}
\hline
 $\bar{\bA}$ (Step 1) &  $\bA$ (Step 2) & {$|\bA|$} (Step 3) \\ \hline
18531.47                  & 50.00                       & 181.77                                \\ \hline
\end{tabular}
\end{table}

\subsection{\texorpdfstring{Details on the generation of the solution $\bar{\bx} \in \operatorname{int}(\cC)$}{Details on the generation of the solution x }} \label{app: construction of bar x in int C}

The procedure presented in \Cref{sec: data gen} to construct $\bx^*$ and $\by$ when $\bA$ has full rank leads with high probability (see \Cref{rem:implicit_reg}) to a solution $\bar{\bx} \in \partial \cC$. In order to exhibit the behavior of \eqref{eq:BPGM} when the solution $\bar{\bx} \in \mathrm{int}(\cC)$, we need to generate the data in a specific way, which we detail below.

\subsubsection{Unregularized case}
\label{app: generation sol in int C in unreg case}

For the unregularized case, we simply consider $\mathbf{x}^* = \mathbf{1} $ and $\by = \bA \mathbf{x}^* + \mathbf{b}$, i.e., removing the Poisson noise. In this case we have that $\bar{\bx} = \bx^* \in \operatorname{int}(\cC)$.

\subsubsection{Regularized case} 
\label{app: generation sol in int C in reg case}

To enforce the optimal solution $\bar{\mathbf{x}}$ to lie strictly in $\operatorname{int}(\cC)$, we exploit the Karush-Kuhn-Tucker (KKT) optimality conditions.
Consider the regularized minimization problem:
\begin{equation}\label{eq:reg_problem_app}
\bx^* \in \mathrm{arg}\, \min_{\mathbf{x} \ge \mathbf{0}} \; \operatorname{KL}(\mathbf{y}, \mathbf{A}\mathbf{x} + \mathbf{b}) + \lambda \|\mathbf{x}\|_1
\end{equation}
If the optimal solution $\mathbf{x}^*$ is strictly positive, the non-negativity constraints are not active, and the subgradient of the $\ell_1$-norm reduces to the vector of ones $\mathbf{1}$. Therefore, the first-order optimality condition implies that the gradient of \eqref{eq:reg_problem_app} must vanish at $\mathbf{x}^*$:
\begin{equation}\label{eq:kkt_interior}
\mathbf{A}^T \left( \mathbf{1} - \frac{\mathbf{y}}{\mathbf{A}\mathbf{x}^* + \mathbf{b}} \right) + \lambda \mathbf{1} = \mathbf{0}
\end{equation}
where the division between vectors is applied component-wise. Now, the idea is to construct $\by$ such that $\bx^* \in \operatorname{int}(\cC)$ is the unique solution of the problem in \eqref{eq:reg_problem_app}. To do so, we proceed as follows.
\begin{enumerate}
    \item We generate the target ground-truth vector $\mathbf{x}^* \in \operatorname{int}(\cC)$, e.g. by sampling from a uniform distribution.
    \item We compute $\mathbf{w}$ as the solution of the following linear system:
$$
    \mathbf{A}^T \mathbf{w} = \mathbf{A}^T \mathbf{1} + \lambda \mathbf{1}.
$$ 
Then, from \eqref{eq:kkt_interior} we can identify $\mathbf{w}$ as: $w_m = \frac{y_m}{[\mathbf{A}\mathbf{x}^* + \mathbf{b}]_m}$ for all $m =1,\ldots,M$.
    \item We construct  $\mathbf{y}   = (\mathbf{A}\mathbf{x}^* + \mathbf{b}) \odot \mathbf{w}$ (component-wise multiplication).
\end{enumerate}

\end{document}